\documentclass[10pt]{article}
\usepackage{booktabs}
\usepackage{amsthm}
\usepackage{bbm}
\usepackage[title,titletoc]{appendix}
\usepackage{lineno}
\usepackage{graphicx}
\usepackage{amsmath,amssymb,amsthm,amsfonts}
\usepackage{amssymb}
\usepackage{color}
\usepackage{amsfonts}
\usepackage{amssymb}
\usepackage{amsthm}
\usepackage{amsmath}
\usepackage{empheq}
\usepackage{indentfirst}
\usepackage{cite}
\usepackage{mathrsfs}
\usepackage{lineno}
\allowdisplaybreaks[4]
\usepackage{}
\usepackage{graphicx,graphics,subfigure,xcolor}
\usepackage{amsmath,amssymb,amsbsy,amsthm,amsfonts}
\usepackage{subfigure}
\usepackage{amssymb}
\usepackage{mathrsfs}
\usepackage{color}
\usepackage{amsfonts}
\usepackage{amssymb}
\usepackage{amsthm}
\usepackage{amsmath}
\usepackage{empheq}
\usepackage{indentfirst}
\usepackage{cite}
\usepackage{mathrsfs}
\usepackage{extarrows}
\usepackage{array}
%
\allowdisplaybreaks[4]
\newtheorem{thm}{Theorem}[section]
\newtheorem{cor}[thm]{Corollary}
\newtheorem{lem}[thm]{Lemma}\newtheorem{prop}[thm]{Proposition}

\theoremstyle{definition}

\theoremstyle{Remark}
\newtheorem{rem}{Remark}[section]

\numberwithin{equation}{section}


\newcommand{\beq}{\begin{equation}}
\newcommand{\eeq}{\end{equation}}
\DeclareMathOperator{\st}{s.t.}

\newcommand{\lam}{\lambda}
\newcommand{\alp}{\alpha}

\newcommand{\R}{\mathbb{R}}
\newcommand{\N}{\mathbb{N}}

\def\ni{\noindent}
\def\proof{{\ni\bf Proof:\quad}}
\def\proofend{{\hfill$\Box$}\\}
\def\qed{\hfill$\square$\vspace{6pt}}

\newcommand{\bsub}{\begin{subequations}}
\newcommand{\esub}{\end{subequations}$\!$}

\title{Study on the behaviors of rupture solutions for a class of elliptic MEMS equations in $\R^2$}

 \author{Qing Li\thanks{School of Mathematical Sciences, East China Normal University, Shanghai 200241,   P.R. China. Email: \texttt{51185500011@stu.ecnu.edu.cn}. },\,
 Yanyan Zhang\thanks{Corresponding author. School of Mathematical Sciences,  Key Laboratory of MEA(Ministry of Education) and Shanghai Key Laboratory of PMMP,  East China Normal University, Shanghai 200241, China. Email: \texttt{yyzhang@math.ecnu.edu.cn}. Y. Zhang is sponsored by
  NSFC [No.12271505] and
  STCSM  [No.22DZ2229014].}}

\date{\today}


\begin{document}
\maketitle

\begin{abstract}
This study examines    nonnegative solutions to the problem
\begin{equation*}\left\{\arraycolsep=1.5pt
\begin {array}{lll}
\Delta u=\displaystyle\frac{\lambda|x|^{\alpha}}{u^p} \   \ &\hbox{ in} \,\  \R ^2\setminus \{0\},\\[2mm]
u(0)=0 \ \text{and}\ u> 0 \   \ &\hbox{ in} \,\  \R ^2\setminus \{0\},\\
\end{array}\right.
\label{eqn}
\end{equation*}
%
where $\lam >0,$ $\alp>-2$, and $p>0$ are  constants.
The possible  asymptotic behaviors  of $u(x)$ at $|x|=0$ and $|x|=\infty$  are classified according to $(\alpha,p)$. In particular, the results show that for some $(\alpha,p)$, $u(x)$ exhibits only  ``isotropic" behavior at $|x|=0$ and $|x|=\infty$. However, in other cases, $u(x)$ may exhibit the ``anisotropic" behavior at $|x|=0$ or  $|x|=\infty$.
Furthermore, the relation between the limit at $|x|=0$ and the limit at  $|x|=\infty$ for a global solution is investigated.
\end{abstract}
\vskip 0.2truein


Keywords: MEMS;  rupture solution; anisotropic; global solution; convergence

Mathematics Subject Classification (2020): 35B40, 35J75, 74F15,74K35

\vskip 0.2truein

\section{Introduction}


In this paper, we  
consider the following nonlinear problem
\begin{equation}\left\{\arraycolsep=1.5pt
\begin {array}{lll}
\Delta u=\displaystyle\frac{\lambda|x|^{\alpha}}{u^p} \   \ &\hbox{ in} \,\  \R ^2\setminus \{0\},\\[2mm]
u(0)=0 \ \text{and}\ u> 0 \   \ &\hbox{ in} \,\  \R ^2\setminus \{0\},\\
\end{array}\right.
\label{eqn}
\end{equation}
where $\lam >0$, $\alp>-2$, and $p>0$ are constants.

Equations of the form in \eqref{eqn} with a negative exponent arise in several applied problems, such as
 in the study of steady states of thin films of viscous fluids (see, for example, \cite{MR2326181}),
and in the modeling of electrostatic micro-electromechanical systems (MEMS) (refer to \cite{MR1955412} for physical derivations and \cite{MR3662914,MR2604963} for mathematical analysis). In particular, $|x|^{\alpha}$ represents the power-law permittivity profile in MEMS modeling. Consequently, the elliptic equation in \eqref{eqn} has been widely investigated over the past few years (see \cite{MR2161885,MR4213008,MR3265537,MR2500854,MR2393396,MR2498845,MR3436432,MR3023006,MR2604963,MR2286013,MR3426132,MR2179754,MR2564407,MR4375793,MR4631976,MR2351179,MR2262209} and  references therein), while most  studies have focused on positive solutions.

Of special interest are solutions that give rise to singularities in the equation, that is, such that $u=0$ in some regions. These solutions are called rupture solutions because they represent  rupture in the device  in the physical model. The parabolic version related to  \eqref{eqn} was considered in \cite{kawarada1975solutions,MR3867223,MR3945769,MR3305368,MR2179754,MR4389601}. In such a setting, a solution $u$  that takes the value $0$ at some point is called a touchdown solution.

 In this study, for the elliptic equation in \eqref{eqn}, we consider its rupture solutions in the simplest case, where the rupture point is the origin. For $\alpha=0,$ the rupture solutions for equation  \eqref{eqn} in dimension $N$ were investigated in \cite{MR2957549,MR3265537,MR3436432,MR2500854,MR2393396,MR2161885,MR4375793}. In particular, the H$\ddot{o}$lder continuity of rupture solutions and the Hausdorff dimensions of  rupture sets were studied in \cite{MR3436432};  the existence of a single point rupture solution in some $R^N, N\geq3$ was obtained in \cite{MR2161885}; for \eqref{eqn} in $\R^N$ with $N\geq3,$ infinitely many non-radial rupture solutions were constructed in \cite{MR3265537} for some $p$. For general $\alpha$, the radial single point rupture solution near the origin for \eqref{eqn} in dimension $N$ was recently considered  in \cite{MR4375793,MR4631976,MR4213008}.

In fact, the behaviors of the   solutions  near the origin were described in \cite{MR3998250} for problem \eqref{eqn} in a special case where $p=2$. The second author of the present article and her collaborators conducted this study. More precisely, in \cite{MR3998250}, the  ``isotropic" and ``anisotropic" ruptures  at the origin of the solution for \eqref{eqn} were classified with respect to $\alpha$ for $p=2.$

 In the present study,  we extend the conclusions of \cite{MR3998250} from  $p=2$  to general $p>0$ and expand the domain to the entire $\R^2.$ Compared with \cite{MR3998250}, first, instead of only one $\alpha$, this study contains two parameters, $\alpha$ and $p$,   which yield richer results.  Second, we will further study global solutions on  $\R ^2\setminus \{0\}$, which is the main concern of this paper and was not  investigated in \cite{MR3998250}.  In addition, our results show a fundamental difference for the case of $p=3$    compared with the other cases.

Before describing  the outcome of this study precisely,  we  rewrite  equation in \eqref{eqn} as
\beq\label{62}
u_{rr}+\frac{1}{r}u_r+\frac{1}{r^2}u_{\theta\theta}=\frac{\lambda r^{\alpha}}{u^p}, \quad (r, \theta)\in [0,+\infty)\times S^1,
\eeq
where $u(x)=u(r, \theta)$ and $(r, \theta)$ is the polar coordinate. Furthermore, we define $v(t, \theta)$ by
\begin{equation}\label{in:trans}
v(t,\theta ):=r^{-\frac{\alp+2}{p+1}}u(r,\theta), \ \ \mbox{where}\,\ t=\ln r \ \ \mbox{and}\ \ r=|x|.
\end{equation}
Denoting
\beq\label{85}
\beta:=\frac{\alp+2}{p+1},
\eeq
 $v(t,\theta)$ satisfies the following elliptic evolution  problem
\begin{equation}\label{R:trans-1}
  -v_{tt}- 2\beta v_t=v_{\theta\theta}+ \beta^2 v-\displaystyle\frac{\lam}{v^p}, \quad (t,\theta)\in (-\infty,+\infty)\times S^1.
\end{equation}
In association with the stationary problem of \eqref{R:trans-1}, we also define $w(\theta)$ as a solution to
\beq\label{2.6}
w''+\beta^2w-\frac{\lambda}{w^p}=0  \ \ \hbox{on}\,\     S^1,
\eeq
and denote  the set of all solutions  as
\beq\label{2.6S}
\mathfrak{S}=\big\{w>0:\, w \ \mbox{is a solution of}\ \eqref{2.6} \big\}.
\eeq

In this study,  we focus on  the rupture solutions $u$ of \eqref{eqn} satisfying
\beq \label{81}
0<C_1\leq r^{-\frac{\alp+2}{p+1}}u(r,\theta)\leq C_2<\infty,
\eeq
which is equivalent to the solutions
 $v$ of \eqref{R:trans-1}  satisfying
\beq\label{C14}
0<C_1\leq v\leq C_2<\infty,
\eeq
where $C_1$ and $ C_2$ are constants independent of $(t,\theta)$ but  dependent on $v.$ Denote
\beq\label{86}
m_0:=\left(\frac{\lambda}{\beta^2}\right)^{\frac{1}{p+1}}=\left(\frac{\lambda(p+1)^2}{(\alp+2)^2}\right)^{\frac{1}{p+1}}.
\eeq
Clearly,  the trivial solution $v\equiv m_0$ for \eqref{R:trans-1} satisfies the requirement of  \eqref{C14}. Nevertheless, it should be noted that some solutions for equation \eqref{R:trans-1}  may not meet the requirements of equation \eqref{C14}. For instance, following  Guo and Wei   \cite{MR3265537}, we can construct a solution $v=[\frac{\lambda(p+1)^2}{2(1-p)}]^{\frac{1}{p+1}}(\sin\theta)^{\frac{2}{p+1}}$  for \eqref{R:trans-1} with $\alpha=0,0<p<1$, which does not satisfy \eqref{C14}.

We are now in a position to state our first main result concerning  the structure of $\mathfrak{S}$ in terms of $\alpha$ and $p$, which plays a fundamental role in the classification of solution behaviors.

\begin{figure}[ht]

\centering
\includegraphics[scale=0.4]{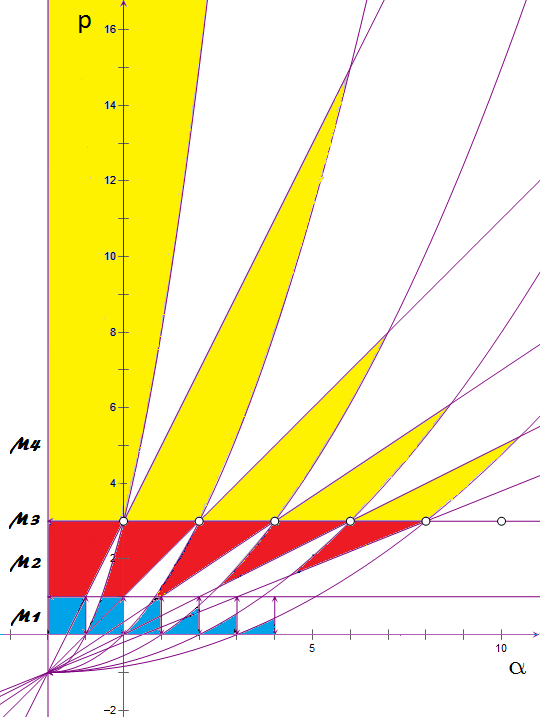}
\caption{Set $\mathscr{M}:=\mathscr{M}_1\cup \mathscr{M}_2\cup \mathscr{M}_3\cup \mathscr{M}_4$ }
\label{fig:1}
\end{figure}

\begin{thm}\label{thm1.1} 

Consider the  set
\begin{equation} \label{thm1.1:00}
\mathfrak{S}:=\Bigg\{w(\theta )\in C^2(S^1):\, w''+\bigg(\frac{\alp+2}{p+1}\bigg)^2w-\frac{\lam}{w^p} =0,\quad w>0 \Bigg\}.
\end{equation}
as defined in \eqref{2.6S}, where $\alpha>-2, ~p>0, ~ \lam  >0$  are given constants. The results are as follows:
\begin{enumerate}
\item[$(1)$] If
\begin{equation} \label{thm1.1:1}
(\alpha,p)\in\mathscr{M}:=\mathscr{M}_1\cup \mathscr{M}_2\cup \mathscr{M}_3\cup \mathscr{M}_4,
\end{equation}
 then
 $\mathfrak{S} =\big\{\big(\frac{\lam}{\beta^2}\big)^{\frac{1}{p+1}}\big\}.$
Here  (refer to Figure \ref{fig:1})
\begin{equation}
  \begin{split}
  \mathscr{M}_1 &:=\bigg\{(\alpha,p)|~\alpha>-2,\,\ 0<p<1\}\cap\{(\alpha,p)|~\alpha\leq -1 \\
  & \ \text{or}\ \exists ~j\in \N, \,\ j\geq 1,\,\ \st\,\ \alpha\leq j-1 \ \text{and}\ p\leq \left(\frac{\alpha+2}{j}\right)^2-1 \bigg\},\\
  \mathscr{M}_2 &:=\bigg\{(\alpha,p)|~\alpha>-2,\,\ 1\leq p<3\}\cap\{(\alpha,p)\big|~p\geq 2\alpha+3 \\
  &\ \text{or}\ \exists ~j\in \N, \,\ j\geq 1,\,\ \st\,\ \frac{2(\alpha+2)}{j+1}-1\leq p\leq \left(\frac{\alpha+2}{j}\right)^2-1\bigg\},\\
  \mathscr{M}_3 &:=\bigg\{(\alpha,3)|~\alpha>-2,\,\ \text{and}\,\ \alp \notin 2\N\bigg\},\\
  \mathscr{M}_4 &:=\bigg\{(\alpha,p)|~\alpha>-2,\,\ p>3\}\cap\{(\alpha,p)\big|~p\geq  (\alpha+2)^2-1 \\
  & \ \text{or}\ \exists ~j\in \N, \,\ j\geq 1,\,\ \st\,\ \left(\frac{\alpha+2}{j+1}\right)^2-1 \leq p\leq \frac{2(\alpha+2)}{j}-1 \bigg\}.\notag
  \end{split}
\end{equation}

\item[$(2)$] If $(\alpha,p)\in\{(\alp,p)|~\alpha>-2,\,\ p>0,p\neq3 \}\backslash{\mathscr{M}}$, then $\mathfrak{S}$ contains  exactly $1+N_0(\alp,p)$ connected components  $\mathfrak{S}_0=\big\{\big(\frac{\lam}{\beta^2}\big)^{\frac{1}{p+1}}\big\}$,  $\mathfrak{S}_1,\cdots ,\mathfrak{S}_i,$ $\cdots ,\mathfrak{S}_{N_0(\alp,p)}$. Here
    \beq \label{thm1.1:2}
\mathfrak{S}_i=\big\{w_{j_i}(\cdot+a);0\leq a<2\pi\big\},\,\ i=1,2,\cdots,N_0(\alp,p),
\eeq
 $w^{j_i}(\theta)$ is the $\frac{2\pi}{{j_i}}-$periodic positive solution of \eqref{2.6},
$j_i=[b_{\alp,p}]+i,$, $1\leq N_0(\alp,p)< +\infty$ denotes the number of integers in
\[
J_{\alpha,p}=
\begin {cases}
\big(\frac{\alpha+2}{\sqrt{p+1}},~\alpha+2\big) \ & 0<p<1,\\
\big(\frac{\alpha+2}{\sqrt{p+1}},~\frac{2(\alpha+2)}{p+1}\big) \ & 1\leq p<3,\\
\big(\frac{2(\alpha+2)}{p+1},~\frac{\alpha+2}{\sqrt{p+1}}\big) \ & p>3.
\end{cases}
\]
 and $b_{\alp,p}$ is the left endpoint of $J_{\alpha,p}$.
\item[$(3)$] If $p=3$ and $\alpha\in 2\N,$ then $\mathfrak{S}$ contains exactly  one connected component. More precisely, it holds $\mathfrak{S}=\{w_{\epsilon,a}(\theta);(\epsilon,a)\in (0,1]\times[0,\pi/\beta)\}$ with $\beta=(\alpha+2)/4$ and
    \beq\label{8}
    w_{\epsilon,a}(\theta)=\big(\frac{\lam}{\beta^2}\big)^{\frac{1}{4}}\left[\epsilon\cos^2(\beta(\theta+a))+\epsilon^{-1}\sin^2(\beta(\theta+a))\right]^{\frac{1}{2}}. 
    \eeq
We point out that the  minimum half-period of $w_{\epsilon,a}(\theta)$ is  $\frac{2}{\alpha+2}\pi$ for all $(\epsilon,a)\in (0,1]\times[0,\pi/\beta)$.
Unlike the case $p\neq3$, this connected component contains the trivial solution $w_{1,a}\equiv\big(\frac{\lam}{\beta^2}\big)^{\frac{1}{4}}$.

\end{enumerate}
\end{thm}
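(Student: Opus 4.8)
The plan is to treat \eqref{2.6} as an autonomous second-order ODE in $\theta$ and to classify its $2\pi$-periodic positive solutions by phase-plane analysis. First I would exploit translation invariance: if $w$ solves \eqref{2.6} then so does $w(\cdot+a)$, so every nonconstant solution occurs in a one-parameter family of phase shifts. Multiplying \eqref{2.6} by $w'$ and integrating gives the first integral
\[
E=\tfrac12(w')^2+V(w),\qquad V(w)=\tfrac{\beta^2}{2}w^2+\tfrac{\lambda}{p-1}w^{1-p}
\]
(with the logarithmic modification at $p=1$). The unique critical point of $V$ on $(0,\infty)$ is $w=m_0$, a nondegenerate minimum corresponding to the constant solution $\mathfrak{S}_0$. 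The shape of $V$ depends on $p$: for $p\ge 1$ it is a well with $V\to+\infty$ at both ends, so every orbit around $m_0$ is periodic with energy $E\in(V(m_0),\infty)$; for $0<p<1$ one has $V(0^+)=0>V(m_0)$, so the periodic orbits fill $E\in(V(m_0),0)$ and the level $E=0$ is a separatrix reaching $w=0$ in finite time. A positive $2\pi$-periodic solution is precisely a closed orbit whose minimal period $\tau$ satisfies $2\pi=j\tau$ for some integer $j\ge1$, so the whole theorem reduces to understanding the period function $T(E)$ and which values $2\pi/j$ it attains.

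Next I would compute the two boundary values of $T$. Linearizing \eqref{2.6} at $m_0$ yields $\phi''+(p+1)\beta^2\phi=0$, so $T(V(m_0)^+)=\frac{2\pi}{\sqrt{p+1}\,\beta}$, i.e. the corresponding winding number is $\sqrt{p+1}\,\beta=\frac{\alpha+2}{\sqrt{p+1}}$, the left endpoint $b_{\alpha,p}$ of $J_{\alpha,p}$. For the other end, when $p\ge1$ and $E\to\infty$ a scaling argument shows the transit time is dominated by the harmonic wall $V\sim\frac{\beta^2}{2}w^2$ while the contribution of the singular wall vanishes, giving $T\to\frac{\pi}{\beta}$, winding number $2\beta=\frac{2(\alpha+2)}{p+1}$; when $0<p<1$ and $E\to0^-$ the separatrix period is a Beta integral, $T\to\frac{2}{\beta}\int_0^1\frac{u^{(p-1)/2}}{\sqrt{1-u^{p+1}}}\,du=\frac{2}{\beta(p+1)}B(\tfrac12,\tfrac12)=\frac{2\pi}{\alpha+2}$, winding number $\alpha+2$. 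These are exactly the two endpoints of $J_{\alpha,p}$ in each of the three regimes.

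The crux, and the step I expect to be hardest, is to prove that $T(E)$ is \emph{strictly monotone} over the full energy range for every $p\neq3$, so that its image is precisely the open interval between the two endpoint periods just computed. I would attempt this via the Abel-transform representation $T(E)=\sqrt2\int_{0}^{E}\Psi'(s)(E-s)^{-1/2}\,ds$, where $\Psi(s)=w_+(s)-w_-(s)$ is the width of the well at level $s$, thereby reducing monotonicity of $T$ to a sign/convexity property of $\Psi'$; a Chicone-type criterion expressed through $V,V',V''$ is an alternative. Period-function monotonicity is generally delicate, and here one must show the sign is governed by $\mathrm{sgn}(p-3)$. Crucially, at $p=3$ the two endpoint periods coincide (since $\sqrt{p+1}=2$ forces $\frac{2\pi}{\sqrt{p+1}\,\beta}=\frac{\pi}{\beta}$), signalling the isochronous case: for $p=3$ the substitution $z=w^2$ linearizes \eqref{2.6} to $z''+4\beta^2 z=4E$, whose solution is the constant $\frac{\lambda^{1/2}}{\beta}$ plus a $\tfrac{\pi}{\beta}$-periodic harmonic term, producing exactly the family $w_{\epsilon,a}$ and showing $T\equiv\frac{\pi}{\beta}$ on all orbits.

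Finally I would assemble the count. For $p\neq3$, strict monotonicity makes $E\mapsto T(E)$ a bijection onto its interval, so each integer $j$ lying strictly inside $J_{\alpha,p}$ comes from a unique energy level, hence a unique orbit, whose phase shifts form one connected component $\mathfrak{S}_i$ (topologically a circle); distinct admissible $j$ give distinct energies and hence distinct components, so together with $\mathfrak{S}_0$ one obtains $1+N_0(\alpha,p)$ components, proving $(2)$. The degenerate region $\mathscr{M}$ is then characterized by ``$J_{\alpha,p}$ contains no integer'': I would check that each defining inequality of $\mathscr{M}_1,\mathscr{M}_2,\mathscr{M}_4$ is exactly equivalent to this emptiness (e.g. $p\ge2\alpha+3$ forces the upper endpoint $\le1$ and places the whole interval in $(0,1]$, while the bracketing conditions confine $J_{\alpha,p}$ strictly between two consecutive integers), which proves $(1)$ for $p\neq3$. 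For $p=3$ all orbits have minimal period $\frac{\pi}{\beta}$, so a nonconstant orbit is $2\pi$-periodic iff $2\beta=\frac{\alpha+2}{2}\in\N$, i.e. $\alpha\in2\N$: if $\alpha\notin2\N$ only $m_0$ survives (this is $\mathscr{M}_3$, completing $(1)$), and if $\alpha\in2\N$ the single family $\{w_{\epsilon,a}\}$, which contains the constant at $\epsilon=1$, is one connected component, proving $(3)$.
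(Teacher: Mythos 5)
Your overall strategy coincides with the paper's: pass to the first integral, study the period function of the resulting planar Hamiltonian system, compute its limits at the center and at the outer boundary of the period annulus, and then count the integers (winding numbers) in $J_{\alpha,p}$. All the computations you actually perform agree with the paper's Lemma \ref{31}: the linearization value $\frac{2\pi}{\sqrt{p+1}\,\beta}$ at the center, the half-harmonic value $\frac{\pi}{\beta}$ as $E\to\infty$ for $p\geq 1$, and the Beta-integral value $\frac{2\pi}{\alpha+2}$ on the separatrix for $0<p<1$. Your treatment of $p=3$ via the substitution $z=w^2$, which converts \eqref{2.6} into a linear equation $z''+4\beta^2 z=\mathrm{const}$ and yields both isochronicity and the explicit family \eqref{8} in one stroke, is in fact more self-contained than the paper's argument, which computes $L(\tau)\equiv\frac{\pi}{2\beta}$ by a change of variables inside the period integral and then refers to the affine curve-shortening literature for the classification of solutions.

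However, there is a genuine gap at the step you yourself flag as the crux: the strict monotonicity of the period function for $p\neq 3$, with sign governed by $\mathrm{sgn}(p-3)$. You propose to ``attempt'' this via an Abel-transform representation of $T$ or a Chicone-type criterion, but you neither carry out either computation nor invoke a result that covers this potential; and the theorem needs monotonicity in an essential way in both parts. Without it, the range of the (continuous) period function could be strictly larger than the interval between its two limiting values, so the nonexistence statement in part $(1)$ would not follow; and a given period $\frac{2\pi}{j}$ could be attained by several distinct orbits, so the exact count of components and the description $\mathfrak{S}_i=\{w_{j_i}(\cdot+a)\}$ in part $(2)$ would not follow either. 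The paper closes exactly this step by citation: monotonicity of $L$ in $\tau$ for $p\neq 3$ is deduced from Corollary 5.6 of Andrews \cite{MR1949167} (with the nonlinearity $Q(w)=-\beta^2w+\lambda w^{-p}$), and the limit at the center from Schaaf \cite{MR0727393}. Verifying the hypotheses of such a monotonicity criterion for this one-parameter family of potentials, and showing that the monotonicity flips precisely at $p=3$, is the nontrivial analytic content that your proposal leaves open. Everything else --- the characterization of $\mathscr{M}$ as ``$J_{\alpha,p}$ contains no integer,'' the endpoint algebra such as $p\geq 2\alpha+3$ being equivalent to the upper endpoint being at most $1$, and the dichotomy at $p=3$ according to whether $\alpha\in 2\N$ --- is correct as outlined.
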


\begin{rem}
The set  $\mathscr{M}$ depends only on $\alpha$ and $p,$ and is not affected by $\lambda.$
\end{rem}
\begin{rem}\label{rem1.3} Because  $\mathscr{M}$ contains  an infinite number of parts, only a finite number  is shown in Figure \ref{fig:1}.
\end{rem}

\begin{rem}\label{rem1.1} For the particular case $\alpha=0$(which is often discussed in the literature, e.g., \cite{MR3436432,MR2604963,MR2500854,MR3265537}), equations \eqref{eqn} and \eqref{2.6} can be rewritten as
\begin{equation}\label{137}\left\{\arraycolsep=1.5pt
\begin {array}{lll}
\Delta u=\displaystyle\frac{\lambda}{u^p} \   \ &\hbox{ in} \,\  \R ^2\setminus \{0\},\\[2mm]
u(0)=0 \ \text{and}\ u> 0 \   \ &\hbox{ in} \,\  \R ^2\setminus \{0\},\\
\end{array}\right.
\end{equation}
and
\beq\label{138}
w''+\left(\frac{2}{p+1}\right)^2w-\frac{\lambda}{w^p}=0  \ \ \hbox{on}\,\     S^1,
\eeq
respectively.  According to Theorem \ref{thm1.1}(1), now $${\mathscr{M}} =\{(0,p)| p\in(0,3)\cup(3,+\infty)\}.$$
Therefore, for all $p\neq 3$, it holds $\mathfrak{S} =\big\{\big(\frac{\lam (p+1)^2}{4}\big)^{\frac{1}{p+1}}\big\},$ that is, equation \eqref{138}  has only trivial solution.
For $p=3$, according to Theorem \ref{thm1.1}(3), \eqref{138} has a family of solutions:
$$w_{\epsilon,a}(\theta)=\sqrt{2}\lambda^{\frac{1}{4}}\left[\epsilon\left(\cos{\frac{\theta+a}{2}}\right)^2+\epsilon^{-1}\left(\sin{\frac{\theta+a}{2}}\right)^2\right]^{\frac{1}{2}},\ (\epsilon,a)\in (0,1]\times[0,2\pi).$$
This implies that there exists a family of non-radial rupture solutions for \eqref{137} $u(r,\theta)=\sqrt{r}w_{\epsilon,a}(\theta),$
which is consistent with the results presented in \cite{MR3265537}.
\end{rem}
\begin{rem}
For the particular case $p=2$ (where the equation in \eqref{eqn} is the standard MEMS equation), according to Theorem \ref{thm1.1},  it holds that $${\mathscr{M}} =\displaystyle\bigg\{(\alpha,2)\bigg| \alpha\in\big(-2,-{1\over2}\big]\cup \displaystyle\bigcup^7_{k=2}\bigg[(k-1)\sqrt 3-2,\frac{3k-4}{2}\bigg]\bigg\},$$ consistent with the result presented in \cite[Theorem 1.2]{MR3998250}.
\end{rem}
\begin{rem}
For a better understanding of Theorem \ref{thm1.1}(2), we present the following examples.  If $p=2$ and $\alpha \in (1,2\sqrt{3}-2)$, then $\big[\frac{\sqrt{3}(2+\alpha)}{3}\big]=1$, $N_0(\alp)=1,$ $j_1=2,$ $\mathfrak{S}_1=\big\{w_2(\cdot+a):\, a\in S^1\big\}$, and $\mathfrak{S}=\mathfrak{S}_0\cup\mathfrak{S}_1=\big\{ \big(\frac{9\lam}{(2+\alp )^2}\big)^{\frac{1}{3}}\big\}\cup\big\{w_2(\cdot+a):\, a\in S^1\big\}$;
if $p=2$ and  $ \alpha \in (\frac{5}{2},3\sqrt{3}-2)$, then
$\big[\frac{\sqrt{3}(2+\alpha)}{3}\big]=2$, $N_0(\alp)=1,$ $j_1=3,$ $\mathfrak{S}_1=\big\{w_3(\cdot+a):\, a\in S^1\big\}$, and $\mathfrak{S}=\mathfrak{S}_0\cup\mathfrak{S}_1=\big\{ \big(\frac{9\lam}{(2+\alp )^2}\big)^{\frac{1}{3}}\big\}\cup\big\{w_3(\cdot+a):\, a\in S^1\big\}$.
\end{rem}
\begin{rem}
Equation \eqref{2.6} also arises from the study of the generalized curve-shortening problem. In particular, for $p=3,$ equation \eqref{2.6} with $\beta=1$ is called the affine curve-shortening problem (see \cite{MR1949167,MR2796243}).
\end{rem}

Next, we investigate the asymptotic behavior of the solution $u$ to \eqref{eqn} at the origin and  infinity.

\begin{thm}\label{thm1.2} If $u(x)=u(r,\theta)$ is a solution of \eqref{eqn} and satisfies condition \eqref{81}, then there exist $w_0, w_{\infty}\in \mathfrak{S}$ such that
\begin{align}
 \label{135} \|r^{-\frac{\alp+2}{p+1}}u(r,\theta)-w_0(\theta)\|_{C^2(S^1)} &\leq C_1(1-lnr)^{-\frac{s_1}{7(1-2s_1)}} \quad \text{as}\ r\rightarrow 0^+,\\
 \label{136} \|r^{-\frac{\alp+2}{p+1}}u(r,\theta)-w_{\infty}(\theta)\|_{C^2(S^1)} &\leq C_2(1+lnr)^{-\frac{s_2}{7(1-2s_2)}} \quad \text{as}\ r\rightarrow +\infty,
\end{align}
for some $s_1, s_2 \in(0,\frac{1}{2})$  depending on $w_0, w_{\infty}$, respectively. Here $C_1, C_2$ are constants.
\end{thm}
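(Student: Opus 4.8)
The plan is to work with the transformed equation \eqref{R:trans-1} for $v(t,\theta)$ on the cylinder $\R\times S^1$ and to recognize it as a \emph{damped second-order gradient system}. Writing
$E(v):=\int_{S^1}\big(\tfrac12 v_\theta^2-\tfrac{\beta^2}{2}v^2+V(v)\big)\,d\theta$ with $V(v)=\tfrac{\lam}{1-p}v^{1-p}$ (and $V(v)=\lam\ln v$ when $p=1$), equation \eqref{R:trans-1} reads $v_{tt}+2\beta v_t=E'(v)$, where $E'(v)=-v_{\theta\theta}-\beta^2 v+\lam v^{-p}$ is the $L^2(S^1)$-gradient of $E$. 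The natural Lyapunov functional is then $\mathcal{E}(t):=\tfrac12\|v_t\|_{L^2}^2-E(v)$, and multiplying the equation by $v_t$ and integrating over $S^1$ gives the dissipation identity $\mathcal{E}'(t)=-2\beta\|v_t(t)\|_{L^2}^2\le 0$. The first step is to convert the two-sided bound \eqref{C14} into uniform regularity: since $C_1\le v\le C_2$, the nonlinearity $\lam v^{-p}$ is smooth and bounded, so interior elliptic (Schauder) estimates on the unit cylinders $[t-1,t+1]\times S^1$ yield bounds on $\|v\|_{C^{2,\gamma}}$ uniform in $t$, and bootstrapping gives uniform $C^k$ (indeed analytic) bounds. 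Consequently $\mathcal{E}$ is bounded and monotone with finite limits $\mathcal{E}(\pm\infty)$, and integrating the dissipation identity over $\R$ gives $\int_{\R}\|v_t\|_{L^2}^2\,dt<\infty$.

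The uniform bounds make each translated family $\{v(\cdot+t_n,\cdot)\}$ precompact in $C^2_{loc}(\R\times S^1)$. For any sequence $t_n\to+\infty$ (resp.\ $-\infty$) the integrability of $\|v_t\|_{L^2}^2$ forces $\|v_t\|_{L^2}\to 0$ along a subsequence, so every limit is $t$-independent and hence, by \eqref{R:trans-1}, a solution of the stationary problem \eqref{2.6}, i.e.\ an element of $\mathfrak S$. Thus the $\omega$-limit sets $\omega_\pm$ at $\pm\infty$ are nonempty, compact, connected subsets of $\mathfrak S$ on which $E$ is constant (by monotonicity of $\mathcal{E}$). The goal is to sharpen this to $\omega_+=\{w_\infty\}$ and $\omega_-=\{w_0\}$ with the stated rates.

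The key tool is the \L ojasiewicz--Simon gradient inequality. Since $v\mapsto E(v)$ is real-analytic on the open set $\{v>0\}\subset H^1(S^1)$ and the linearization $L_w:=E''(w)=-\partial_{\theta\theta}-\beta^2-p\lam w^{-p-1}$ at any $w\in\mathfrak S$ is a self-adjoint, index-zero Fredholm operator from $H^2(S^1)$ to $L^2(S^1)$, the standard \L ojasiewicz--Simon theory provides, for each $w\in\mathfrak S$, constants $C,\sigma>0$ and an exponent $s=s(w)\in(0,\tfrac12]$ such that $|E(v)-E(w)|^{1-s}\le C\|E'(v)\|_{H^{-1}(S^1)}$ whenever $\|v-w\|_{H^1(S^1)}<\sigma$. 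Because $\omega_\pm$ is compact and $E$ is constant on it, a finite cover argument yields uniform constants $C,\sigma,s$ valid in an $H^1$-neighborhood of the whole limit set.

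It remains to convert the \L ojasiewicz inequality, which controls only the stationary gradient, into convergence of the full second-order flow together with a rate; this is where I expect the main difficulty to lie. Because \eqref{R:trans-1} is second order in $t$ rather than a first-order gradient flow, the kinetic term $\tfrac12\|v_t\|_{L^2}^2$ must be absorbed. Following the Haraux--Jendoubi scheme for damped second-order equations, I would introduce an auxiliary functional combining $\mathcal{E}(t)-\mathcal{E}(+\infty)$ with a small multiple of $\langle v_t,\,v-w\rangle_{L^2}$, and use the \L ojasiewicz inequality together with the dissipation identity to derive a differential inequality forcing $\|v_t(t)\|_{L^2}$ to be integrable on $[0,\infty)$ with an algebraic tail. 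Finite length of the trajectory then gives convergence to a single $w_\infty\in\mathfrak S$ and an $H^1$ decay rate of the form $(1+t)^{-s/(1-2s)}$ for $s\in(0,\tfrac12)$ (exponential when $s=\tfrac12$). Upgrading from $H^1$ to the stated $C^2(S^1)$ rate uses the uniform higher-order bounds and interpolation, $\|v(t)-w_\infty\|_{C^2(S^1)}\le C\|v(t)-w_\infty\|_{H^1(S^1)}^{\gamma}$; the accumulated loss from this interpolation and from the auxiliary-functional estimates is what produces the explicit constant $7$ and the exponent $\tfrac{s}{7(1-2s)}$ in \eqref{135}--\eqref{136}. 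The behavior as $r\to0^+$ (that is, $t\to-\infty$) follows from the same argument after the reflection $t\mapsto-t$, which turns the damping into anti-damping but preserves both the a priori bound \eqref{C14} and the integrability $\int\|v_t\|_{L^2}^2\,dt<\infty$, so that monotonicity of $\mathcal{E}$ still drives the trajectory to a single $w_0$. The principal obstacle is precisely this second-order-in-$t$ conversion: designing the auxiliary functional and the interpolation chain so that the kinetic energy is controlled and the sharp $C^2$ exponent emerges, since the indefinite sign of $E$ and the anti-damping at $-\infty$ preclude a direct appeal to first-order gradient-flow theory.
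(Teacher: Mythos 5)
Your proposal is correct and takes essentially the same approach as the paper, which reduces Theorem \ref{thm1.2} via the cylinder transformation to Proposition \ref{B64} and proves that proposition by ``dynamical system theory and the Lojasiewicz--Simon method'' built around the perturbed energy $\mathscr{H}(v)=\frac{1}{2}\int_{S^1}|v_t|^2\,d\theta-(1+2\beta\varepsilon)E(v)+\varepsilon(v_{\theta\theta}+j(v),v_t)$ of \eqref{B41} --- precisely the Haraux--Jendoubi device you describe, with the cross term $\varepsilon\langle -E'(v),v_t\rangle$ used in place of your equivalent variant $\varepsilon\langle v_t,v-w\rangle$. The backward direction $r\to 0^+$ and the detailed estimates are deferred by the paper to the earlier $p=2$ work \cite{MR3998250}, and your account of the dissipation identity, uniform elliptic bounds, Lojasiewicz exponent, and the interpolation loss producing the factor $7$ matches that (largely cited) argument.
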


By combining Theorem \ref{thm1.1}  and Theorem \ref{thm1.2}, the following conclusion is immediately obtained.
\begin{cor}\label{139}
Suppose $u(x)=u(r,\theta)$ is a solution to \eqref{eqn} and satisfies condition \eqref{81}.
\begin{enumerate}

 \item[$(1)$] If 
 $(\alpha,p)\in{\mathscr{M}},$ then
\begin{equation}\label{140}
  \lim _{r\to 0^+}r^{-\frac{2+\alp}{p+1}} u(r,\theta)= \lim _{r\to +\infty}r^{-\frac{2+\alp}{p+1}} u(r,\theta)=  \left(\frac{\lambda(p+1)^2}{(\alp+2)^2}\right)^{\frac{1}{p+1}} \quad \text{in}\ \ C^2(S^1).
\end{equation}

 \item[$(2)$]
      If $(\alpha,p)\in\{(\alp,p)|~\alpha>-2,\,\ p>0,p\neq3 \}\backslash{\mathscr{M}}$, then either \eqref{140} holds
 or there exist $w_{j_i}(\theta+a_0)$ and $w_{j_k}(\theta+a_{\infty}),$$1\leq i,k\leq N_0(\alpha,p),$ $a_0,a_{\infty}\in[0,2\pi)$ such that

\begin{equation}\label{141}
 \lim _{r\to 0^+}r^{-\frac{2+\alp}{p+1}}u(r, \theta)= w_{j_i}(\theta+a_0)\ \text{and}\  \lim _{r\to +\infty}r^{-\frac{2+\alp}{p+1}}u(r, \theta)= w_{j_k}(\theta+a_{\infty})
\end{equation}
 in $C^2(S^1).$

 \item[$(3)$] If $p=3$ and $\alpha\in 2\N,$ then there exist $(\epsilon_0,a_0),(\epsilon_{\infty},a_{\infty})\in (0,1]\times[0,2\pi)$ such that
 \begin{equation}\label{142}
 \lim _{r\to 0^+}r^{-\frac{2+\alp}{p+1}}u(r, \theta)=w_{\epsilon_0,a_0}(\theta)\ \text{and}\  \lim _{r\to 0^+}r^{-\frac{2+\alp}{p+1}}u(r, \theta)=w_{\epsilon_{\infty},a_{\infty}}(\theta)
\end{equation}
 in $C^2(S^1).$
\end{enumerate}
\end{cor}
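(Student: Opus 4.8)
The plan is to read the corollary off directly from the combination of Theorem~\ref{thm1.1} and Theorem~\ref{thm1.2}, since all of the analytic content already resides in those two results. The starting point is Theorem~\ref{thm1.2}: for any solution $u$ of \eqref{eqn} obeying \eqref{81}, the rescaled quantity $r^{-\frac{\alpha+2}{p+1}}u(r,\theta)$ converges in $C^2(S^1)$ to some $w_0\in\mathfrak{S}$ as $r\to 0^+$ and to some $w_\infty\in\mathfrak{S}$ as $r\to+\infty$, with the explicit rates \eqref{135}--\eqref{136} (which in particular guarantee genuine $C^2(S^1)$ limits rather than mere subsequential ones). Thus the only remaining task is to describe which elements of $\mathfrak{S}$ these two limits can be, and that is precisely the classification supplied by Theorem~\ref{thm1.1}. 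The proof therefore amounts to substituting the three descriptions of $\mathfrak{S}$ into the limit statement, handling the endpoints $r\to 0^+$ and $r\to+\infty$ independently.

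For part~$(1)$, when $(\alpha,p)\in\mathscr{M}$ Theorem~\ref{thm1.1}(1) gives that $\mathfrak{S}=\{(\lambda/\beta^2)^{1/(p+1)}\}$ is a singleton, so $w_0$ and $w_\infty$ are both forced to equal this constant; using the identity $(\lambda/\beta^2)^{1/(p+1)}=(\lambda(p+1)^2/(\alpha+2)^2)^{1/(p+1)}$ recorded in \eqref{86} then yields \eqref{140}. For part~$(3)$, when $p=3$ and $\alpha\in 2\N$ Theorem~\ref{thm1.1}(3) gives $\mathfrak{S}=\{w_{\epsilon,a}\}$ with the explicit form \eqref{8}, which is a single connected component; since each limit lies in this component there are parameters $(\epsilon_0,a_0)$ and $(\epsilon_\infty,a_\infty)$ with $\lim_{r\to 0^+}r^{-\frac{\alpha+2}{p+1}}u=w_{\epsilon_0,a_0}$ and $\lim_{r\to+\infty}r^{-\frac{\alpha+2}{p+1}}u=w_{\epsilon_\infty,a_\infty}$, which is \eqref{142}.

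Part~$(2)$ is the only one carrying a genuine case distinction. Here Theorem~\ref{thm1.1}(2) decomposes $\mathfrak{S}$ into the disjoint connected components $\mathfrak{S}_0\cup\mathfrak{S}_1\cup\cdots\cup\mathfrak{S}_{N_0(\alpha,p)}$, where $\mathfrak{S}_0$ is the trivial constant and each $\mathfrak{S}_i$ with $i\ge 1$ consists of all phase shifts $w_{j_i}(\cdot+a)$ as in \eqref{thm1.1:2}. Applying this componentwise, each of $w_0$ and $w_\infty$ either equals the constant $(\lambda/\beta^2)^{1/(p+1)}$ (membership in $\mathfrak{S}_0$) or equals $w_{j_i}(\cdot+a)$ for some $1\le i\le N_0(\alpha,p)$ and some phase $a$ (membership in a nontrivial component). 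When both limits are the constant we recover \eqref{140}; otherwise the nontrivial branch is realized and, reading the two endpoints independently, there exist indices $1\le i,k\le N_0(\alpha,p)$ and phases $a_0,a_\infty\in[0,2\pi)$ giving \eqref{141}.

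The point to be careful about---rather than a true obstacle, since the heavy lifting sits entirely in Theorems~\ref{thm1.1} and \ref{thm1.2}---is that the limits at the origin and at infinity are two \emph{independent} selections from $\mathfrak{S}$: there is no a priori reason the same component governs both ends, so in part~$(2)$ the two alternatives must be understood componentwise (permitting in principle a trivial limit at one end and a nontrivial limit at the other). One should also confirm that the rescaling exponent appearing in \eqref{135}--\eqref{136} is exactly $\beta=\frac{\alpha+2}{p+1}$, so that the limiting profiles genuinely solve the stationary equation \eqref{2.6} and hence belong to $\mathfrak{S}$; this is built into the definition \eqref{in:trans} of $v$ together with the reduction of \eqref{R:trans-1} to \eqref{2.6}, and requires no further argument.
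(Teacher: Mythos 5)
Your proposal is correct and is essentially the paper's own argument: the paper derives this corollary immediately by combining Theorem~\ref{thm1.1} (the classification of $\mathfrak{S}$) with Theorem~\ref{thm1.2} (existence of $C^2(S^1)$ limits $w_0,w_\infty\in\mathfrak{S}$ at the two ends), exactly as you do, with the two limits selected independently from $\mathfrak{S}$. Your additional care about the componentwise reading of part~$(2)$ and the identification $\beta=\frac{\alpha+2}{p+1}$ is sound but not a departure from the paper's route.
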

Based on  previous preparations, we   focus on the global rupture solutions,  that is,   functions  $u\in C^2(\R^2\setminus \{0\})$  satisfying equation \eqref{eqn} and condition \eqref{81}. This is the main concern of this study.
According to  Theorem \ref{thm1.2},  the limit functions
\begin{equation}\label{120}
\begin{split}
  w_0(\theta) &=\lim\limits_{r \to 0}r^{-\frac{\alp+2}{p+1}}u(r,\theta)=\lim\limits_{t \to -\infty}v(t,\theta),\\
  w_{\infty}(\theta) &=\lim\limits_{r \to +\infty}r^{-\frac{\alp+2}{p+1}}u(r,\theta)= \lim\limits_{t \to +\infty}v(t,\theta),
\end{split}
\end{equation}
both belong to $\mathfrak{S},$ which is defined  in \eqref{2.6S}. Here, $v=r^{-\frac{\alp+2}{p+1}}u$ is a solution of  \eqref{R:trans-1}.
Our main concern  is the  relation between $w_0(\theta)$ and $ w_{\infty}(\theta)$. In particular, we want to know
that if  there exist global solutions satisfying \eqref{120}, then what necessary conditions the pair of elements $w_0(\theta), w_{\infty}(\theta)$  must satisfy. In other words, we want to investigate for which pairs  of elements $w_0(\theta), w_{\infty}(\theta)$ there may exist global solutions satisfying \eqref{120},
and for which pairs there is no global solution satisfying \eqref{120}.

Inspired by \cite{MR0993442}, a key observation is derived from an estimate of energy: the energy at the origin is always less than or equal to that at  infinity. More precisely,  the energy functional $E$ is defined as
\begin{equation}\label{E0}
E(\psi)=\int_{S^1}~ \frac{1}{2}(\psi')^2-\frac{1}{2}\beta^2\psi^2-h(\psi)~d\theta,
\end{equation}
where $h$ is defined by \eqref{C15}.  If $w_0(\theta), w_{\infty}(\theta)$  are defined as in \eqref{120}, then we have  (Proposition \ref{3.prop1})
\beq\label{118}
E(w_0)\leq E(w_{\infty}).
\eeq
Here, the equality sign holds if and only if $w_0(\theta)= w_{\infty}(\theta)$  and $u$ is of the form $u(r,\theta)=r^{\frac{\alp+2}{p+1}}w(\theta),$  $w(\theta)\in\mathfrak{S}$. By Theorem \ref{thm1.1}, this implies that if $(\alpha,p)$ satisfies \eqref{thm1.1:1}, then
 $u(r,\theta)=\big(\frac{\lam}{\beta^2}\big)^{\frac{1}{p+1}}r^{\frac{\alp+2}{p+1}}$ is the unique global solution to \eqref{eqn} that satisfies \eqref{81} (Corollary \ref{83}).

 For the special case $p=3$, it holds that $E(w)\equiv C$ on $  \mathfrak{S},$ where $C$ is a constant independent of $w$ (Proposition \ref{110}). Consequently,  $u$ has the form $u(r,\theta)=r^{\frac{\alp+2}{p+1}}w(\theta),$  $w(\theta)\in\mathfrak{S}$ (Corollary \ref{0805}). To be more precise,
there  holds $u(r,\theta)=r^{\frac{\alp+2}{p+1}}\big(\frac{\lam}{\beta^2}\big)^{\frac{1}{p+1}}$ for $\alp \notin 2\N$ and $u(r,\theta)=r^{\frac{\alp+2}{p+1}} w_{\epsilon,a}(\theta)$ for $\alp \in 2\N$, where $w_{\epsilon,a}(\theta)$ is as in \eqref{8}.

For the rest of the cases, it is noted that   the energy $E$ is constant on each connected component of $\mathfrak{S}.$
Furthermore, the analytical results in Section \ref{117}  and the numerical monotonicity results in Section \ref{134} for $E$ indicate

\begin{equation}\label{119}
\begin {array}{lll}
  E(\mathfrak{S}_0)<E(\mathfrak{S}_1)<E(\mathfrak{S}_2)<\cdots <E(\mathfrak{S}_j)<\cdots <E(\mathfrak{S}_{N_0}),\quad \text{for}\ 0<p<3,\\
  E(\mathfrak{S}_1)<E(\mathfrak{S}_2)<\cdots <E(\mathfrak{S}_j)<\cdots <E(\mathfrak{S}_{N_0})<E(\mathfrak{S}_0),\quad \text{for}\ p>3,
  \end{array}
  \end{equation}
where $\mathfrak{S}_k,k=1,2,\ldots,N_0$ are defined  in Theorem \ref{thm1.1}.
From \eqref{118},  inequality \eqref{119} can be interpreted as follows: the asymptotic profile of a global solution at the origin has a lower (or equal) frequency than that at infinity.

In conclusion, we have the following two main theorems:
\begin{thm} \label{121}Let $\mathscr{M},w_{\epsilon,a}(\theta)$ be defined as in Theorem \ref{thm1.1}.
\begin{enumerate}

 \item[$(1)$]If $(\alpha,p)\in\mathscr{M}$, then
 $u(r,\theta)=\big(\frac{\lam}{\beta^2}\big)^{\frac{1}{p+1}}r^{\frac{\alp+2}{p+1}}$ is the unique global solution to \eqref{eqn} that satisfies \eqref{81}.

 \item[$(2)$] If $p=3$ and $\alp \in 2\N$, then $u(r,\theta)$ is a global solution to \eqref{eqn} that satisfies \eqref{81} if and only if
  $u(r,\theta)=r^{\frac{\alp+2}{p+1}} w_{\epsilon,a}(\theta)$ for some $(\epsilon,a)\in (0,1]\times[0,\pi/\beta).$
\end{enumerate}
 \end{thm}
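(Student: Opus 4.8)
The plan is to obtain both parts as essentially immediate consequences of the asymptotic convergence in Theorem~\ref{thm1.2}, the classification of $\mathfrak{S}$ in Theorem~\ref{thm1.1}, and—most importantly—the rigidity contained in the equality case of the energy inequality \eqref{118}. The driving mechanism is that whenever the two limit profiles $w_0,w_\infty$ from \eqref{120} are forced to carry the \emph{same} energy, the equality clause of Proposition~\ref{3.prop1} collapses $u$ into the separated form $u(r,\theta)=r^{(\alp+2)/(p+1)}w(\theta)$ with $w\in\mathfrak{S}$. So for each part I would first argue that $E(w_0)=E(w_\infty)$ is unavoidable, and then read off the admissible $w$ from the structure of $\mathfrak{S}$.

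For part (1), let $u$ be any global solution satisfying \eqref{81}. By Theorem~\ref{thm1.2} its limits $w_0,w_\infty$ both lie in $\mathfrak{S}$, and by Theorem~\ref{thm1.1}(1) the hypothesis $(\alpha,p)\in\mathscr{M}$ forces $\mathfrak{S}=\{(\lam/\beta^2)^{1/(p+1)}\}$. Hence $w_0=w_\infty=(\lam/\beta^2)^{1/(p+1)}$, so trivially $E(w_0)=E(w_\infty)$. The equality clause of Proposition~\ref{3.prop1} then yields $u(r,\theta)=r^{(\alp+2)/(p+1)}w(\theta)$ for some $w\in\mathfrak{S}$, and since $\mathfrak{S}$ is a single point this is exactly $u(r,\theta)=(\lam/\beta^2)^{1/(p+1)}r^{(\alp+2)/(p+1)}$. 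This recovers the assertion (and matches Corollary~\ref{83}).

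For part (2), with $p=3$ and $\alp\in 2\N$, I would prove the two implications separately. For the ``only if'' direction, take a global solution $u$ with limits $w_0,w_\infty\in\mathfrak{S}$; the decisive input is Proposition~\ref{110}, stating that $E\equiv C$ is constant on $\mathfrak{S}$ when $p=3$. Thus $E(w_0)=E(w_\infty)=C$, equality holds in \eqref{118}, and Proposition~\ref{3.prop1} gives $u(r,\theta)=r^{(\alp+2)/(p+1)}w(\theta)$ with $w\in\mathfrak{S}$. By Theorem~\ref{thm1.1}(3), $\mathfrak{S}=\{w_{\epsilon,a}:(\epsilon,a)\in(0,1]\times[0,\pi/\beta)\}$, so $w=w_{\epsilon,a}$ for some admissible pair, as claimed. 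For the ``if'' direction I would verify directly that every $u(r,\theta)=r^{(\alp+2)/(p+1)}w_{\epsilon,a}(\theta)$ is an admissible global solution: setting $v(t,\theta)=w_{\epsilon,a}(\theta)$, which is $t$-independent so that $v_t=v_{tt}=0$, equation \eqref{R:trans-1} reduces to \eqref{2.6}, which $w_{\epsilon,a}$ solves by \eqref{8}. Translating back through \eqref{in:trans} and invoking the stated equivalence between solutions of \eqref{eqn} satisfying \eqref{81} and solutions of \eqref{R:trans-1} satisfying \eqref{C14}, it follows that $u$ solves \eqref{eqn}; since $w_{\epsilon,a}\in C^2(S^1)$ is positive and bounded between two positive constants on the compact $S^1$, condition \eqref{81} holds.

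The genuinely substantial ingredients are all upstream: the convergence of Theorem~\ref{thm1.2}, the rigidity in the equality case of Proposition~\ref{3.prop1}, and—for part (2)—the constancy of the energy over all of $\mathfrak{S}$ (Proposition~\ref{110}). Given these, no further estimate is required. The only points demanding care are confirming that the equality clause of \eqref{118} indeed applies once $w_0=w_\infty$ (part (1)), and, separately, that constancy of $E$ across the whole connected component $\mathfrak{S}$—not merely the inequality $E(w_0)\le E(w_\infty)$—is what upgrades \eqref{118} to an equality in part (2). I expect the $p=3$ forward direction to be the one place where a nontrivial structural fact, rather than a one-line deduction, is invoked.
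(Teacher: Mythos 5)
Your proposal is correct and follows essentially the same route as the paper: the paper derives part (1) from its Corollary \ref{83} (i.e., Theorem \ref{thm1.1}(1) forcing $w_0=w_\infty$ plus the equality clause of Proposition \ref{3.prop1}) and part (2) from its Corollary \ref{0805} (Proposition \ref{110} giving $E\equiv C$ on $\mathfrak{S}$, then the same rigidity, then Theorem \ref{thm1.1}(3)). Your only addition is the explicit verification of the ``if'' direction in part (2), which the paper leaves implicit.
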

\begin{rem}
  The conclusion of Theorem \ref{121} is stronger than Corollary \ref{139}(1)(3).
\end{rem}

Next, for the remaining cases, we provide a necessary condition that $(w_0(\theta),w_{\infty}(\theta)$  must satisfy in order to have a global solution that satisfies $\eqref{120}.$

\begin{thm}\label{thm1.3} Let $(\alpha,p)\notin\mathscr{M},$ $p>0,p\neq3,$ $\mathfrak{S}_k,k=1,2,\ldots,N_0$ be defined as in Theorem \ref{thm1.1}.
Assume that $u(x)=u(r,\theta)$ is a global rupture solution of \eqref{eqn}, satisfying \eqref{81} and \eqref{120} with
$w_0(\theta) \in \mathfrak{S}_m$,$w_{\infty}(\theta) \in \mathfrak{S}_n$. If \eqref{119} holds, then
\begin{enumerate}
  \item[$(1)$] for $1<p<3$, it holds  $0\leq m\leq n$;
  \item[$(2)$]  for $p>3$, either $0< m\leq n$, or $n=0,$ $0\leq m\leq N_0$.
\end{enumerate}
\end{thm}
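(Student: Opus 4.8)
The plan is to derive the conclusion almost entirely from the energy inequality \eqref{118} in Proposition \ref{3.prop1}, combined with the strict energy ordering \eqref{119} and the structure of $\mathfrak{S}$ from Theorem \ref{thm1.1}(2). By hypothesis we have a global rupture solution $u$ with $w_0\in\mathfrak{S}_m$ and $w_\infty\in\mathfrak{S}_n$, so setting $v=r^{-\frac{\alp+2}{p+1}}u$ gives the two limits in \eqref{120}, and Proposition \ref{3.prop1} applies to yield $E(w_0)\leq E(w_\infty)$. Since $E$ is constant on each connected component $\mathfrak{S}_k$, this reads $E(\mathfrak{S}_m)\leq E(\mathfrak{S}_n)$. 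The whole argument is then a translation of this single energy inequality through the linear orderings recorded in \eqref{119}.

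First I would treat the case $1<p<3$ (part (1)). Here \eqref{119} states the strict chain $E(\mathfrak{S}_0)<E(\mathfrak{S}_1)<\cdots<E(\mathfrak{S}_{N_0})$, so the map $k\mapsto E(\mathfrak{S}_k)$ is strictly increasing on $\{0,1,\dots,N_0\}$ and hence injective and order-preserving. Therefore $E(\mathfrak{S}_m)\leq E(\mathfrak{S}_n)$ forces $m\leq n$ directly, giving $0\leq m\leq n$ as claimed (the lower bound $0\leq m$ being automatic since the component indices are nonnegative). This case is essentially immediate once the energy inequality is in hand.

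Next I would handle $p>3$ (part (2)). Now the ordering in \eqref{119} is the different chain $E(\mathfrak{S}_1)<\cdots<E(\mathfrak{S}_{N_0})<E(\mathfrak{S}_0)$, in which the trivial component $\mathfrak{S}_0$ carries the \emph{largest} energy rather than the smallest. I would argue by cases on the value of $n$. If $n\neq 0$, then $\mathfrak{S}_n$ sits in the increasing block $\{1,\dots,N_0\}$; the inequality $E(\mathfrak{S}_m)\leq E(\mathfrak{S}_n)<E(\mathfrak{S}_0)$ rules out $m=0$, so $m\in\{1,\dots,N_0\}$ and restricting the strictly increasing ordering on this block yields $m\leq n$, i.e. $0<m\leq n$. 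If instead $n=0$, then $E(\mathfrak{S}_0)$ is the maximal value, so $E(\mathfrak{S}_m)\leq E(\mathfrak{S}_0)$ holds for every $m\in\{0,1,\dots,N_0\}$ and imposes no further restriction, which is exactly the stated alternative $n=0,\ 0\leq m\leq N_0$. This disjunction reproduces part (2).

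The only genuinely substantive input is the energy inequality \eqref{118}, which I am permitted to assume as Proposition \ref{3.prop1}; the subtlety there is that it requires the limits $w_0,w_\infty$ to exist in $C^2(S^1)$, which is guaranteed by Theorem \ref{thm1.2}, and that $E$ be genuinely constant on each $\mathfrak{S}_k$, which follows because $E$ is invariant under the rotation action $w\mapsto w(\cdot+a)$ that parametrizes each component in \eqref{thm1.1:2}. The ordering \eqref{119} itself is the hard analytic/numerical fact established in Sections \ref{117} and \ref{134}, and the statement of Theorem \ref{thm1.3} is explicitly conditioned on it. Thus the expected main obstacle is \emph{not} in the present proof at all — it is relegated to verifying \eqref{119} — and the proof of Theorem \ref{thm1.3} reduces to the careful but routine case-bookkeeping above, taking care that the two distinct orderings of $\mathfrak{S}_0$ relative to the block $\{1,\dots,N_0\}$ produce the asymmetric conclusions in parts (1) and (2).
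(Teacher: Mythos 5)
Your proposal is correct and follows essentially the same route as the paper: the paper's proof likewise combines the energy inequality $E(w_0)\leq E(w_{\infty})$ from Proposition \ref{3.prop1} with the ordering \eqref{119} (which it re-derives as \eqref{133} from the monotonicity statement \eqref{131} and Table \ref{132}) and then concludes by exactly the order-comparison you describe. If anything, your write-up makes explicit the final case bookkeeping for $p>3$ that the paper compresses into ``Theorem \ref{thm1.3} follows,'' while correctly treating \eqref{119} as a hypothesis, as the theorem statement permits.
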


\begin{rem}
Each of the above theorems and its corollary show that  $p=3$ is  fundamentally different  from the other cases.
\end{rem}

Before concluding this section, we want to stress some new features of our study.

(1) For solutions that satisfy  \eqref{eqn} and \eqref{81}, Figure  \ref{fig:1} completes the classification of all possible rupture behaviors concerning $(\alpha,p)$ at $|x|=0$ and  $+\infty$. Specifically,
 it only admits  the ``isotropic" rupture behavior  in the sense of \eqref{140} for $(\alpha,p)$ in the colored region. Otherwise, it also  allows for the ``anisotropic" rupture behavior  in the sense of \eqref{141} or \eqref{142}.

(2) Our research  reveals a remarkable discovery: when  $p=3$,  it exhibits distinctive characteristics compared with the other cases.  First, it should be noted from Theorem \ref{thm1.1} that  when  $p=3$, $\mathfrak{S}$ remains connected for all $\alpha>-2$, unlike when $0<p\neq3$, $\mathfrak{S}$ is not   connected for most $\alpha>-2$. Second, also by  Theorem \ref{thm1.1}, for  $p=3$, all  nontrivial functions in $\mathfrak{S}$ have the same frequency for all $\alpha>-2$, which is not true for $0<p\neq3$ . Finally, but most importantly, according to Theorem \ref{121}, the global solutions for  $p=3$  behave in a simpler manner than  the other cases.

(3) It is worth noting that elliptic integrals were employed in our study.  In fact, we were unable to determine the asymptotic behavior of $E$ for $p\in(1,3)$ until we established a connection with elliptic integrals.

The remainder of this paper is organized as follows. Section 2 analyzes the structure of set $\mathfrak{S}$ and presents the proof of Theorem \ref{thm1.1}. Then, in Section 3, we examine the asymptotic behaviors of the solution $u$ near the origin and infinity, and provide a proof for Theorem \ref{thm1.2}. The final section, Section 4, focuses on studying the global solutions and presenting proof for Theorems \ref{121} and \ref{thm1.3}.

\section{The structure of set $\mathfrak{S}$}\label{3.1}
In this section, we   examine the set $\mathfrak{S}$   defined in \eqref{thm1.1:00} and apply the  phase-plane method to prove Theorem \ref{thm1.1}. For convenience, we define:
\beq \label{C15}
h(x)=\left\{\arraycolsep=1.5pt
\begin {array}{lll}
\frac{\lambda}{(p-1)x^{p-1}}, \ &\text{for}\ p\neq1,\\
-\lambda\ln x, \ &\text{for}\  p=1.
\end{array}\right.
\eeq
Multiplying \eqref{2.6} by $w'$ and then integrating over $S^1$ yields
\beq \label{11}
(w')^2+{\beta}^2w^2+ 2h(w)=E,
\eeq
where $E$ is a constant.
 Define
\beq\label{lll}
g(w)={\beta}^2w^2+ 2h(w),\quad w>0.
\eeq
Then \eqref{11} can be rewritten as
\[
(w')^2=E-g(w).
\]
Note that $w_0=\big(\frac{\lambda}{{\beta}^2}\big)^{1/(p+1)}$ is the unique critical point of $g$ and
\[ \label{15}
 g(w_0):=E_0 =\left\{\arraycolsep=1.5pt
\begin {array}{lll}
\frac{p+1}{p-1}\lambda^{\frac{2}{p+1}}{\beta}^{\frac{2(p-1)}{p+1}}, \ &\text{for}\ p\neq1\\
\lambda(1-\ln{\lambda\over {\beta^2}}), \ &\text{for}\  p=1
\end{array}\right..
\]
Furthermore, $g(w)$ decreases monotonically on $(0, w_0)$ and increases monotonically on $(w_0, +\infty)$. Consequently, for $p > 0$, \eqref{2.6} has a nontrivial solution if and only if $E> g(w_0)$. In addition, any nontrivial solution of problem \eqref{2.6} has the following two properties: (i) it is periodic; (ii) if $w(\theta)$ is a solution of \eqref{2.6}, then $w(\theta+a)$ is also a solution of \eqref{2.6} for any $a\in\mathbb{R}.$

 Suppose that $w(\theta)$ is a nontrivial positive solution to \eqref{2.6}. Denote $w_1$ (resp. $w_2$) the minimum (resp. maximum) value of $w(\theta).$ Then, $w_1$ and $w_2$ are the two roots of
\[
g(w)=E \quad \text{for}\,\ E> g(w_0).
\]
Therefore, by setting $\tau=\frac{w_2}{w_1}$, we conclude from the above that
\beq\label{9}
\left\{\arraycolsep=1.5pt
\begin {array}{lll}
w_1^{p+1 }=\frac{2\lambda(1-{\tau}^{1-p})}{{\beta}^2(p-1){(\tau^2-1)}}, \ &\text{for}\ p\neq1,\\
w_1^2=\frac{2\lambda\ln\tau}{\beta^2(\tau^2-1)}, \ &\text{for}\  p=1.
\end{array}\right.
\eeq
We point out that by ODE theory, for any given $\tau\in[1,+\infty),$ there exists a unique (up to shift) solution $w(\theta)$ to \eqref{2.6} such that $w_2/w_1=\tau.$

Without loss of generality, we can assume that $\theta=0$ is a minimum point of $w(\theta)$ and $\theta =L>0$ is a maximum point of $w(\theta)$, such that $w'(\theta)>0$ for any $\theta\in(0,L).$ Therefore, $w'(0)=w'(L)=0,$ where $L>0$ is the minimum half-period of $w.$ Note also from \eqref{11} that
\begin{equation}\label{90}
d\theta=\frac{dw}{\sqrt{E-g(w)}},
\end{equation}
which implies that
\beq\label{803}
L=\int_{w_1}^{w_2}\frac{dw}{\sqrt{E-g(w)}}. 
\eeq
By setting
$y=\frac{w}{w_1}$, \eqref{803} can be rewritten as

\begin{equation}
L(\tau)           = \int_1^\tau \frac{dy}{\sqrt{\frac{E}{w_1^2}-{g(w_1y)\over{w_1^2}}}}
           = \left\{\arraycolsep=1.5pt
\begin {array}{lll}\displaystyle
\frac{1}{\beta}\int_1^\tau\frac{dy}{\sqrt{1+\frac{\tau^{p-1}(\tau^2-1)}{\tau^{p-1}-1}-y^2-\frac{\tau^{p-1}(\tau^2-1)}{\tau^{p-1}-1}{1\over{y^{p-1}}}}},\quad &p\neq 1,\\
\displaystyle\frac{1}{\beta}\int_1^\tau\frac{dy}{\sqrt{1-y^2+(\tau^2-1)\frac{\ln y}{\ln \tau}}}, \quad &p=1,
\end{array}\right.
\label{14}
\end{equation}
by \eqref{C15}, \eqref{lll} and \eqref{9}. Next, we analyze the range of $L(\tau)$.

\begin{lem}\label{31}
$L(\tau)$ is continuous on $[1,+\infty)$ and satisfies
\begin{equation}\label{12}
\begin{split}
\lim_{\tau\rightarrow 1}L(\tau)&=\frac{\pi}{\sqrt{p+1}\beta}=\frac{\sqrt{p+1}}{\alpha+2}\pi,\\
\lim_{\tau\rightarrow +\infty}L(\tau)&=\frac{\pi}{\min\{p+1,2\}\beta}=\frac{p+1}{\min\{p+1,2\}(\alpha+2)}\pi.\\
\end{split}
\end{equation}
Moreover, $L(\tau)$ decreases strictly for $p\in(0,3)$, increases strictly for $p\in(3,+\infty)$,  and $L(\tau)\equiv\frac{2\pi}{\alpha+2}$ for $p=3$.
\end{lem}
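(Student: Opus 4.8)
The plan is to treat $L$ as the half-period of the conservative equation \eqref{2.6}, using the first integral \eqref{11} and the representation \eqref{14}. By the scaling $w=w_0\tilde w$, $\theta=\tilde\theta/\beta$, equation \eqref{2.6} becomes $\tilde w''+\tilde w-\tilde w^{-p}=0$, so that $\beta L$ depends only on $p$ and $\tau$; I would carry out the analysis in these normalized variables ($\lambda=\beta=1$, $w_0=1$) and restore $\beta$ at the end. Continuity of $L$ on $(1,+\infty)$ is routine: in \eqref{14} the integrand has only the two integrable square-root singularities at the simple turning points $y=1$ and $y=\tau$, it depends continuously on $\tau$ on compact subsets, and the singular parts are dominated uniformly, so dominated convergence applies; continuity up to $\tau=1$ is exactly the first limit below.

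First I would read off the two limits from \eqref{14}. As $\tau\to1$ the turning points merge; setting $y=1+(\tau-1)s$ and Taylor expanding the radicand in \eqref{14} shows it equals $(p+1)(\tau-1)^2 s(1-s)\,(1+o(1))$, whence $L(\tau)\to\frac1\beta\int_0^1\frac{ds}{\sqrt{(p+1)s(1-s)}}=\frac{\pi}{\sqrt{p+1}\,\beta}$, the linearized half-period at $w_0$. As $\tau\to+\infty$ I would split by scale. For $p>1$ one has $w_1\to0$, $w_2\to+\infty$, $E\to+\infty$, and the rescaling $w=w_2\eta$ makes the radicand $\approx w_2^2(1-\eta^2)$, so the integral concentrates at the large turning point and tends to $\frac1\beta\int_0^1(1-\eta^2)^{-1/2}d\eta=\frac{\pi}{2\beta}$, the small-$w$ end being $o(1)$. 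For $0<p<1$ instead $E\to0$ and $w_2$ tends to the finite positive root $w_2^\infty$ of $g=0$, so $L\to\frac1\beta\int_0^{w_2^\infty}(-g)^{-1/2}dw$, which the substitution $x=(w/w_2^\infty)^{p+1}$ turns into $\frac1{(p+1)\beta}B(\tfrac12,\tfrac12)=\frac{\pi}{(p+1)\beta}$. Both read as $\frac{\pi}{\min\{p+1,2\}\,\beta}$, with $p=1$ the boundary case handled through the logarithmic $h$.

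For the monotonicity I would pass to the harmonic variable $u$ defined by $g(w)-E_0=u^2$, with sign of $u$ equal to sign of $w-w_0$. Then $(w')^2=\mathcal A^2-u^2$ with $\mathcal A=\sqrt{E-E_0}$ increasing in $\tau$, and $\beta L=\int_{-\pi/2}^{\pi/2}\frac{dw}{du}\big|_{u=\mathcal A\sin\psi}\,d\psi$. Writing $w_\pm(u)$ for the two branches and $W(u)=w_+(u)-w_-(u)$ for the width of the well at height $E_0+u^2$, this gives the clean identity $\frac{d}{d\mathcal A}(\beta L)=\int_0^{\pi/2}W''(\mathcal A\sin\psi)\sin\psi\,d\psi$, so $L$ is strictly increasing, decreasing, or constant in $\tau$ exactly when $W$ is strictly convex, concave, or affine in $u$. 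Differentiating the turning-point relations yields $W''(u)=\Theta(w_+)-\Theta(w_-)$ with $\Theta(w)=\frac{2\tilde N(w)}{(g'(w))^3}$ and $\tilde N(w)=(g')^2-2(g-E_0)g''$; here $\tilde N'=-2(g-E_0)g'''>0$ because $g'''<0$, so $\tilde N$ vanishes only at $w_0$ and $\Theta$ is positive, tends to $0$ as $w\to0^+$ and $w\to+\infty$, and is finite at $w_0$, hence non-monotone. The case $p=3$ collapses: the substitution $W=w^2$ shows $g$ is invariant under the involution $w\mapsto w_0^2/w$, so $w_+w_-=w_0^2$ and one computes $W(u)\equiv u$, giving $W''\equiv0$ and $L\equiv\frac{\pi}{2\beta}=\frac{2\pi}{\alpha+2}$.

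The main obstacle is the global sign of $W''$ when $p\ne3$. The local sign is transparent: a Taylor expansion of $W$ at $u=0$ shows that the sign of $W''(u)$ for small $u>0$ coincides with that of the Schwarzian-type combination $\big(5(g''')^2-3g''g''''\big)\big|_{w_0}$, a positive multiple of $(p-3)$; this settles monotonicity for $\tau$ near $1$ and is consistent with the ordering of the two limits above. For general $u$, however, $\tilde N$ (hence $\Theta$) changes sign across $w_0$ and $\Theta$ is non-monotone, so $W''(u)=\Theta(w_+)-\Theta(w_-)$ is a genuine competition between the two energy-paired branches $w_-=\sigma(w_+)$ and cannot be signed from pointwise information in $w$ alone. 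I would attack it through the explicit factorization $\tilde N(w)\propto\frac{p+1}{p-1}\,w^{-2p}P(w)$ with $P(w)=w^{2p}-pw^{p+1}+pw^{p-1}-1$ (a triple zero at $w_0$), comparing $\Theta$ at the involution-paired points and, for the delicate range $1<p<3$, reducing $L(\tau)$ to a complete-elliptic-integral representation in $\tau$ whose derivative can be signed. This last step is where I expect the real analytic work to lie.
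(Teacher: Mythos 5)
Your continuity argument, both limits, and the $p=3$ case are essentially correct. The $\tau\to1$ expansion recovering the linearized half-period $\pi/(\sqrt{p+1}\,\beta)$ is a self-contained substitute for the paper's citation of Schaaf's Lemma 3.2; your $\tau\to\infty$ case split ($p>1$: concentration at the upper turning point giving $\pi/(2\beta)$; $p<1$: $E\to0^-$, $w_1\to0$, Beta-function evaluation giving $\pi/((p+1)\beta)$) matches the paper's computation after the substitution $\xi=(y-1)/(\tau-1)$; and the involution $w\mapsto w_0^2/w$ with $w_+w_-=w_0^2$, forcing the width function $W(u)\equiv u$ for $p=3$, is a clean alternative to the paper's direct integration. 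Your local sign computation is also right: at $w_0$ one gets $5(g''')^2-3g''g''''=8p(p+1)^2(p-3)$, consistent with the claimed monotonicity directions.

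The genuine gap is the ``Moreover'' clause itself: strict monotonicity of $L(\tau)$ on all of $(1,\infty)$ for $p\in(0,3)$ and for $p>3$. Your identity $\frac{d}{d\mathcal{A}}(\beta L)=\int_0^{\pi/2}W''(\mathcal{A}\sin\psi)\sin\psi\,d\psi$ is a correct reduction, but it converts the problem into signing $W''=\Theta(w_+)-\Theta(w_-)$ globally, and — as you concede — this difference of two positive, non-monotone quantities evaluated at energy-paired points cannot be signed by pointwise estimates; your closing sentence explicitly defers ``the real analytic work'' to an unexecuted elliptic-integral/factorization program. That deferred work is not a technical footnote: the whole counting of connected components of $\mathfrak{S}$ in Theorem \ref{thm1.1}(2), and the bijection between admissible frequencies $j$ and values of $\tau$ used throughout Section \ref{134} (Table \ref{132}), requires that each half-period $\pi/j$ in the range of $L$ be attained at exactly one $\tau$, which is precisely global strict monotonicity. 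The paper closes this step by invoking Andrews \cite[Corollary 5.6]{MR1949167}, where strict monotonicity of the period function for exactly this family of equations is proved; your local expansion near $\tau=1$ gives monotonicity only in a neighborhood of the trivial solution and does not rule out non-monotone behavior (hence multiple $\tau$'s with the same period) further out. To complete the proof you must either cite such a result, as the paper does, or actually carry out the global argument you sketch.
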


\proof We denote $Q(w)=-\beta^2w+\frac{\lambda}{w^p}$, and let $w_0=\left(\frac{\lambda}{{\beta}^2}\right)^{\frac{1}{p+1}}$ be  the unique root of $Q(w).$
From \cite[Lemma 3.2]{MR0727393}, we  have
\beq \label{3:49}
L(E)\xlongrightarrow{E\rightarrow E_0 }\frac{\pi}{\sqrt{-Q'(w_0)}}=\frac{\pi}{\sqrt{p+1}\beta}, \ \text{for}\ p\in(0,+\infty).
\eeq
Thus, the first equation of \eqref{12} follows directly from (\ref{3:49}) because $\tau\rightarrow 1$ is equivalent to $E\rightarrow E_0$. By setting $\xi=\frac{y-1}{\tau-1}$, we rewrite \eqref{14} as
\begin{equation}\label{112}
L(\tau) =\frac{1}{\beta}\int_0^1\frac{\tau-1}{\sqrt{1+\frac{\tau^{p-1}(\tau^2-1)}{\tau^{p-1}-1}-(\xi(\tau-1)+1)^2-\frac{\tau^{p-1}(\tau^2-1)}{\tau^{p-1}-1}{1\over{(\xi(\tau-1)+1)^{p-1}}}}}d\xi \end{equation}
for $ p\neq 1,$
and
\[
L(\tau) =\frac{1}{\beta}\int_0^1\frac{\tau-1}{\sqrt{1-(\xi(\tau-1)+1)^2+(\tau^2-1)\frac{\ln{(\xi(\tau-1)+1)}}{\ln \tau}}}d\xi
\]
for $ p=1.$
We then obtain

\begin{equation}
\lim_{\tau\rightarrow +\infty}L(\tau)=
\left\{\arraycolsep=1.5pt
\begin {array}{lll}
\frac{1}{\beta}\int_0^1\frac{d\xi}{\sqrt{1-\xi^2}}=\frac{\pi}{2\beta}, \ &\text{for}\ p\in[1,+\infty),\\
\frac{1}{\beta}\int_0^1\frac{d\xi}{\sqrt{\xi^{1-p}-\xi^2}}=\frac{\pi}{(p+1)\beta},  \ &\text{for}\ p\in (0,1].\\
\end{array}\right.
\end{equation}
This indicates the validity of the second equation in \eqref{12}. For $p=3,$ it can be calculated that
\begin{equation}
\begin{split}
L(\tau)=&\frac{1}{\beta}\int_0^1~\frac{\tau-1}{\sqrt{1+\tau^2-(\xi(\tau-1)+1)^2-\frac{\tau^2}{(\xi(\tau-1)+1)^2}}}~d\xi\\
\xlongequal{x=(\xi(\tau-1)+1)^2}&\frac{1}{\beta}\int_1^{\tau^2}\frac{1}{2}\frac{dx}{\sqrt{(\frac{\tau^2-1}{2})^2-(x-\frac{\tau^2+1}{2})^2}}\\
\equiv&\frac{\pi}{2\beta}.
\end{split}
\end{equation}
Finally,  for $p\neq3$ the monotonicity of $L$ in $\tau$ can be easily deduced from \cite[Corollary 5.6]{MR1949167}. The proof is complete.

\proofend

Based on Lemma \ref{31}, we present the proof of Theorem \ref{thm1.1} as follows.\\

\noindent{\bf Proof of Theorem \ref{thm1.1}.}
We  provide  only proofs  for two specific  cases: $p\in(0,1)$ and $p=3.$ This is because the remaining cases,  $1\leq p<3$ and $p>3$ can be proven in the same manner as in the case $p\in(0,1).$
\begin{itemize}
  \item Case $p\in(0,1)$\\
  According to Lemma \ref{31}, the range of the minimum half-period $L$ is $I:=\left(\frac{\pi}{\alpha+2},\frac{\sqrt{p+1}}{\alpha+2}\pi\right).$ Thus, \eqref{2.6} has no nontrivial solution if and only if the interval $I$ does not contain $\frac{\pi}{j}$ for any integer $j\geq 1,$ which implies that either $\alpha\leq-1$ or
\beq \label{3:51}
\alpha\leq j-1 \quad \text{and}\quad p\leq \left(\frac{\alpha+2}{j}\right)^2-1\quad \text{for some }\ j\geq 1.
\eeq
In other words, for $p\in(0,1),$ \eqref{2.6} has no nontrivial solution if and only if $(\alpha,p)\in\mathscr{M}_1$.
On the other hand, for $ (\alpha,p)\in(-2,+\infty)\times(0,1)\setminus\mathscr{M}_1$, in addition to $\mathfrak{S}_0 =\left\{\left(\frac{\lam(p+1)^2}{(\alpha+2)^2}\right)^{\frac{1}{p+1}}\right\}\subset \mathfrak{S}$, $\mathfrak{S}$ also contains nontrivial solutions since the interval $I$ contains $\frac{\pi}{j}$ for some integer $j\geq 1$. In other words, there exists $j\geq1$ such that $\frac{\pi}{\alpha+2}<\frac{\pi}{j}<\frac{\sqrt{p+1}}{\alpha+2}\pi,$ i.e., $\frac{\alpha+2}{\sqrt{p+1}}<j<\alpha+2.$
If we denote  the number of integers in $(\frac{\alpha+2}{\sqrt{p+1}},\alpha+2)$ by $N_0(\alpha,p),$ then $I$ contains
$\{\frac{\pi}{j_i}|j_i=[\frac{\alpha+2}{\sqrt{p+1}}]+i,i=1,2,\cdots,N_0(\alpha,p)\}.$
 This implies that \eqref{2.6} has $N_0(\alpha,p)$ periodic solutions $w_{j_1}(\theta),w_{j_2}(\theta),\cdots,w_{j_{N_0(\alpha,p)}}(\theta),$ where the period of  $w_{j_i}(\theta)$ is $\frac{2\pi}{j_i}$  and $\min\limits_{\theta} w_{j_i}(\theta)=w_{j_i}(0).$
In conclusion, if $ (\alpha,p)\in(-2,+\infty)\times(0,1)\setminus\mathscr{M}_1$, then $\mathfrak{S}$ contains precisely $1+N_0(\alpha,p)$ connected components  $\mathfrak{S}_0=\left\{\left(\frac{\lam}{\beta^2}\right)^{\frac{1}{p+1}}\right\}$, $\mathfrak{S}_1,\cdots ,\mathfrak{S}_i,$ $\cdots ,\mathfrak{S}_{N_0(\alpha,p)}$, where $\mathfrak{S}_i$ is defined by (\ref{thm1.1:2}).

  \item Case $p=3$

  In this case, $L(\tau)\equiv\frac{2}{\alpha+2}\pi.$ Hence, if ${{\alpha+2}\over2}\notin \mathbb{N}^{+},$ i.e., $\alpha\neq0,2,4,6,\cdots,$ then  $\mathfrak{S} =\left\{\frac{2\lam^{\frac{1}{4}}}{\sqrt{\alpha+2}}\right\}.$ Otherwise,
  similar to \cite{MR2796243}, one can show that all solutions of \eqref{2.6} are given by a 2-parameter family of functions \eqref{8}.

\end{itemize}

The proof is complete.
\qed

\section{The  asymptotic behaviors as $|x|\rightarrow 0$ and $|x|\rightarrow \infty$}

This section focuses on  the  asymptotic behavior of the solution $u(r,\theta)$ as $r\rightarrow0$ and $r\rightarrow\infty$. The goal is to prove Theorem \ref{thm1.2}. Note \eqref{in:trans}, it is reduced to study the behaviors of $v(t,\theta)$ as  $t\rightarrow -\infty$ and $t\rightarrow +\infty$.

Because   inequality \eqref{135} in Theorem \ref{thm1.2} can be obtained in a similar manner as in \cite{MR3998250}, we only provide the proof of \eqref{136}. In other words, we   study only the behavior of $v(t,\theta)$ as  $t\rightarrow +\infty$.
We begin with the following  proposition:

\begin{prop}\label{B64}
Let $v$  be a solution of the evolution equation
\begin{equation}\label{B67}
 -v_{tt}- 2\beta v_t=v_{\theta\theta}+ \beta^2 v-\displaystyle\frac{\lam}{v^p}, \quad (t,\theta)\in \{(t_0,+\infty)\times S^1\}
 \end{equation}
and satisfy \eqref{C14}, where $\beta, \lambda, p$ are   positive constants. Then there exists  $ w_{\infty}\in \mathfrak{S}$ such that
\begin{equation}\label{B66}
\begin{split}
\|v(t,\cdot)-w_{\infty}(\cdot)\|_{C^2(S^1)}\leq &C(1+t)^{-\frac{s_2}{7(1-2s_2)}},\quad \text{as}\ t\rightarrow +\infty,
\end{split}
\end{equation}
where $s_2\in (0,\frac{1}{2})$ is a constant  depending on $w_{\infty}$. Here $C$ is a positive constant.
\end{prop}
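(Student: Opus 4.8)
The plan is to regard \eqref{B67} as an elliptic equation on the half-cylinder $(t_0,+\infty)\times S^1$ whose bounded solutions converge, as $t\to+\infty$, to a critical point of the energy $E$ in \eqref{E0}; the engine is a monotone energy together with a {\L}ojasiewicz--Simon gradient inequality, in the spirit of \cite{MR0993442}. First I would record the a priori regularity: since $C_1\le v\le C_2$ by \eqref{C14} and the nonlinearity $w\mapsto \lam/w^{p}$ is real-analytic on $[C_1,C_2]$, interior Schauder estimates applied on unit strips $(T,T+1)\times S^1$ bound $v$ and all its derivatives uniformly in $T\ge t_0+1$, so every functional below is well defined and the family $\{v(\cdot+T_n,\cdot)\}$ is precompact in $C^2_{\mathrm{loc}}$. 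Next, the monotone energy: multiplying \eqref{B67} by $v_t$ and integrating by parts in $\theta$ over $S^1$ gives $\tfrac{d}{dt}\tilde E(t)=2\beta\int_{S^1}v_t^2\,d\theta\ge 0$, where $\tilde E(t):=E(v(t,\cdot))-\tfrac12\int_{S^1}v_t^2\,d\theta$ and $E'(v)=-v_{\theta\theta}-\beta^2 v+\lam/v^{p}$ equals $v_{tt}+2\beta v_t$ by \eqref{B67}. Hence $\tilde E$ is nondecreasing and bounded, so $\tilde E(t)\nearrow E_\infty$ and $\int_{t_0}^{\infty}\!\int_{S^1}v_t^2\,d\theta\,dt<\infty$. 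Consequently $\|v_t(t_n,\cdot)\|_{L^2}\to 0$ along some $t_n\to+\infty$, and by the compactness a subsequence of $v(t_n,\cdot)$ converges in $C^2(S^1)$; passing to the limit in \eqref{B67} shows the limit solves \eqref{2.6}, so the $\omega$-limit set is a nonempty, compact, connected subset of $\mathfrak S$ on which $E\equiv E_\infty$.

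With a fixed element $w_\infty$ of this $\omega$-limit set, I would invoke the {\L}ojasiewicz--Simon inequality. Because $E$ is real-analytic on a $C^{2,\gamma}$-neighborhood of $w_\infty$ on which functions remain in $[C_1/2,2C_2]$, there exist constants $s_2\in(0,\tfrac12]$, $\sigma>0$ and $C>0$ such that $|E(\psi)-E(w_\infty)|^{1-s_2}\le C\,\|E'(\psi)\|_{L^2(S^1)}$ whenever $\|\psi-w_\infty\|_{C^{2,\gamma}(S^1)}<\sigma$; here $s_2$ is the {\L}ojasiewicz exponent at $w_\infty$, which explains its dependence on $w_\infty$ in the statement.

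The heart of the proof is converting these ingredients into the decay estimate. The genuine difficulty, relative to a first-order gradient flow, is the second-order-in-$t$ structure: the {\L}ojasiewicz inequality controls $E(t)-E_\infty$ through $\|E'(v)\|_{L^2}=\|v_{tt}+2\beta v_t\|_{L^2}$, which involves the acceleration $v_{tt}$ and not merely the dissipation $v_t$. I would therefore supplement the energy identity with a higher-order energy estimate for $v_{tt}$, obtained by differentiating \eqref{B67} in $t$ and using the uniform derivative bounds, to dominate $\|v_{tt}\|_{L^2}$ by the energy flux $\|v_t\|_{L^2}$ on adjacent strips. Feeding this into the {\L}ojasiewicz inequality and into $\tilde E'(t)=2\beta\|v_t\|_{L^2}^2$ yields a closed {\L}ojasiewicz-type differential inequality for $g(t):=E_\infty-\tilde E(t)\ge 0$, whose integration gives algebraic decay of $g$ and, through $\int_t^{\infty}\|v_t(s)\|_{L^2}\,ds$, the rate $(1+t)^{-\frac{s_2}{7(1-2s_2)}}$ for $\|v(t,\cdot)-w_\infty\|_{L^2}$. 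The cumulative losses incurred in passing through $v_{tt}$ and between the $L^2$, $H^1$ and $C^{2,\gamma}$ norms are precisely what turn the classical gradient-flow exponent $\tfrac{s_2}{1-2s_2}$ into $\tfrac{s_2}{7(1-2s_2)}$. A standard trapping argument then guarantees that, once the trajectory enters the $\sigma$-ball around $w_\infty$, the short $L^2$-length $\int_t^{\infty}\|v_t\|_{L^2}\,ds$ prevents escape, so the subsequential limit $w_\infty$ is in fact the full limit.

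Finally I would upgrade the $L^2$ rate to the $C^2(S^1)$ rate asserted in \eqref{B66} by interpolating the $L^2$ decay against the uniform $C^{2,\gamma}$ bounds from the regularity step, at the cost of only a harmless constant. The main obstacle throughout is the third step: extracting a single clean differential inequality from the second-order elliptic dynamics, since $\|E'(v)\|_{L^2}$ cannot be read off directly from the dissipation $\|v_t\|_{L^2}^2$. The core technical work is thus the higher-order estimate bounding $v_{tt}$ and the careful bookkeeping of exponents that produces the factor $7$.
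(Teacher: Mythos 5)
Your overall skeleton --- uniform Schauder bounds on unit strips, the monotone quantity $\tilde E(t)=E(v(t,\cdot))-\tfrac12\int_{S^1}v_t^2\,d\theta$ with $\tilde E'=2\beta\|v_t\|_{L^2}^2$, an $\omega$-limit set contained in $\mathfrak S$, and a Lojasiewicz--Simon inequality at $w_\infty$ --- is the same framework the paper uses (the paper defers all details to \cite{MR3998250}). Where you part ways is the treatment of the second-order-in-$t$ structure. The paper's one explicitly stated modification is the perturbed functional \eqref{B41}, $\mathscr H(v)=\tfrac12\int_{S^1}|v_t|^2\,d\theta-(1+2\beta\varepsilon)E(v)+\varepsilon(v_{\theta\theta}+j(v),v_t)$, whose cross term $\varepsilon(v_{\theta\theta}+j(v),v_t)=-\varepsilon(E'(v),v_t)$ is the standard Haraux--Jendoubi device: differentiating $\mathscr H$ along the flow controls $\|v_t\|_{L^2}^2$ and $\varepsilon\|E'(v)\|_{L^2}^2$ simultaneously, and applying the Lojasiewicz inequality to $\mathscr H$ itself produces an $L^1$-in-time bound on $\|v_t\|_{L^2}$ --- hence trapping, the full limit, and the rate --- in one stroke. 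You replace this by the interior estimate $\|v_{tt}\|_{L^2}\lesssim\|v_t\|_{L^2}$ on adjacent strips (which is correct, via the differentiated equation), in the spirit of Simon's original cylinder argument.

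The genuine gap is the sentence claiming that the difference inequality for $g(t):=E_\infty-\tilde E(t)$, ``whose integration gives algebraic decay of $g$ and, through $\int_t^\infty\|v_t\|_{L^2}\,ds$, the rate.'' Your strip estimate plus Lojasiewicz yields $g(T+1)^{2(1-s_2)}\le C\,(g(T-1)-g(T+2))$, hence $g(t)\le C\,t^{-1/(1-2s_2)}$; but algebraic decay of $g$ does \emph{not} control $\int_t^\infty\|v_t\|_{L^2}\,ds$. By Cauchy--Schwarz on unit strips, $\int_T^{T+1}\|v_t\|_{L^2}\,ds\le C\,(g(T)-g(T+1))^{1/2}\le C\,g(T)^{1/2}\le C\,T^{-\frac{1}{2(1-2s_2)}}$, and the sum over $T$ diverges whenever $s_2\le\tfrac14$ --- a range of Lojasiewicz exponents you cannot exclude; even for $s_2>\tfrac14$ the tail is only $O\big(t^{1-\frac{1}{2(1-2s_2)}}\big)$, which undershoots the exponent $\frac{s_2}{7(1-2s_2)}$ when $s_2$ is close to $\tfrac14$. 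Since your trapping argument and the identification of the full limit also rest on smallness of $\int_t^\infty\|v_t\|_{L^2}\,ds$, this is the load-bearing step, and it is precisely what the paper's cross term is designed to supply. (It can alternatively be repaired along Simon's lines: telescope $g^{s_2}$ rather than $g$, using the dichotomy that either $g(T+1)\ge\tfrac12 g(T-1)$, in which case $(g(T-1)-g(T+2))^{1/2}\lesssim g(T-1)^{s_2}-g(T+2)^{s_2}$, or $g$ halves across the strip and one sums a geometric series --- but some such device must be added.) A secondary imprecision: upgrading the $L^2$ rate to $C^2$ by interpolation against the $C^{2,\gamma}$ bound costs a power of the rate, not ``a harmless constant''; to avoid losing an exponent one should instead apply interior elliptic estimates on unit strips to the linear equation satisfied by $v-w_\infty$.
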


\begin{proof} In contrast to \cite[Proposition 3.5]{MR3998250},  we need to modify the definition of  functional $\mathscr{H}(v)$.
For any $\varepsilon>0$ and $t\geq t_0$, we define

\beq\label{B41}
\mathscr{H}(v)={1\over2}\int_{S^1}|v_t|^2d\theta-(1+2\beta \varepsilon) E(v)+\varepsilon(v_{\theta\theta}+j(v),v_t)
\eeq
with
\[
E(v)=\int_{S^1}\frac{1}{2}(v_{\theta}^2-g(v))~d\theta,\,\ g(v)=\beta^2v^2+2h(v),
\]
\beq\label{70}
j(v)=\beta^2v-{\lambda\over v^p},
\eeq
\[
(v_{\theta\theta}+j(v),v_t)=\int_{S^1}(v_{\theta\theta}+j(v))v_t~d\theta.
\]
Then, Proposition \ref{B64} can be proved in a similar way as in \cite[Proposition 3.5]{MR3998250} by applying dynamical system theory and the Lojasiewicz-Simon method.\qed
\end{proof}
\noindent\textbf{Proof of Theorem \ref{thm1.2}.}
Inequality \eqref{136} follows directly from Proposition \ref{B64} by \eqref{in:trans}, and   inequality \eqref{135} can be obtained in a similar manner as in \cite{MR3998250}.\qed

\section{Global Solutions}
In this section, we study the global rupture solutions, that is,  the functions  $u\in C^2(\R^2\setminus \{0\})$ that satisfy \eqref{eqn} and \eqref{81}. Our goal is to prove Theorem \ref{121} and \ref{thm1.3}.

Recall  the energy $E$ defined by \eqref{E0} and $(w_0(\theta),w_{\infty}(\theta))$ defined by \eqref{120}. First, we prove that the energy $E$  at the origin is always less than or equal to that at infinity. More precisely, we have

\begin{prop}\label{3.prop1}
It holds that $E(w_0)\leq E(w_{\infty})$.
 Furthermore, if $E(w_0)=E(w_{\infty})$, then $ v(t,\theta)\equiv w_0(\theta)=w_{\infty}(\theta)$ for all $t\in \R$, that is, $u(r,\theta)=r^{\frac{\alp+2}{p+1}}w_0(\theta)$ for all $(r,\theta)\in (0,+\infty)\times S^1$.
\end{prop}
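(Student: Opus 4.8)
The plan is to show that the reduced energy
\[
\mathcal{E}(t) := E(v(t,\cdot)) - \frac{1}{2}\int_{S^1} v_t^2 \, d\theta
\]
is nondecreasing in $t$, and then to identify its limits at $t\to\pm\infty$ with $E(w_0)$ and $E(w_\infty)$. First I would differentiate $t\mapsto E(v(t,\cdot))$, integrate by parts in $\theta$ over $S^1$ (no boundary terms, by periodicity), and substitute the evolution equation \eqref{R:trans-1} to replace $v_{\theta\theta}+\beta^2 v - \lambda v^{-p}$ by $-v_{tt}-2\beta v_t$. A direct computation then yields the fundamental identity
\[
\frac{d}{dt}\,\mathcal{E}(t) = 2\beta \int_{S^1} v_t^2 \, d\theta \geq 0,
\]
so $\mathcal{E}$ is monotone nondecreasing. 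This is the analytic heart of the argument and encodes the dissipative effect of the damping term $-2\beta v_t$.

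Next I would pass to the limits $t\to\pm\infty$. By Theorem \ref{thm1.2}, $v(t,\cdot)\to w_0$ in $C^2(S^1)$ as $t\to-\infty$ and $v(t,\cdot)\to w_\infty$ in $C^2(S^1)$ as $t\to+\infty$; since \eqref{C14} keeps $v$ uniformly bounded away from $0$, the functional $E$ is continuous along these limits, whence $E(v(t,\cdot))\to E(w_0)$ and $E(v(t,\cdot))\to E(w_\infty)$ respectively. It then remains to show that the kinetic term $K(t):=\tfrac12\int_{S^1} v_t^2\,d\theta$ tends to $0$ as $t\to\pm\infty$; granting this, $\mathcal{E}(-\infty)=E(w_0)$ and $\mathcal{E}(+\infty)=E(w_\infty)$, and the monotonicity immediately gives $E(w_0)\leq E(w_\infty)$.

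To establish $K(t)\to 0$ I would first record uniform a priori bounds: applying interior elliptic (Schauder) estimates to \eqref{R:trans-1} on the cylinders $(t-1,t+1)\times S^1$ and using $C_1\le v\le C_2$, one obtains a bound on $\|v\|_{C^{2,\gamma}}$ that is uniform in $t$; in particular $v_\theta$, $v_t$ and $v_{tt}$ are uniformly bounded, so $\mathcal{E}(t)$ is bounded and $K'(t)$ is bounded. Integrating the fundamental identity gives $\int_{-\infty}^{+\infty}\int_{S^1} v_t^2\,d\theta\,dt = \frac{1}{2\beta}\big(\mathcal{E}(+\infty)-\mathcal{E}(-\infty)\big) < \infty$, that is $K\in L^1(\mathbb{R})$; combined with the uniform bound on $K'$ this forces $K(t)\to0$ as $t\to\pm\infty$. (Alternatively, decay of $v_t$ is already a byproduct of the Lojasiewicz--Simon analysis underlying Proposition \ref{B64}.)

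Finally, for the equality case, suppose $E(w_0)=E(w_\infty)$. Then $\mathcal{E}(-\infty)=\mathcal{E}(+\infty)$, and since $\mathcal{E}$ is nondecreasing it must be constant; hence $\mathcal{E}'(t)=2\beta\int_{S^1}v_t^2\,d\theta\equiv0$, forcing $v_t\equiv0$. Thus $v(t,\theta)=w_0(\theta)=w_\infty(\theta)$ is independent of $t$ and solves \eqref{2.6}, so $w_0\in\mathfrak{S}$; undoing the change of variables \eqref{in:trans} yields $u(r,\theta)=r^{\frac{\alpha+2}{p+1}}w_0(\theta)$. I expect the main obstacle to be the rigorous justification of $K(t)\to0$ at both ends, i.e. the uniform regularity needed to upgrade the spacetime integrability of $v_t$ to pointwise-in-$t$ decay, rather than the monotonicity identity itself, which is a short computation.
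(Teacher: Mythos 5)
Your proposal is correct and is essentially the paper's own argument: your fundamental identity $\frac{d}{dt}\mathcal{E}(t)=2\beta\int_{S^1}v_t^2\,d\theta$ is precisely the paper's relation \eqref{C13} (obtained by multiplying \eqref{R:trans-1} by $v_t$ and integrating over the cylinder) written in differential form, and both proofs conclude by letting $t\to\pm\infty$, using the decay of $v_t$ and the $C^2(S^1)$ convergence of $v(t,\cdot)$ to $w_0,w_\infty$, with the equality case handled identically ($v_t\equiv0$). The only difference is one of completeness rather than of route: the paper simply cites \cite{MR3998250} for the fact that $v_t(t,\cdot)\to0$ as $t\to\pm\infty$, whereas you supply a self-contained justification (uniform Schauder bounds, $K\in L^1(\mathbb{R})$ with $K'$ bounded, hence $K(t)\to0$), which is a valid way to fill in that step.
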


\proof Multiplying \eqref{R:trans-1} by $v_t$ and integrating  over $[-T,T] \times S^1$ yields
\beq \label{C13}
\int_{-T}^{T}\int_{S^1}~\left(-v_{tt}v_t-2\beta{v_t}^2\right) ~d\theta dt=\int_{-T}^{T}\int_{S^1}~\left(v_{\theta\theta}v_t+\beta^2v_tv -\frac{\lambda}{v^p}v_t\right) ~d\theta dt.
\eeq
Note that similar to \cite{MR3998250},  $v_{t}(t,\cdot)$ tends to $0$ in $C^0(S^1)$ as $t\to \pm\infty$. Let $T\to +\infty$, we obtain \beq
\begin{split}
0\leq&\int_{-\infty}^{+\infty}\int_{S^1}2\beta{v_t}^2 ~d\theta dt\\
=&\int_{S^1}\frac{1}{2}\big((w'_{\infty})^2-(w'_0)^2\big)-\frac{\beta^2}{2}\big(w_{\infty}^2-w_0^2\big)-\big(h(w_{\infty})-h(w_0)\big)~d\theta\\
=&E(w_{\infty})-E(w_0)\notag
\end{split}
\eeq
and $E(w_0)\leq E(w_{\infty})$ follows. Moreover,
if $E(w_0)=E(w_{\infty})$, 
 then $v_t(t,\theta)\equiv 0,$ which implies that $v$ is independent of $t$ and $ v(t,\theta)\equiv w_0(\theta)=w_{\infty}(\theta).$
The proof is complete.
\proofend
\begin{cor}\label{82}
If $\mathfrak{S}$ contains only  the trivial solution  $w(\theta)\equiv \big(\frac{\lam}{\beta^2}\big)^{\frac{1}{p+1}},$ then $v(t,\theta)\equiv \big(\frac{\lam}{\beta^2}\big)^{\frac{1}{p+1}}$ is the unique solution of \eqref{R:trans-1} satisfying \eqref{C14}, which means $u(r,\theta)=\big(\frac{\lam}{\beta^2}\big)^{\frac{1}{p+1}}r^{\frac{\alp+2}{p+1}}$ is the unique global solution to \eqref{eqn} satisfying \eqref{81}.
\end{cor}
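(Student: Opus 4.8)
The plan is to prove Corollary \ref{82} as a direct consequence of Proposition \ref{3.prop1} and Theorem \ref{thm1.2}. First I would invoke Theorem \ref{thm1.2}: any solution $v$ of \eqref{R:trans-1} satisfying \eqref{C14} (equivalently, any rupture solution $u$ satisfying \eqref{81}) has well-defined limits $w_0,w_\infty\in\mathfrak{S}$ as $t\to-\infty$ and $t\to+\infty$ respectively. Since we are assuming $\mathfrak{S}$ contains only the trivial solution $\big(\frac{\lam}{\beta^2}\big)^{\frac{1}{p+1}}$, both limits are forced to equal this constant, so in particular $w_0\equiv w_\infty\equiv\big(\frac{\lam}{\beta^2}\big)^{\frac{1}{p+1}}$.

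Once $w_0=w_\infty$, I would apply the rigidity part of Proposition \ref{3.prop1}. We have $E(w_0)\le E(w_\infty)$ always, but here $w_0$ and $w_\infty$ coincide, so $E(w_0)=E(w_\infty)$ trivially. The equality clause of Proposition \ref{3.prop1} then yields $v_t\equiv 0$, so $v$ is independent of $t$ and $v(t,\theta)\equiv w_0(\theta)=\big(\frac{\lam}{\beta^2}\big)^{\frac{1}{p+1}}$ for all $(t,\theta)$. This establishes that the constant is the unique solution of \eqref{R:trans-1} subject to \eqref{C14}.

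Finally I would translate back through the change of variables \eqref{in:trans}, namely $v(t,\theta)=r^{-\frac{\alp+2}{p+1}}u(r,\theta)$ with $t=\ln r$. Since $v\equiv\big(\frac{\lam}{\beta^2}\big)^{\frac{1}{p+1}}$, we recover $u(r,\theta)=\big(\frac{\lam}{\beta^2}\big)^{\frac{1}{p+1}}r^{\frac{\alp+2}{p+1}}$, which is the unique global rupture solution of \eqref{eqn} satisfying \eqref{81}.

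There is essentially no hard obstacle here; the corollary is a clean packaging of results already in hand. The only point requiring a little care is the logical direction: one must first use Theorem \ref{thm1.2} to guarantee that the limits $w_0,w_\infty$ exist and lie in $\mathfrak{S}$ before the hypothesis on $\mathfrak{S}$ can pin them down, and then invoke the equality case of Proposition \ref{3.prop1} to upgrade ``the two endpoints agree'' to ``$v$ is globally constant in $t$.'' I would be careful to state that the argument applies to any solution satisfying \eqref{C14}, so that uniqueness (not merely existence) is genuinely established.
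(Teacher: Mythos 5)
Your proposal is correct and follows essentially the same route the paper intends: the paper states Corollary \ref{82} without a written proof precisely because it is the immediate combination of Theorem \ref{thm1.2} (existence of limits $w_0,w_\infty\in\mathfrak{S}$) with the equality case of Proposition \ref{3.prop1}, which is exactly the argument you assemble. Your remark about the logical order — first pinning down $w_0=w_\infty$ via the hypothesis on $\mathfrak{S}$, then invoking the rigidity clause to conclude $v_t\equiv 0$ — is the right way to make the implicit argument explicit.
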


\begin{cor}\label{83}
By Corollary \ref{82} and Theorem \ref{thm1.1}, if $(\alpha,p)$ satisfies \eqref{thm1.1:1}, then
 $u(r,\theta)=\big(\frac{\lam}{\beta^2}\big)^{\frac{1}{p+1}}r^{\frac{\alp+2}{p+1}}$ is the unique global solution to \eqref{eqn} satisfying \eqref{81}.
\end{cor}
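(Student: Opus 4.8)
The plan is to obtain this statement purely by chaining two results already established in the excerpt, so the proof is short and mechanical; essentially all the substantive work lives in Theorem \ref{thm1.1} and Corollary \ref{82}.

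First I would verify the hypothesis of Corollary \ref{82}. The assumption that $(\alpha,p)$ satisfies \eqref{thm1.1:1} is precisely the membership $(\alpha,p)\in\mathscr{M}=\mathscr{M}_1\cup\mathscr{M}_2\cup\mathscr{M}_3\cup\mathscr{M}_4$. By Theorem \ref{thm1.1}(1), this membership gives
\[
\mathfrak{S}=\Big\{\big(\tfrac{\lam}{\beta^2}\big)^{\frac{1}{p+1}}\Big\},
\]
so that $\mathfrak{S}$ contains \emph{only} the trivial (constant) solution $w(\theta)\equiv\big(\tfrac{\lam}{\beta^2}\big)^{\frac{1}{p+1}}$. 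This is exactly the standing assumption required to invoke Corollary \ref{82}.

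Second, I would apply Corollary \ref{82} directly. Since $\mathfrak{S}$ reduces to the single trivial element, that corollary asserts that $v(t,\theta)\equiv\big(\tfrac{\lam}{\beta^2}\big)^{\frac{1}{p+1}}$ is the unique solution of \eqref{R:trans-1} satisfying \eqref{C14}, and hence, through the change of variables \eqref{in:trans}, that $u(r,\theta)=\big(\tfrac{\lam}{\beta^2}\big)^{\frac{1}{p+1}}r^{\frac{\alp+2}{p+1}}$ is the unique global solution of \eqref{eqn} satisfying \eqref{81}. Recalling $\beta=\tfrac{\alp+2}{p+1}$ from \eqref{85}, this is the same function written with the explicit constant $\big(\tfrac{\lam(p+1)^2}{(\alp+2)^2}\big)^{\frac{1}{p+1}}$, as in \eqref{86}, which closes the argument.

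There is no genuine obstacle here: the implication is a one-line consequence of the two cited results, and the only point demanding any care is the bookkeeping that \eqref{thm1.1:1} is literally the condition $(\alpha,p)\in\mathscr{M}$ feeding Theorem \ref{thm1.1}(1), together with the identity $\big(\tfrac{\lam}{\beta^2}\big)^{\frac{1}{p+1}}=\big(\tfrac{\lam(p+1)^2}{(\alp+2)^2}\big)^{\frac{1}{p+1}}$ coming from $\beta=\tfrac{\alp+2}{p+1}$. I would simply record these two steps as the full proof.
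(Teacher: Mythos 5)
Your proposal is correct and is exactly the paper's own argument: the corollary is stated in the paper as a direct consequence of Theorem \ref{thm1.1}(1) (which makes $\mathfrak{S}$ the singleton $\{(\lam/\beta^2)^{1/(p+1)}\}$ when $(\alpha,p)\in\mathscr{M}$) combined with Corollary \ref{82}. The bookkeeping you add — identifying \eqref{thm1.1:1} with membership in $\mathscr{M}$ and using $\beta=\frac{\alp+2}{p+1}$ to rewrite the constant — matches the paper's intended reading.
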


Next,  assume that there exists a function $u(x)=u(r,\theta)$ which is  a global  solution to \eqref{eqn}, satisfying \eqref{81} and \eqref{120} with $w_0(\theta),w_{\infty}(\theta) \in \mathfrak{S}.$ Our goal is to obtain the necessary conditions that the pair of elements $w_0(\theta), w_{\infty}(\theta)$ must satisfy. To do this, note that $E(w_0)\leq E(w_{\infty})$,  we will study how  $E(w)$ depends on $w\in\mathfrak{S}.$
Note that the value of $E(w)$ does not change with the shift of $w.$ Recall that $w_1$ (or $w_2$ ) denotes the minimum (or maximum) value of $w(\theta)$,  $\tau=\frac{w_2}{w_1}$ and for any given $\tau\in[1,+\infty)$, there exists a unique (up to shift) solution $w(\theta)$ to \eqref{2.6} such that $w_2/w_1=\tau.$ We aim to prove the monotonicity of $E(w)$ with respect to $\tau$. In fact, by Lemma \ref{31}, if $p\neq3$,  monotonicity with respect to  $\tau$  means  monotonicity with respect to $L$ (the minimum half-period of $w$); if $p=3$, monotonicity with respect to  $\tau$ means  monotonicity with respect to
$\epsilon$ in Theorem \ref{thm1.1}.

Before continuing, let us give another form of $E(w)$ for $w\in \mathfrak{S}$. Introduce
\beq\label{108}
F(w):=\frac{1}{2\pi}\bigg(\frac{\lambda}{\beta^2}\bigg)^{\frac{p-1}{p+1}}\int_{S^1}~\frac{1}{w^{p-1}} ~d\theta,\ p\neq 1, \quad \text{and} \quad F_1(w):=\frac{1}{2\pi}\int_{S^1}~\ln w ~d\theta.
\eeq
If we  multiply \eqref{2.6} by $w$ and integrate  on $S^1,$ then $E(w)$ on $\mathfrak{S}$ can be rewritten as

\begin{equation}\label{109}
E(w)=\left\{\arraycolsep=1.5pt
\begin {array}{lll}
\int_{S^1}-\left(\frac{1}{2}+\frac{1}{p-1}\right)\frac{\lambda}{w^{p-1}}d\theta=\pi\lambda\frac{1+p}{1-p}  \big(\frac{\lambda}{\beta^2}\big)^{\frac{1-p}{p+1}} F(w), \ &\text{for}\ p\neq1,\\
\int_{S^1}(-\frac{\lambda}{2}+\lambda\ln w )d\theta=-\pi\lambda +2\pi\lambda\ln \sqrt{\frac{\lam}{\beta^2}}+2\pi\lambda F_1(w), \ &\text{for}\  p=1.
\end{array}\right.
\end{equation}
The monotonicity of $E(w)$ with respect to $\tau$ is reduced to obtain the monotonicity of $F(w)$ and $F_1(w)$  with respect to $\tau$. For convenience, we sometimes denote  $E(w)$ as  $E(\tau)$, as  discussed previously.
 The same applies to $F(\tau)$ and $F_1(\tau).$

If we rewrite

\begin{equation}
\begin{split}
F(w)=&\bigg(\frac{2\lambda}{\beta^2}\bigg)^{\frac{p-1}{p+1}}\frac{1}{L(\tau)}\int_{w_1}^{w_2}~\frac{dw}{w^{p-1}\sqrt{E-g(w)}}  ,\\
F_1(w)=&\frac{1}{L(\tau)}\int_{w_1}^{w_2}~\frac{\ln w dw}{\sqrt{E-g(w)}} ,
\end{split}
\end{equation}
by \eqref{90},    it should be noted that  both $F$ and $F_1$ depend only on $\tau$ and $p,$ not on $\beta$ and $\lambda.$ In fact,
as in \eqref{14} and \eqref{112}, by setting
$y=\frac{w}{w_1}$ and $\xi=\frac{y-1}{\tau-1}$,  $F(w)$ and $F_1(w)$ can be rewritten   as
\begin{equation}\label{111}
F(\tau)=  \frac{\displaystyle{\int_0^1~\left(\frac{2(1-\tau^{1-p})}{(p-1)(\tau ^2-1)}\right)^{\frac{1-p}{p+1}}
\frac{(\xi(\tau-1)+1)^{1-p}(\tau-1)}{\sqrt{1+\frac{\tau^{p-1}(\tau^2-1)}{\tau^{p-1}-1}-(\xi(\tau-1)+1)^2-\frac{\tau^{p-1}(\tau^2-1)}{\tau^{p-1}-1}{1\over{(\xi(\tau-1)+1)^{p-1}}}}}~d\xi}}
{\displaystyle{\int_0^1\frac{\tau-1}{\sqrt{1+\frac{\tau^{p-1}(\tau^2-1)}{\tau^{p-1}-1}-(\xi(\tau-1)+1)^2-\frac{\tau^{p-1}(\tau^2-1)}{\tau^{p-1}-1}{1\over{(\xi(\tau-1)+1)^{p-1}}}}}~d\xi}}
\end{equation}
and
\begin{equation}\label{113}
F_1(\tau)= \frac{\displaystyle{\int_0^1 ~\ln \left[\left(\frac{2 \ln\tau}{\tau ^2-1}\right)^{\frac{1}{2}}(\xi(\tau-1)+1)\right] \frac{\tau-1}{\sqrt{1-(\xi(\tau-1)+1)^2+(\tau^2-1)\frac{\ln{(\xi(\tau-1)+1)}}{\ln \tau}}}~d\xi}}
{\displaystyle{\int_0^1~\frac{\tau-1}{\sqrt{1-(\xi(\tau-1)+1)^2+(\tau^2-1)\frac{\ln{(\xi(\tau-1)+1)}}{\ln \tau}}}~d\xi}}.
\end{equation}

In the remainder  of this section, for convenience, we denote the numerator in  formula \eqref{111} as
\begin{equation}\label{3.H}
\begin{split}
 H(\tau):=&\int_0^1 ~\left(\frac{2(1-\tau^{1-p})}{(p-1)(\tau ^2-1)}\right)^{\frac{1-p}{p+1}}
\frac{(\xi(\tau-1)+1)^{1-p}(\tau-1)}{\sqrt{1+\frac{\tau^{p-1}(\tau^2-1)}{\tau^{p-1}-1}-(\xi(\tau-1)+1)^2-\frac{\tau^{p-1}(\tau^2-1)}{\tau^{p-1}-1}{1\over{(\xi(\tau-1)+1)^{p-1}}}}}~\mathrm{d}\xi\\
\overset{\triangle}{=}&\int_0^1 G(\xi,\tau,p)\mathrm{d}\xi.
\end{split}
\end{equation}
Then by \eqref{112}, $F(\tau)$ can also be rewritten as
\beq\label{1010}
F(\tau)=\frac{H(\tau)}{\beta L(\tau)}.
\eeq

In the following, the  cases  $p=3,$ $p\in(0,+\infty)\setminus\{1,3\}$ and $p=1$ are considered separately because of their differences. More precisely, in Section \ref{115}, we  prove that   $E(w)$ is a constant on $\mathfrak{S}$ for a particular case $p=3.$
For cases $p\in(0,+\infty)\setminus\{1,3\}$ and $p=1$,
 we   study the asymptotic behavior of  $E(w)$  with respect to $\tau$ in Sections \ref{117} and \ref{127}, respectively.
In Section \ref{134}, we discuss the monotonicity of $E(\tau)$ with respect to $\tau$.
 Finally, we provide the proofs of Theorems \ref{121} and \ref{thm1.3} in Section \ref{128}.

\subsection{The particular case $p=3$}\label{115}
In this subsection, we show that, for the particular case $p=3$, $F(\tau)$ is a constant independent of $\tau$, which implies  by \eqref{109} that $E(w)$ is a constant on $\mathfrak{S}$. More precisely, we have
\begin{prop}\label{110} For  $p=3$,  $F(\tau)\equiv 1$ holds for all $\tau\in [1,+\infty),$  which implies  by \eqref{109} that $E(w)\equiv -\pi\sqrt{\lambda}(\alpha+2)/2$ on $ \mathfrak{S}$.
\end{prop}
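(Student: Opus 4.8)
The plan is to exploit the factorization \eqref{1010}, namely $F(\tau)=H(\tau)/(\beta L(\tau))$, and to evaluate both factors explicitly when $p=3$. By Lemma \ref{31} the minimum half-period is rigid in this case, $L(\tau)\equiv \frac{2\pi}{\alpha+2}$; since $\beta=(\alpha+2)/4$ this gives $\beta L(\tau)\equiv \pi/2$ for every $\tau$. Consequently it suffices to prove that the numerator satisfies $H(\tau)\equiv \pi/2$, after which $F(\tau)\equiv 1$ is immediate.

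To compute $H(\tau)$ I would specialize the definition \eqref{3.H} to $p=3$. Here $\tfrac{1-p}{p+1}=-\tfrac12$ and $\tau^{p-1}=\tau^2$, and a short algebraic simplification collapses the leading factor $\big(\frac{2(1-\tau^{1-p})}{(p-1)(\tau^2-1)}\big)^{(1-p)/(p+1)}$ to exactly $\tau$, while $\frac{\tau^{p-1}(\tau^2-1)}{\tau^{p-1}-1}$ reduces to $\tau^2$. Setting $s=\xi(\tau-1)+1$ (so $s$ runs from $1$ to $\tau$), the radicand becomes $1+\tau^2-s^2-\tau^2/s^2$, which factors as $\frac{(s^2-1)(\tau^2-s^2)}{s^2}$. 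After these reductions one is left with the elementary integral $H(\tau)=\tau\int_1^\tau \frac{ds}{s\sqrt{(s^2-1)(\tau^2-s^2)}}$.

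The decisive step is the evaluation of this last integral, since this is where the special role of $p=3$ manifests. Substituting $x=s^2$ recasts it as $\frac{\tau}{2}\int_1^{\tau^2}\frac{dx}{x\sqrt{(x-1)(\tau^2-x)}}$, to which I would apply the classical identity $\int_a^b \frac{dx}{x\sqrt{(x-a)(b-x)}}=\frac{\pi}{\sqrt{ab}}$ for $0<a<b$ (equivalently, reduce via $t=1/s^2$ to $\int \frac{dt}{\sqrt{(b-t)(t-a)}}=\pi$). With $a=1$, $b=\tau^2$ the integral equals $\pi/\tau$, whence $H(\tau)=\frac{\tau}{2}\cdot\frac{\pi}{\tau}=\frac{\pi}{2}$ for all $\tau\in(1,+\infty)$; the value at $\tau=1$ follows by continuity (or by a direct check on the constant solution), so $F(\tau)\equiv 1$ on $[1,+\infty)$.

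Finally I would feed $F\equiv 1$ into \eqref{109}. For $p=3$ the prefactor $\pi\lambda\frac{1+p}{1-p}\big(\frac{\lambda}{\beta^2}\big)^{(1-p)/(p+1)}$ equals $-2\pi\lambda\big(\frac{\lambda}{\beta^2}\big)^{-1/2}=-2\pi\sqrt{\lambda}\,\beta=-\pi\sqrt{\lambda}(\alpha+2)/2$, which yields the asserted constant value of $E$ on $\mathfrak{S}$. The one genuinely delicate point is the cancellation making $H(\tau)$ independent of $\tau$: for general $p$ the leading factor is not a pure power of $\tau$ and the radicand does not factor over the rationals, so this rigidity is peculiar to $p=3$, and it is precisely this that forces $E$ to be constant on the whole of $\mathfrak{S}$.
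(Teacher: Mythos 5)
Your proposal is correct and follows essentially the same route as the paper: both reduce $F(\tau)=H(\tau)/(\beta L(\tau))$ with $\beta L(\tau)\equiv \pi/2$, perform the same algebraic collapse of the prefactor to $\tau$ and of the radicand to $(s^2-1)(\tau^2-s^2)/s^2$, and arrive via $x=s^2$ at $H(\tau)=\frac{\tau}{2}\int_1^{\tau^2}\frac{dx}{x\sqrt{(x-1)(\tau^2-x)}}$. The only difference is cosmetic: you finish by quoting the classical identity $\int_a^b\frac{dx}{x\sqrt{(x-a)(b-x)}}=\pi/\sqrt{ab}$, whereas the paper evaluates the same integral by hand through a sine substitution followed by $Y=\tan(X/2)$, ending in an arctangent.
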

\proof{
Using \eqref{111} and \eqref{112}, and recalling that $L(\tau)\equiv\frac{\pi}{2\beta}$ for $p=3,$
by \eqref{1010} we rewrite
$F(\tau)=\frac{H(\tau)}{\beta L(\tau)}=\frac{2}{\pi}H(\tau).$ Therefore, it is reduced to prove $H(\tau)\equiv\pi/2.$ In fact,
\begin{equation}
\begin{split}
H(\tau)=&\int_0^1~\left(\frac{2(1-\tau ^{-2})}{2(\tau ^2-1)}\right)^{-\frac{1}{2}}\frac{(\xi(\tau-1)+1)^{-2}(\tau-1)}{\sqrt{1+\tau^2-(\xi(\tau-1)+1)^2-\frac{\tau^2}{(\xi(\tau-1)+1)^2}}}~d\xi\\
=&\tau\int_0^1~\frac{(\tau-1)(\xi(\tau-1)+1)^{-1}}{\sqrt{(1+\tau^2)(\xi(\tau-1)+1)^2-(\xi(\tau-1)+1)^4-\tau^2}}~d\xi\\
\xlongequal{x=(\xi(\tau-1)+1)^2}&\frac{\tau}{2}\int_1^{\tau^2}\frac{dx}{x\sqrt{(\frac{\tau^2-1}{2})^2-(x-\frac{\tau^2+1}{2})^2}}\\
\xlongequal{x-\frac{\tau^2+1}{2}=\frac{\tau^2-1}{2}\sin X}&\frac{\tau}{2}\int_{-\frac{\pi}{2}}^{\frac{\pi}{2}}\frac{dX}{\frac{\tau^2+1}{2}+\frac{\tau^2-1}{2}\sin X}\\
\xlongequal{Y=\tan\frac{X}{2}}&\frac{\tau^2+1}{2\tau}\int_{-1}^{1}\frac{dY}{1+
\left({\frac{\tau^2+1}{2\tau}}(Y+\frac{\tau^2-1}{\tau^2+1})\right)^2}\\
=&[\arctan{\tau}-\arctan(-\frac{1}{\tau})]\\
\equiv &\pi/2.
\end{split}
\end{equation}
So the proof  is complete.
 \proofend
}

If we combine Proposition \ref{3.prop1}, Proposition \ref{110} and Theorem \ref{thm1.1}, we obtain
\begin{cor}\label{0805}
For $p=3,$  $u(r,\theta)$ is a  global solution to \eqref{eqn} satisfying \eqref{81} if and only if  there exists $w_{\epsilon,a}(\theta)\in \mathfrak{S}$ such that $u(r,\theta)=w_{\epsilon,a}(\theta)r^{\frac{\alp+2}{p+1}}.$ In particular, for $p=3$ and $\alp \notin 2\N,$  $u(r,\theta)=\big(\frac{\lam}{\beta^2}\big)^{\frac{1}{p+1}}r^{\frac{\alp+2}{p+1}}$ is the unique global solution to \eqref{eqn} satisfying \eqref{81}.
\end{cor}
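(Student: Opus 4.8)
The plan is to deduce this corollary as a synthesis of the three ingredients already in hand: the energy inequality of Proposition \ref{3.prop1}, the global constancy of the energy for $p=3$ in Proposition \ref{110}, and the structure of $\mathfrak{S}$ from Theorem \ref{thm1.1}(3). No new estimate is needed; the content lies in chaining these together. For the ``only if'' direction I would suppose $u$ is a global rupture solution satisfying \eqref{81}. By Theorem \ref{thm1.2} the limits $w_0,w_{\infty}$ of \eqref{120} exist and lie in $\mathfrak{S}$. Since $p=3$, Proposition \ref{110} gives $E(w)\equiv -\pi\sqrt{\lambda}(\alpha+2)/2$ for every $w\in\mathfrak{S}$, so in particular $E(w_0)=E(w_{\infty})$ no matter which elements of $\mathfrak{S}$ they happen to be. This is exactly the equality case of Proposition \ref{3.prop1}, whose proof forces $v_t\equiv 0$ and hence $v(t,\theta)\equiv w_0(\theta)=w_{\infty}(\theta)$; undoing \eqref{in:trans} yields $u(r,\theta)=r^{\frac{\alp+2}{p+1}}w_0(\theta)$ with $w_0\in\mathfrak{S}$. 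When $\alp\in 2\N$, Theorem \ref{thm1.1}(3) identifies every element of $\mathfrak{S}$ with some $w_{\epsilon,a}$ as in \eqref{8}, giving $u(r,\theta)=w_{\epsilon,a}(\theta)r^{\frac{\alp+2}{p+1}}$.

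For the ``if'' direction I would argue by direct verification. Given $w_{\epsilon,a}\in\mathfrak{S}$, set $v:=w_{\epsilon,a}(\theta)$, which is independent of $t$. Because $w_{\epsilon,a}$ solves the stationary equation \eqref{2.6}, the $t$-independent $v$ automatically solves the evolution equation \eqref{R:trans-1}, and since $w_{\epsilon,a}$ is smooth and bounded between two positive constants it satisfies \eqref{C14}. Reversing the change of variables \eqref{in:trans} then shows that $u(r,\theta)=w_{\epsilon,a}(\theta)r^{\frac{\alp+2}{p+1}}$ belongs to $C^2(\R^2\setminus\{0\})$, solves \eqref{eqn}, and satisfies \eqref{81}, which closes the equivalence.

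The ``in particular'' statement follows at once: when $\alp\notin 2\N$, Theorem \ref{thm1.1}(1) places $(\alpha,3)\in\mathscr{M}_3$, so $\mathfrak{S}$ consists solely of the trivial element $\big(\frac{\lam}{\beta^2}\big)^{\frac{1}{p+1}}$, and the equivalence just proved leaves $u(r,\theta)=\big(\frac{\lam}{\beta^2}\big)^{\frac{1}{p+1}}r^{\frac{\alp+2}{p+1}}$ as the unique admissible global solution. The one conceptual point worth emphasizing — and the reason the $p=3$ case is so rigid — is that Proposition \ref{110} yields constancy of $E$ across \emph{all} of $\mathfrak{S}$, not merely on each connected component; this is precisely what upgrades the inequality $E(w_0)\le E(w_{\infty})$ to an equality for every admissible pair $(w_0,w_{\infty})$ and thereby collapses every global solution onto the self-similar profile. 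I expect no genuine obstacle in the computation; the only step requiring care is invoking the correct (strong) form of the equality case in Proposition \ref{3.prop1}, namely that $E(w_0)=E(w_{\infty})$ gives $v\equiv w_0$ and not merely $w_0=w_{\infty}$.
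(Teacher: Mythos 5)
Your proposal is correct and follows essentially the same route as the paper, which proves Corollary \ref{0805} precisely by combining Proposition \ref{3.prop1} (the energy inequality with its equality case forcing $v_t\equiv 0$), Proposition \ref{110} (constancy of $E$ on $\mathfrak{S}$ for $p=3$), and Theorem \ref{thm1.1} (the structure of $\mathfrak{S}$). Your write-up merely makes explicit the routine verification of the ``if'' direction and the invocation of $\mathscr{M}_3$ for $\alpha\notin 2\N$, which the paper leaves implicit.
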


\subsection{The asymptotic behavior of $E(\tau)$ for $p\in(0,+\infty)\setminus\{1,3\}$}\label{117}
Unlike the case $p=3,$ for cases $p\neq 3$, it is challenging to confirm whether $E(\tau)$ is monotonous with respect to $\tau$, even though we believe it is.
To observe   monotonicity,  we first study the asymptotic behavior of $E(\tau)$ as $\tau \to 1$ and $\tau \to +\infty.$ Using \eqref{109}, this is reduced to the study of   asymptotic behavior of $F(\tau)$ for $p\neq1$ and $F_1(\tau)$ for $p=1.$ In this subsection, we   focus on the asymptotic behavior of $F(\tau)$, and the asymptotic behavior of $F_1(\tau)$ is considered in subsection \ref{127}.

\subsubsection{The asymptotic behavior of $E(\tau)$ as $\tau \to 1$}

\begin{prop}\label{prop3.2} It holds $\lim_{\tau\to 1}F(\tau)=1$ for all $p>0$ and $p\neq1,$ which implies $\lim_{\tau\to 1}E(\tau)=\pi\lambda\frac{1+p}{1-p}  \big(\frac{\lambda}{\beta^2}\big)^{\frac{1-p}{p+1}}$ by \eqref{109}.
\end{prop}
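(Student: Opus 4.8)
The plan is to exploit the fact, visible from \eqref{1010}, that $F(\tau)=H(\tau)/(\beta L(\tau))$ is a \emph{quotient} of two integrals over $[0,1]$ that share exactly the same endpoint-singular denominator. Writing $s=\xi(\tau-1)+1\in[1,\tau]$ and comparing the integrand $G(\xi,\tau,p)$ of $H(\tau)$ in \eqref{3.H} with the integrand of $\beta L(\tau)$ in \eqref{112}, the only differences are an overall $\xi$-independent prefactor $\Phi(\tau):=\big(\frac{2(1-\tau^{1-p})}{(p-1)(\tau^2-1)}\big)^{\frac{1-p}{p+1}}$ and the extra factor $s^{1-p}=(\xi(\tau-1)+1)^{1-p}$ inside the integral. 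I would therefore factor
$$F(\tau)=\Phi(\tau)\cdot\frac{\int_0^1 (\xi(\tau-1)+1)^{1-p}\,\frac{\tau-1}{\sqrt{D}}\,d\xi}{\int_0^1 \frac{\tau-1}{\sqrt{D}}\,d\xi}=:\Phi(\tau)\,R(\tau),$$
where $\sqrt{D}$ is the common square-root denominator appearing in both \eqref{3.H} and \eqref{112}. The task then reduces to showing $\Phi(\tau)\to1$ and $R(\tau)\to1$ as $\tau\to1$.

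For the prefactor, a single application of L'Hôpital's rule in $\tau$ gives $\lim_{\tau\to1}\frac{2(1-\tau^{1-p})}{(p-1)(\tau^2-1)}=\lim_{\tau\to1}\tau^{-p-1}=1$, so $\Phi(\tau)\to1^{(1-p)/(p+1)}=1$. For $R(\tau)$ I would \emph{not} pass to the limit inside the singular integrals separately; instead I use a squeeze that exploits the common nonnegative weight $\frac{\tau-1}{\sqrt{D}}$, whose integral $\beta L(\tau)$ is finite and strictly positive for $\tau>1$. Since $\xi\in[0,1]$ forces $\xi(\tau-1)+1\in[1,\tau]$, the factor $(\xi(\tau-1)+1)^{1-p}$ lies between the two $\xi$-independent quantities $\min\{1,\tau^{1-p}\}$ and $\max\{1,\tau^{1-p}\}$. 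Dividing the weighted integral by $\int_0^1\frac{\tau-1}{\sqrt{D}}\,d\xi>0$ therefore yields
$$\min\{1,\tau^{1-p}\}\le R(\tau)\le \max\{1,\tau^{1-p}\},$$
and both bounds tend to $1$ as $\tau\to1$, so $R(\tau)\to1$.

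Combining the two limits gives $\lim_{\tau\to1}F(\tau)=1$, and the stated limit $\lim_{\tau\to1}E(\tau)=\pi\lambda\frac{1+p}{1-p}\big(\frac{\lambda}{\beta^2}\big)^{\frac{1-p}{p+1}}$ follows at once from the first line of \eqref{109}. The only genuine subtlety is that the integrands defining $H$ and $\beta L$ are singular at the endpoints $\xi=0,1$ (where $D$ vanishes) while the relevant scale shrinks as $\tau\to1$, so one cannot naively interchange limit and integral in either integral by itself. The device that overcomes this is precisely to keep $F$ as a ratio so that the shared singular weight cancels in the squeeze, reducing the whole statement to the elementary facts that $\Phi(\tau)\to1$ and that $(\xi(\tau-1)+1)^{1-p}\to1$ uniformly on $[0,1]$.
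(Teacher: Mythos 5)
Your proof is correct, and it reaches the conclusion by a genuinely different technical route than the paper's. The paper likewise works with the ratio $F(\tau)=H(\tau)/(\beta L(\tau))$ from \eqref{1010} and the observation that the integrands of \eqref{3.H} and \eqref{112} differ only by the factor $f_1(\xi,\tau)=\Phi(\tau)\,(\xi(\tau-1)+1)^{1-p}$ (with your $\Phi(\tau)$), which tends to $1$ as $\tau\to 1$; but instead of a squeeze it passes the limit inside the numerator and denominator integrals separately. To justify that interchange, it proves that the improper integral $H(\tau)$ converges uniformly for $\tau\in[1,2]$ --- by the Abel test for $p>1$ (using monotonicity of $f_1$ in $\xi$ and uniform convergence of the improper integral $L(\tau)$), and by the domination $H(\tau)\le \tau^{1-p}\Phi(\tau)\,\beta L(\tau)$ for $p<1$ --- and it invokes Lemma \ref{31} for the limiting value of $L(\tau)$. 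Your argument needs none of this machinery: because both integrals carry the same nonnegative singular weight, the pointwise bounds $\min\{1,\tau^{1-p}\}\le(\xi(\tau-1)+1)^{1-p}\le\max\{1,\tau^{1-p}\}$ transfer directly to the quotient $R(\tau)$, so no limit is ever moved across a singular integral, and you only need $0<\beta L(\tau)<+\infty$ for $\tau>1$ rather than the value of its limit. The trade-off is that the paper's heavier uniform-convergence argument is a template it reuses almost verbatim where a squeeze of your type would not suffice --- namely for the $\tau\to\infty$ asymptotics in Proposition \ref{prop3.1} and for $p=1$ in Proposition \ref{prop1025}, where the distinguishing factor does not converge uniformly to a constant --- whereas your route buys a shorter, more elementary, and arguably more watertight proof of this particular proposition.
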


\noindent{\bf Proof:} Note \eqref{1010},
  and the asymptotic behavior  of $L(\tau)$ is given in Lemma \ref{31}. Therefore, it is reduced  to  study  the behavior of
$H(\tau)$ as $\tau \to 1.$ 
Denote
\begin{equation}
\begin{split}
f_1(\xi,\tau):=&(\xi(\tau-1)+1)^{1-p}\left(\frac{2(1-\tau^{1-p})}{(p-1)(\tau ^2-1)}\right)^{\frac{1-p}{p+1}},\\
f_2(\xi,\tau):=&\frac{\tau-1}{\sqrt{1+\frac{\tau^{p-1}(\tau^2-1)}{\tau^{p-1}-1}-(\xi(\tau-1)+1)^2-\frac{\tau^{p-1}(\tau^2-1)}{\tau^{p-1}-1}{1\over{(\xi(\tau-1)+1)^{p-1}}}}}.
\end{split}
\end{equation}
Then $H(\tau)=\int_{0}^{1}~f_1(\xi,\tau)f_2(\xi,\tau)~d\xi$,\,\ $\beta L(\tau)=\int_{0}^{1}~f_2(\xi,\tau)~d\xi$.
We claim that  $H(\tau)=\int_{0}^{1}~f_1(\xi,\tau)f_2(\xi,\tau)~d\xi$ converges uniformly with respect to $\tau\in[1,2]$.
In fact, for $p>1$, note that the  improper integral $L(\tau)$ converges uniformly on $[1,2]$, $f_1(\xi,\tau)$  monotonically decreases in $\xi$ and is uniformly bounded  on $[1,2]$. The uniform convergence of  $H(\tau)$  follows by the Abel test.
For $p<1$, note that
$$H(\tau)\leq\tau^{1-p}\left(\frac{2(1-\tau^{1-p})}{(p-1)(\tau ^2-1)}\right)^{\frac{1-p}{p+1}}\beta L(\tau),$$ the uniform convergence of $H(\tau)$ on $[1,2]$ also holds.

Thus, by \eqref{1010} and $\lim_{\tau\to 1}~f_1(\xi,\tau)=1$ we have

\begin{equation}
\begin{split}
\lim_{\tau\to 1}F(\tau)=\lim_{\tau\to 1}\frac{\int_0^1 ~f_1(\xi,\tau)f_2(\xi,\tau)~d\xi}{\int_0^1 ~f_2(\xi,\tau)~d\xi}
=\frac{\int_0^1\lim_{\tau\to 1}~f_1(\xi,\tau) f_2(\xi,\tau)~d\xi}{\int_0^1\lim_{\tau\to 1}~f_2(\xi,\tau)~d\xi}\notag
=1.
\end{split}
\end{equation}
The proof  is complete.
\qed

\subsubsection{The asymptotic behavior of $E(\tau)$ as $\tau\to \infty$} \label{126}

The cases $p\in(0,1),$ $p\in(3,+\infty)$ and $p\in(1,3)$ are considered in propositions \ref{prop3.1}, \ref{122}, and \ref{123}, respectively.  Note that the study of   case $p\in(1,3)$ is  more complicated than that of the other two cases.  In fact, these involve elliptic integrals.

\begin{prop}\label{prop3.1} Let $p\in(0,1).$ Suppose that $H(\tau)$ is defined in \eqref{3.H}. Then we have
\beq
\lim_{\tau\to \infty}H(\tau) \overset{\triangle}{=}H(+\infty)=\left(\frac{2}{1-p}\right)^{\frac{1-p}{p+1}}\int_0^1\frac{\xi^{1-p}}{\sqrt{\xi^{1-p}-\xi^2}}d\xi.
\eeq
By Lemma \ref{31} and \eqref{109} this  implies that
\beq
\lim_{\tau\to \infty}F(\tau) \overset{\triangle}{=}C(p)=\frac{p+1}{\pi}H(+\infty)
\eeq
and
\beq
\lim_{\tau\to \infty}E(\tau) =\pi\lambda\frac{1+p}{1-p}  \big(\frac{\lambda}{\beta^2}\big)^{\frac{1-p}{p+1}} C(p).
\eeq
\end{prop}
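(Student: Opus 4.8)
The plan is to reduce everything to the single limit $\lim_{\tau\to\infty}H(\tau)$. Indeed, by \eqref{1010} we have $F(\tau)=H(\tau)/(\beta L(\tau))$; Lemma \ref{31} gives $\beta L(\tau)\to\pi/(p+1)$ as $\tau\to\infty$ for $p\in(0,1)$; and \eqref{109} converts the limit of $F$ into that of $E$. Writing $H(\tau)=\int_0^1 G(\xi,\tau,p)\,d\xi$ with $G=f_1f_2$ (the factors $f_1,f_2$ being those introduced in the proof of Proposition \ref{prop3.2}), I would first establish the pointwise limit of the integrand and then justify interchanging limit and integral.

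For the \emph{pointwise limit}, fix $\xi\in(0,1)$ and let $\tau\to\infty$. Since $p<1$ we have $\tau^{1-p}\to\infty$, so the bracket in $f_1$ satisfies $\frac{2(1-\tau^{1-p})}{(p-1)(\tau^2-1)}\sim\frac{2}{1-p}\tau^{-1-p}$, whence its $\frac{1-p}{p+1}$-power is asymptotic to $(\frac{2}{1-p})^{\frac{1-p}{p+1}}\tau^{p-1}$; combined with $(\xi(\tau-1)+1)^{1-p}\sim\xi^{1-p}\tau^{1-p}$ this gives $f_1\to(\frac{2}{1-p})^{\frac{1-p}{p+1}}\xi^{1-p}$. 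For $f_2$, set $A=\frac{\tau^{p-1}(\tau^2-1)}{\tau^{p-1}-1}$; since $p<1$ one has $A<0$ and $A\sim-\tau^{p+1}$, so the radicand $1+A(1-y^{1-p})-y^2$ with $y=\xi(\tau-1)+1\sim\xi\tau$ is asymptotic to $\tau^2(\xi^{1-p}-\xi^2)$, using $\tau^{p+1}/\tau^2\to0$; hence $f_2\to(\xi^{1-p}-\xi^2)^{-1/2}$. Therefore $G\to G_\infty(\xi):=(\frac{2}{1-p})^{\frac{1-p}{p+1}}\frac{\xi^{1-p}}{\sqrt{\xi^{1-p}-\xi^2}}$, whose integral over $(0,1)$ is exactly the claimed $H(+\infty)$. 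Note $G_\infty$ is integrable: near $\xi=1$, $\xi^{1-p}-\xi^2\sim(1+p)(1-\xi)$ yields an integrable $(1-\xi)^{-1/2}$ singularity, while near $\xi=0$ one has $G_\infty\sim\xi^{(1-p)/2}\to0$.

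The heart of the proof is \emph{passing the limit inside the integral}, which is delicate at both endpoints: at $\xi=1$ the limiting integrand is genuinely singular, while at $\xi=0$ the finite-$\tau$ integrand is singular (there $y\to1$, a zero of the radicand) even though the limit is not. I would use an $\varepsilon/3$ decomposition $[0,1]=[0,\delta']\cup[\delta',1-\delta]\cup[1-\delta,1]$. On the middle interval $G\to G_\infty$ uniformly and the integrand is bounded, so that piece converges. For the endpoint pieces the key tool is the \emph{concavity} of the radicand $P(y):=1+A-y^2-Ay^{1-p}$: a direct computation gives $P''(y)=-2+A(1-p)p\,y^{-p-1}<0$ on $(1,\tau)$ (because $A<0$), together with $P(1)=P(\tau)=0$. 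Concavity yields the chord lower bounds $P(y)\ge\frac{\tau-y}{\tau-y^*}P(y^*)$ for $y\in[y^*,\tau]$ and $P(y)\ge\frac{y-1}{y^{**}-1}P(y^{**})$ for $y\in[1,y^{**}]$; choosing interior reference points and tracking the $\tau^2$-scaling of $P$ produces $\tau$-uniform majorants $f_2\lesssim(1-\xi)^{-1/2}$ near $\xi=1$ and $f_2\lesssim\xi^{-1/2}$ near $\xi=0$. The right tail is then controlled by $\int_{1-\delta}^1 G\,d\xi\lesssim\sqrt\delta$ uniformly in $\tau$. For the left tail one must additionally use $f_1=O(\tau^{p-1})$ with the sharper bound $f_1\lesssim\xi^{1-p}+\tau^{p-1}$, giving $\int_0^{\delta'}G\,d\xi\lesssim(\delta')^{3/2-p}+\tau^{p-1}\sqrt{\delta'}$, whose second term vanishes as $\tau\to\infty$ and whose first term is small as $\delta'\to0$ (here $3/2-p>0$ since $p<1$). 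Letting $\tau\to\infty$ and then $\delta,\delta'\to0$ in the three pieces shows $H(\tau)\to H(+\infty)$.

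I expect the \textbf{main obstacle} to be precisely this interchange near the left endpoint $\xi=0$: the integrand is improperly singular there for every finite $\tau$ but not in the limit, so a naive dominated-convergence bound fails, and one must combine the chord estimate for $f_2$ with the uniform smallness $O(\tau^{p-1})$ of $f_1$. Once $H(\tau)\to H(+\infty)$ is secured, the remaining conclusions are immediate: \eqref{1010} with $\beta L(\tau)\to\pi/(p+1)$ gives $\lim_{\tau\to\infty}F(\tau)=\frac{p+1}{\pi}H(+\infty)=C(p)$, and \eqref{109} then yields $\lim_{\tau\to\infty}E(\tau)=\pi\lambda\frac{1+p}{1-p}(\frac{\lambda}{\beta^2})^{\frac{1-p}{p+1}}C(p)$.
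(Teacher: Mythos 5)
Your proposal is correct, and its overall skeleton matches the paper's: identify the pointwise limit of the integrand $G(\xi,\tau,p)$, justify interchanging $\lim_{\tau\to\infty}$ with $\int_0^1$, then divide by $\beta L(\tau)\to\pi/(p+1)$ (Lemma \ref{31}) and invoke \eqref{109}. Where you genuinely diverge is in the justification of the interchange, which is the only nontrivial step. The paper disposes of it in one sentence: "as in the proof of Proposition \ref{prop3.2}, $H(\tau)$ converges uniformly on $[1,+\infty)$," the point there being that $f_1=(\xi(\tau-1)+1)^{1-p}A(\tau)\le\tau^{1-p}A(\tau)$ is uniformly bounded, so the integrand of $H$ is dominated by a bounded multiple of the integrand of $L(\tau)$, and uniform convergence of the improper integral $L$ carries over to $H$ by comparison; the interchange then follows from the standard theorem on uniformly convergent parameter-dependent improper integrals. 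You instead build a self-contained proof: a three-piece decomposition $[0,\delta']\cup[\delta',1-\delta]\cup[1-\delta,1]$, with the endpoint tails controlled by chord lower bounds coming from the concavity of the radicand $P(y)$ (using $P(1)=P(\tau)=0$, $P''<0$ since $A<0$ for $p<1$) and the refined bound $f_1\le C(\xi^{1-p}+\tau^{p-1})$. Your estimates check out (in particular $P(\tau)=0$ follows from $\tau^{p-1}(1-\tau^{1-p})=\tau^{p-1}-1$, and the left-tail bound $(\delta')^{3/2-p}+\tau^{p-1}\sqrt{\delta'}$ is what makes the problematic $\xi=0$ endpoint — singular for finite $\tau$ but regular in the limit — harmless). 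What each route buys: the paper's comparison argument is shorter and recycles work already done for $L(\tau)$, but leaves the endpoint analysis implicit; your argument is longer but explicit and independent of Proposition \ref{prop3.2}, and it is amusing to note that your key device — concavity of the radicand plus chord bounds — is exactly the trick the paper itself deploys, but only later, in Proposition \ref{122} for the case $p>3$.
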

\begin{proof}
As in the proof of Proposition \ref{prop3.2}, it can be seen that $H(\tau)$ converges uniformly on $[1,+\infty)$. Hence, we have
$$\lim_{\tau\to \infty}H(\tau)=\int_0^1 \lim_{\tau\to \infty} G(\xi,\tau,p)\mathrm{d}\xi=\left(\frac{2}{1-p}\right)^{\frac{1-p}{p+1}}\int_0^1\frac{\xi^{1-p}}{\sqrt{\xi^{1-p}-\xi^2}}d\xi$$
and the proof is complete.
\qed
\end{proof}

\begin{rem}
  We point out that $F(+\infty)>1$ for all $p\in(0,1).$
\end{rem}

Next, in preparation for what follows,  rewrite  $H(\tau)$ in \eqref{3.H} as
\beq\label{41}
H(\tau)=A(\tau)\int_1^{\tau}\frac{dy}{y^{p-1}\sqrt{I(y,\tau)}}
\eeq
with $A(\tau)=\left(\frac{2(1-\tau^{1-p})}{(p-1)(\tau ^2-1)}\right)^{\frac{1-p}{p+1}},$ $I(y,\tau)=1+z(\tau)-y^2-z(\tau)y^{1-p}$ and $z(\tau)=\frac{\tau^{p-1}(\tau^2-1)}{\tau^{p-1}-1}.$

For the case $p>3,$ the asymptotic behavior of $E(\tau)$ as $\tau\to \infty$ can be obtained as follows:

\begin{prop}\label{122} Let $p>3.$  Then it holds
\beq
\lim_{\tau\to \infty}F(\tau) =+\infty,
\eeq
which implies $\lim_{\tau\to \infty}E(\tau) =+\infty.$
\end{prop}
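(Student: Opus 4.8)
The plan is to show that $F(\tau)\to+\infty$ as $\tau\to\infty$ when $p>3$, and then deduce the divergence of $E(\tau)$ from the explicit formula \eqref{109}, which for $p\neq 1$ reads $E(\tau)=\pi\lambda\frac{1+p}{1-p}\big(\frac{\lambda}{\beta^2}\big)^{\frac{1-p}{p+1}}F(\tau)$. Since $p>3>1$, the prefactor $\frac{1+p}{1-p}$ is a fixed negative constant; hence $F(\tau)\to+\infty$ forces $E(\tau)\to-\infty$. (I note in passing that the stated conclusion $E(\tau)\to+\infty$ appears to have a sign discrepancy with \eqref{109}, so in writing this up I would either track the sign carefully or simply record the divergence $|E(\tau)|\to\infty$, which is all that matters for the monotonicity discussion in Section \ref{134}.) The heart of the matter is therefore the behavior of $F(\tau)=H(\tau)/(\beta L(\tau))$, and by Lemma \ref{31} we already know $\beta L(\tau)\to\frac{\pi}{2}$ is bounded and bounded away from zero for $p>3$. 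Thus it suffices to prove $H(\tau)\to+\infty$.

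First I would work with the representation \eqref{41}, namely $H(\tau)=A(\tau)\int_1^{\tau}\frac{dy}{y^{p-1}\sqrt{I(y,\tau)}}$ with $A(\tau)=\big(\frac{2(1-\tau^{1-p})}{(p-1)(\tau^2-1)}\big)^{\frac{1-p}{p+1}}$, $I(y,\tau)=1+z(\tau)-y^2-z(\tau)y^{1-p}$, and $z(\tau)=\frac{\tau^{p-1}(\tau^2-1)}{\tau^{p-1}-1}$. As $\tau\to\infty$ with $p>3$, I would first record the leading asymptotics: $z(\tau)\sim\tau^2$, and since $1-p<0$ the prefactor satisfies $A(\tau)=\big(\frac{2}{p-1}\big)^{\frac{1-p}{p+1}}(\tau^2-1)^{\frac{p-1}{p+1}}(1+o(1))\sim c_p\,\tau^{\frac{2(p-1)}{p+1}}$ for an explicit positive constant $c_p$. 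Because $\frac{2(p-1)}{p+1}>0$, the prefactor $A(\tau)$ already blows up; the task is to verify that the integral factor does not decay fast enough to compensate, i.e. that $A(\tau)\int_1^\tau y^{1-p}I(y,\tau)^{-1/2}\,dy$ diverges.

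The key step is to extract a divergent contribution from the integral near the upper endpoint $y=\tau$, where $w$ attains its maximum. Near $y=\tau$ the dominant balance in $I(y,\tau)=1+z-y^2-zy^{1-p}$ is between the $-y^2$ and $+z$ terms, with the correction $zy^{1-p}$ subdominant since $y^{1-p}\le 1$ is small for large $y$; one checks $I(\tau,\tau)=0$ (as it must, since $\tau$ is a turning point) and that $I$ vanishes linearly there, giving the expected integrable square-root singularity. The more delicate point, and what I expect to be the main obstacle, is to show that after multiplying by $A(\tau)\sim c_p\tau^{\frac{2(p-1)}{p+1}}$ the surviving contribution grows without bound. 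The cleanest route is to change variables $y=\tau\eta$ (or $y=\tau s$) so as to rescale the integral onto a fixed interval and track how the integrand concentrates; under this scaling the factor $y^{1-p}=\tau^{1-p}\eta^{1-p}$ carries a negative power of $\tau$, and the question becomes whether the combined power of $\tau$ from $A(\tau)$, from the Jacobian, from $y^{1-p}$, and from the $\sqrt{I}$ normalization is positive. I would compute these exponents explicitly; the expectation from the analogous finite limit in Proposition \ref{prop3.1} for $p<1$ (where the same structure yields a \emph{finite} limit) is that the sign of $1-p$ flips the balance and produces divergence for $p>3$.

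An alternative and possibly cleaner strategy, which I would keep in reserve, is to avoid the sharp asymptotic analysis altogether and instead prove divergence by a lower bound. Concretely, one can bound $F(\tau)$ below by restricting the numerator integral in \eqref{111} to a subinterval of $\xi$ on which the integrand is comparable to a manageable quantity, and then show that this lower bound tends to $+\infty$; equivalently, using $F(\tau)=\frac{1}{2\pi}\big(\frac{\lambda}{\beta^2}\big)^{\frac{p-1}{p+1}}\int_{S^1}w^{1-p}\,d\theta$ and the fact that $w$ has a minimum $w_1$ with $w_1^{p+1}\to 0$ as $\tau\to\infty$ (from \eqref{9}, since $w_1^{p+1}=\frac{2\lambda(1-\tau^{1-p})}{\beta^2(p-1)(\tau^2-1)}\sim\frac{2\lambda}{\beta^2(p-1)}\tau^{-2}$), one sees that $w^{1-p}$ becomes very large near the minimum because $1-p<0$. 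Estimating the measure of the set where $w$ is close to $w_1$ and combining with the blow-up of $w_1^{1-p}$ should give $\int_{S^1}w^{1-p}\,d\theta\to\infty$ directly. I expect this energy/integral-based lower bound to be the most robust way to finish, with the asymptotic change-of-variables computation serving to pin down the precise rate if needed.
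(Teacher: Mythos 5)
You correctly caught a sign slip in the statement: by \eqref{109}, for $p>3$ the prefactor $\pi\lambda\frac{1+p}{1-p}\big(\frac{\lambda}{\beta^2}\big)^{\frac{1-p}{p+1}}$ is negative, so $F(\tau)\to+\infty$ in fact forces $E(\tau)\to-\infty$ (the same slip appears in Table \ref{130}); this is harmless for the paper, since the ordering \eqref{133} only uses monotonicity of $E$ in $\tau$. The genuine gap is in your proof that $F(\tau)$, equivalently $H(\tau)$, diverges. Your primary strategy looks for the divergence ``near the upper endpoint $y=\tau$, where $w$ attains its maximum,'' and that is the wrong place: since $1-p<0$, the weight $y^{1-p}$ in \eqref{41} is \emph{smallest} there, not largest. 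Concretely, $I(\cdot,\tau)$ is concave in $y$ with $I(1,\tau)=I(\tau,\tau)=0$ and $|I_y(\tau,\tau)|\sim 2\tau$, so the contribution of $[\tau/2,\tau]$ to $\int_1^\tau y^{1-p}I^{-1/2}\,dy$ is $O(\tau^{1-p})$; after multiplying by $A(\tau)\sim c_p\,\tau^{\frac{2(p-1)}{p+1}}$ this is $O\big(\tau^{-\frac{(p-1)^2}{p+1}}\big)\to 0$. All of the divergence lives at the \emph{lower} endpoint $y=1$, i.e.\ in the neighborhood of the minimum $w_1$ of $w$.

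Your reserve strategy does target the minimum, but it stops exactly at the step that carries the whole difficulty: ``estimating the measure of the set where $w$ is close to $w_1$.'' That set shrinks as $\tau\to\infty$ (its $\theta$-measure is of order $w_1^{(p+1)/2}$), while $w_1^{1-p}$ blows up, and the product is $w_1^{(3-p)/2}$, which diverges if and only if $p>3$. The identical unquantified heuristic applied to $1<p<3$ would ``prove'' divergence as well, yet there $F(\tau)\to 0$ (Proposition \ref{123}); so the measure estimate is not a routine detail to be filled in later, it is the exact point where the hypothesis $p>3$ enters, and your proposal never supplies it. The paper supplies it in one stroke: since $I(\cdot,\tau)$ is concave in $y$ for $p>1$ and $I(1,\tau)=0$, one has $I(y,\tau)\le I_y(1,\tau)(y-1)$ with $I_y(1,\tau)=z(\tau)(p-1)-2\sim(p-1)\tau^2$, whence, for large $\tau$,
\[
H(\tau)\;\ge\; A(\tau)\int_1^2\frac{dy}{y^{p-1}\sqrt{I_y(1,\tau)(y-1)}}\;\ge\;\frac{2A(\tau)}{2^{p-1}\sqrt{z(\tau)(p-1)-2}}\;\ge\; C(p)\,\tau^{\frac{p-3}{p+1}}\;\longrightarrow\;+\infty ,
\]
the exponent being positive precisely because $p>3$. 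To salvage your approach you would need an equivalent quantitative lower bound near $y=1$ (a bound $\gtrsim\tau^{-1}$ on the $y$-integral over the turning-point region, i.e.\ $\gtrsim w_1^{(p+1)/2}$ on its $\theta$-measure), which is exactly this tangent-line estimate.
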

\begin{proof}
Note that $F(\tau)=H(\tau)/\beta L(\tau)$ and $\lim_{\tau\rightarrow +\infty}L(\tau)=\pi/2$ by Lemma \ref{31}. It is reduced to proving
\beq\label{42}
\lim_{\tau\to \infty}H(\tau) =+\infty.
\eeq
Recall \eqref{41}. By noting that
\beq
I(y,\tau)\leq I_y(1,\tau)(y-1), \quad \forall \ y\in(1,\tau)
\eeq
due to the concavity of $I$ with respect to $y$ and $I(1,\tau)=0$. It  holds that
\beq
\begin{split}
H(\tau)=&A(\tau)\int_1^{\tau}\frac{dy}{y^{p-1}\sqrt{I(y,\tau)}}\\
\geq& A(\tau)\int_1^2\frac{dy}{y^{p-1}\sqrt{I_y(1,\tau)(y-1)}}\\
\geq& A(\tau)\frac{1}{2^{p-1}\sqrt{-2+z(\tau)(p-1)}}\int_1^2\frac{dy}{\sqrt{y-1}}\\
\geq & C(p) \tau^{\frac{p-3}{p+1}}
\end{split}
\eeq
for $\tau$ large, where $C(p)$ is a positive constant that depends only on $p.$ Therefore, \eqref{42} holds and  the proof is complete.
\qed
\end{proof}

In the remainder of this subsection, we   focus on   studying   the asymptotic behavior of $E(\tau)$ as $\tau\rightarrow +\infty$ for the case $1<p<3,$ which is more complicated than the previous two cases.  In fact, these involve elliptic integrals. In preparation, we provide the following lemmas:

\begin{lem}\label{43}
It holds
 \beq
 K(k^2)\overset{\triangle}{=}\int_0^{\pi\over2}\frac{d\theta}{\sqrt{1-k^2sin^2\theta}}
 =\frac{{\pi\over2}}{AGM(1,\sqrt{1-k^2})}.
 \eeq
where $K(k^2)$ is  the complete elliptic integral of the first kind and $AGM(x,y)$ is the arithmetic-geometric mean of two positive real numbers $x$ and $y.$
\end{lem}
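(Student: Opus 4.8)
The plan is to establish the classical Gauss identity connecting the complete elliptic integral of the first kind with the arithmetic-geometric mean. The cleanest route I would take is to show directly that the function $M(k):=AGM(1,\sqrt{1-k^2})$ and the quantity $\frac{\pi}{2K(k^2)}$ satisfy the \emph{same} invariance under the Landen-type quadratic transformation, and that both tend to $1$ in a common limit; by uniqueness this forces them to coincide. Concretely, I would introduce $a_0=1$, $b_0=\sqrt{1-k^2}$ and the Gauss iteration $a_{n+1}=\tfrac{1}{2}(a_n+b_n)$, $b_{n+1}=\sqrt{a_n b_n}$, whose common limit is $AGM(1,\sqrt{1-k^2})$.

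The key computational step is the substitution lemma for the elliptic integral itself. Writing $I(a,b)=\int_0^{\pi/2}\frac{d\theta}{\sqrt{a^2\cos^2\theta+b^2\sin^2\theta}}$, one checks that $K(k^2)=I(1,\sqrt{1-k^2})$ after the change of variables aligning $k^2=1-(b/a)^2$. I would then prove the invariance
\begin{equation}
I(a,b)=I\!\left(\tfrac{a+b}{2},\sqrt{ab}\right)
\end{equation}
via Gauss's substitution $\sin\theta=\frac{2a\sin\phi}{(a+b)+(a-b)\sin^2\phi}$, which transforms the integrand exactly into the integrand for the new pair $(a_1,b_1)=\big(\tfrac{a+b}{2},\sqrt{ab}\big)$. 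Iterating, $I(a_0,b_0)=I(a_n,b_n)$ for all $n$, and since $a_n,b_n\to AGM(1,\sqrt{1-k^2})=:M$, the monotone convergence of the pair lets me pass to the limit to obtain $I(1,\sqrt{1-k^2})=I(M,M)=\int_0^{\pi/2}\frac{d\theta}{M}=\frac{\pi/2}{M}$. Combining with $K(k^2)=I(1,\sqrt{1-k^2})$ yields the stated formula.

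The main obstacle, and the only genuinely delicate point, is verifying the substitution identity: one must confirm that Gauss's change of variables is an increasing bijection of $[0,\pi/2]$ onto itself (so the limits of integration are preserved) and that the algebraic simplification $d\theta/\sqrt{a^2\cos^2\theta+b^2\sin^2\theta}=d\phi/\sqrt{a_1^2\cos^2\phi+b_1^2\sin^2\phi}$ holds identically after clearing denominators. I would carry this out by computing $a^2\cos^2\theta+b^2\sin^2\theta$ in terms of $\phi$ and differentiating the substitution to get $d\theta$, then checking that the two radicals match term by term; this is a bounded but somewhat tedious trigonometric verification. Everything else—the boundedness and monotonicity of the AGM iteration, and the interchange of limit and integral justified by uniform convergence $a_n,b_n\to M$—is routine.
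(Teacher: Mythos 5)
Your proposal is correct, but it is worth knowing that the paper does not prove this lemma at all: immediately after stating it, the authors simply remark that ``This is a well-known result'' and move on (it is Gauss's classical theorem relating $K$ to the arithmetic--geometric mean). What you have written out is precisely the standard classical proof: identify $K(k^2)=I(1,\sqrt{1-k^2})$ with $I(a,b)=\int_0^{\pi/2}\bigl(a^2\cos^2\theta+b^2\sin^2\theta\bigr)^{-1/2}\,d\theta$, establish the Gauss--Landen invariance $I(a,b)=I\bigl(\tfrac{a+b}{2},\sqrt{ab}\bigr)$ via the substitution $\sin\theta=\frac{2a\sin\phi}{(a+b)+(a-b)\sin^2\phi}$, iterate along the AGM sequence, and pass to the limit $I(M,M)=\frac{\pi/2}{M}$ using the uniform convergence $a_n,b_n\to M$ (which is legitimate here since $b_0=\sqrt{1-k^2}>0$ bounds the integrands uniformly for $k\in[0,1)$). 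Your plan is sound; the only unfinished business is the ``tedious trigonometric verification'' of the substitution identity, which you correctly flag as the one delicate step and which does go through exactly as you describe. The trade-off between the two treatments is the usual one: the paper's citation is economical because the identity is textbook material (the authors cite the Borweins' work nearby for a related asymptotic), while your argument makes the statement self-contained at the cost of a page of classical computation that contributes nothing specific to the MEMS problem at hand.
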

This  is a well-known result, and from which we  obtain
\begin{lem}\label{44}
It holds
\beq
\int_1^{\tau}\frac{1}{\sqrt{y}\sqrt{y-1}\sqrt{\tau-y}}dy=\frac{2}{\sqrt{\tau}}K\left(1-{1\over\tau}\right)=\frac{2}{\sqrt{\tau}}\frac{{\pi/2}}{AGM({1\over{\sqrt{\tau}}},1)}.
\eeq
\end{lem}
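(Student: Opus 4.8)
The plan is to reduce the left-hand integral to the Legendre normal form and then invoke Lemma~\ref{43}. The key observation is that the integrand carries the cubic $y(y-1)(\tau-y)$ under the square root, whose three roots $0<1<\tau$ are real and simple, and the integration runs between the two largest consecutive roots. This is precisely the configuration in which a single half-angle trigonometric substitution linearizes the problem and produces a complete (rather than incomplete) elliptic integral.

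First I would set $\tau-y=(\tau-1)\sin^2\phi$, equivalently $y-1=(\tau-1)\cos^2\phi$, so that $y=1$ corresponds to $\phi=\pi/2$ and $y=\tau$ to $\phi=0$. Differentiating gives $dy=-2(\tau-1)\sin\phi\cos\phi\,d\phi$, while the three factors become $y=\tau-(\tau-1)\sin^2\phi$, $y-1=(\tau-1)\cos^2\phi$, and $\tau-y=(\tau-1)\sin^2\phi$. Since $\sin\phi,\cos\phi\ge 0$ on $[0,\pi/2]$, taking the square root of the product yields $(\tau-1)\sin\phi\cos\phi\,\sqrt{\tau-(\tau-1)\sin^2\phi}$ in the denominator, which cancels the $\sin\phi\cos\phi$ supplied by $dy$. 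After reversing the orientation of the limits (the map $\phi\mapsto y$ sends $\pi/2\mapsto 1$ and $0\mapsto\tau$, which together with the minus sign gives a positive integral over $[0,\pi/2]$), one obtains
\[
\int_1^{\tau}\frac{dy}{\sqrt{y}\sqrt{y-1}\sqrt{\tau-y}}=\int_0^{\pi/2}\frac{2\,d\phi}{\sqrt{\tau-(\tau-1)\sin^2\phi}}=\frac{2}{\sqrt{\tau}}\int_0^{\pi/2}\frac{d\phi}{\sqrt{1-\frac{\tau-1}{\tau}\sin^2\phi}}=\frac{2}{\sqrt{\tau}}\,K\!\left(1-\tfrac1\tau\right),
\]
in the notation of Lemma~\ref{43}, using $\frac{\tau-1}{\tau}=1-\frac1\tau$.

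Finally I would apply Lemma~\ref{43} with $k^2=1-\frac1\tau$. Then $1-k^2=\frac1\tau$, so $\sqrt{1-k^2}=1/\sqrt{\tau}$, and the symmetry $AGM(1,x)=AGM(x,1)$ gives $K(1-\tfrac1\tau)=\frac{\pi/2}{AGM(1/\sqrt{\tau},1)}$, which produces the second equality in the statement.

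There is no deep obstacle here, since the result is a standard change of variables; the only point demanding care is the bookkeeping in the substitution. Concretely, one must choose the half-angle variable so that the $\sin\phi\cos\phi$ factors from $dy$ and from the square root cancel exactly rather than leaving a residual factor, and one must track the orientation-reversing limits correctly. Getting these two details right is exactly what turns the cubic integral cleanly into the standard $K(k^2)$, after which Lemma~\ref{43} finishes the proof.
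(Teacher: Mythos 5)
Your proof is correct and takes essentially the same route as the paper: your single substitution $\tau-y=(\tau-1)\sin^2\phi$ is exactly the composition of the paper's completing-the-square substitution $y-\frac{\tau+1}{2}=\frac{\tau-1}{2}\sin X$ with its subsequent half-angle step $X=2\theta$, and both arguments conclude by invoking Lemma \ref{43} with $k^2=1-\frac{1}{\tau}$ together with the symmetry of the AGM. The sign and limit bookkeeping in your substitution is handled correctly, so there is no gap.
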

\begin{proof}
By calculation, we have
\beq
\begin{split}
&\int_1^{\tau}\frac{1}{\sqrt{y}\sqrt{y-1}\sqrt{\tau-y}}dy\\
=&\int_1^{\tau}\frac{1}{\sqrt{y}\sqrt{\frac{(\tau-1)^2}{4}-\left(y-\frac{\tau+1}{2}\right)^2}}dy\\
\xlongequal{y-\frac{\tau+1}{2}=\frac{\tau-1}{2}\sin X}=&\int_{-\pi/2}^{\pi/2}\frac{dX}{\sqrt{\frac{\tau+1}{2}+\frac{\tau-1}{2}\sin X}}\\
=&\int_{0}^{\pi}\frac{dX}{\sqrt{\frac{\tau+1}{2}+\frac{\tau-1}{2}\cos X}}\\
\xlongequal{X=2\theta}&\frac{2}{\sqrt{\tau}}\int_{0}^{\pi/2}\frac{d\theta}{\sqrt{1-(1-\frac{1}{\tau})\sin^2\theta }}\\
=&\frac{2}{\sqrt{\tau}}K\left(1-{1\over\tau}\right).
\end{split}
\eeq
Thus the proof is complete by Lemma \ref{43}.
\qed
\end{proof}

Second, we present another  result (see,e.g. \cite[Theorem 2.5]{MR1010408}) as follows:
\begin{lem}\label{45}
$\frac{{\pi/2}}{AGM(k,1)}\sim \ln(\frac{4}{k})$ as $k\rightarrow 0^+.$

\end{lem}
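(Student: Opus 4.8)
The plan is to reduce the claim to the classical logarithmic blow-up of the complete elliptic integral at modulus $1$, and then to pin down the constant $4$ by subtracting off an explicitly computable model integral with the same singularity. First I would invoke Lemma~\ref{43} with the parameter $1-k^2$ in place of $k^2$ (equivalently, with modulus $\sqrt{1-k^2}$), which gives
\[
\frac{\pi/2}{AGM(k,1)}=K(1-k^2)=\int_0^{\pi/2}\frac{d\theta}{\sqrt{1-(1-k^2)\sin^2\theta}}=\int_0^{\pi/2}\frac{d\theta}{\sqrt{\cos^2\theta+k^2\sin^2\theta}}.
\]
After the reflection $\theta\mapsto\frac{\pi}{2}-\theta$ the right-hand side equals $I(k):=\int_0^{\pi/2}(\sin^2\theta+k^2\cos^2\theta)^{-1/2}\,d\theta$, so it suffices to prove the sharper statement $I(k)=\ln(4/k)+o(1)$ as $k\to0^+$.

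Next I would introduce the comparison integral $J(k):=\int_0^{\pi/2}\cos\theta\,(\sin^2\theta+k^2)^{-1/2}\,d\theta$, which isolates the same singularity but is elementary: the substitution $s=\sin\theta$ yields
\[
J(k)=\int_0^1\frac{ds}{\sqrt{s^2+k^2}}=\ln\frac{1+\sqrt{1+k^2}}{k}=\ln(4/k)-\ln2+o(1)\qquad(k\to0^+).
\]
Hence everything is reduced to showing that the difference $I(k)-J(k)$ converges to $\ln2$.

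To evaluate this limit I would note that for each fixed $\theta\in(0,\pi/2]$ the integrand $(\sin^2\theta+k^2\cos^2\theta)^{-1/2}-\cos\theta\,(\sin^2\theta+k^2)^{-1/2}$ converges, as $k\to0^+$, to $\frac{1}{\sin\theta}-\cot\theta=\tan(\theta/2)$, and that $\int_0^{\pi/2}\tan(\theta/2)\,d\theta=\big[-2\ln\cos(\theta/2)\big]_0^{\pi/2}=\ln2$ produces exactly the required constant. To justify passing the limit under the integral sign I would split the integrand as $D_1+D_2$ with $D_2:=(1-\cos\theta)(\sin^2\theta+k^2)^{-1/2}$ and $D_1:=(\sin^2\theta+k^2\cos^2\theta)^{-1/2}-(\sin^2\theta+k^2)^{-1/2}\ge0$. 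The term $D_2$ converges to $\tan(\theta/2)$ and is dominated by $\frac{\pi}{4}\theta$ (using $1-\cos\theta\le\theta^2/2$ and $\sin\theta\ge\frac{2}{\pi}\theta$), so dominated convergence gives $\int_0^{\pi/2}D_2\,d\theta\to\ln2$; for $D_1$, rationalizing gives $D_1=\frac{k^2\sin^2\theta}{\sqrt{ab}\,(\sqrt a+\sqrt b)}$ with $a=\sin^2\theta+k^2\cos^2\theta$ and $b=\sin^2\theta+k^2$, and the elementary bound $\frac{k^2s^2}{(s^2+k^2)^{3/2}}\le\frac{2}{3\sqrt3}\,k$ (maximum at $s=\sqrt2\,k$) shows $D_1=O(k)$ uniformly in $\theta$, so $\int_0^{\pi/2}D_1\,d\theta\to0$.

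I expect the only delicate point to be this last uniform control near $\theta=0$: there both integrands blow up like $(\theta^2+k^2)^{-1/2}$, so the convergence $I(k)-J(k)\to\ln2$ hinges on the exact cancellation of the two singular parts, and one must bound the \emph{difference} $D_1$ (for which the cancellation is visible) rather than the two pieces separately. Once this is established, combining the three displays gives $I(k)=J(k)+\ln2+o(1)=\ln(4/k)+o(1)$, and in particular $\frac{\pi/2}{AGM(k,1)}\sim\ln(4/k)$ as $k\to0^+$, which is the assertion.
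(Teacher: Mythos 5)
Your argument is correct, but it necessarily takes a different route from the paper, because the paper contains no proof of Lemma~\ref{45} at all: the lemma is quoted as a known result, with a pointer to \cite{MR1010408} (Theorem 2.5), where the logarithmic blow-up of the complete elliptic integral near modulus $1$ is part of the classical AGM theory. You instead give a self-contained derivation: Lemma~\ref{43} applied with parameter $1-k^2$ identifies $\frac{\pi/2}{AGM(k,1)}$ with $K(1-k^2)=\int_0^{\pi/2}(\sin^2\theta+k^2\cos^2\theta)^{-1/2}d\theta$; you subtract the elementary comparison integral $J(k)=\int_0^{\pi/2}\cos\theta\,(\sin^2\theta+k^2)^{-1/2}d\theta=\ln\frac{1+\sqrt{1+k^2}}{k}$, which carries the same singularity; and you show the difference tends to $\int_0^{\pi/2}\tan(\theta/2)\,d\theta=\ln 2$ by dominated convergence on $D_2$ together with a uniform $O(k)$ bound on the rationalized remainder $D_1$. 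I checked the computations (the antiderivatives, the value $\ln 2$, and the maximum $\frac{2}{3\sqrt 3}k$ of $k^2s^2(s^2+k^2)^{-3/2}$) and they are right; the only step left implicit is the lower bound $a=\sin^2\theta+k^2\cos^2\theta\ge(1-k^2)\bigl(\sin^2\theta+k^2\bigr)$, which is needed to pass from $D_1=\frac{k^2\sin^2\theta}{\sqrt{ab}(\sqrt a+\sqrt b)}$ to a bound in terms of $b=\sin^2\theta+k^2$ alone, and that is a one-line verification. What your route buys: it makes the paper self-contained at this point, and it proves the sharper additive statement $\frac{\pi/2}{AGM(k,1)}=\ln(4/k)+o(1)$, which is the form that actually pins down the constant $4$ --- as a bare ``$\sim$'' relation the lemma would hold with $4$ replaced by any positive constant, since the ratio of the logarithms tends to $1$ --- and this sharper form is more than enough for the paper's only use of the lemma, namely the upper bound in the proof of Proposition~\ref{123}. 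What the paper's citation buys is brevity and deference to a standard source.
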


Based on  these preparations, we can derive the asymptotic behavior of $F(\tau)$ as $\tau\rightarrow +\infty$ for $1<p<3.$
\begin{prop} \label{123} Let $1<p<3.$ Then it holds
\beq
\lim_{\tau\to \infty}E(\tau)=\lim_{\tau\to \infty}F(\tau) =0.
\eeq
\end{prop}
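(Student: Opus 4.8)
The plan is to reduce the whole statement to the single limit $\lim_{\tau\to\infty}H(\tau)=0$. By \eqref{109}, $E(\tau)$ is a fixed nonzero multiple of $F(\tau)$, so $E(\tau)\to0$ if and only if $F(\tau)\to0$; and by \eqref{1010} together with Lemma \ref{31} (which gives $\beta L(\tau)\to\pi/2$ as $\tau\to\infty$, since $\min\{p+1,2\}=2$ for $1<p<3$), we have $F(\tau)=H(\tau)/(\beta L(\tau))\to\frac{2}{\pi}\lim_{\tau\to\infty}H(\tau)$. Thus it suffices to prove $H(\tau)\to0$.

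I would work from the representation \eqref{41}, $H(\tau)=A(\tau)\int_1^\tau y^{1-p}I(y,\tau)^{-1/2}\,dy$. First I record the elementary asymptotics of the prefactor: since $1-\tau^{1-p}\to1$ and $\tau^2-1\sim\tau^2$, one has $A(\tau)\sim\big(\tfrac{p-1}{2}\big)^{\frac{p-1}{p+1}}\tau^{\frac{2(p-1)}{p+1}}\to+\infty$. Hence the real task is to show that $\mathcal I(\tau):=\int_1^\tau y^{1-p}I(y,\tau)^{-1/2}\,dy$ decays strictly faster than $\tau^{-2(p-1)/(p+1)}$. The structural facts I would exploit are that $z(\tau)\sim\tau^2$, that $I(\cdot,\tau)$ is concave on $[1,\tau]$ with simple zeros $I(1,\tau)=I(\tau,\tau)=0$ (so $I(y,\tau)^{-1/2}$ has only integrable singularities at the endpoints), and the exact factorization available at $p=2$, where a direct computation gives $I(y,\tau)=\frac{(y-1)(\tau-y)(\tau+y+1)}{y}$.

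The elliptic integrals enter through this factorization. At $p=2$ the model computation reduces $\mathcal I(\tau)$ to $\int_1^\tau\big(y(y-1)(\tau-y)\big)^{-1/2}\,dy=\frac{2}{\sqrt\tau}K(1-\tfrac1\tau)$ by Lemma \ref{44} (up to the bounded factor $(\tau+y+1)^{-1/2}\asymp\tau^{-1/2}$), and Lemma \ref{45} then yields $K(1-\tfrac1\tau)\sim\ln(4\sqrt\tau)$, so $\mathcal I(\tau)\sim C\,\tau^{-1}\ln\tau$. For general $1<p<3$ I would sandwich $1-y^{1-p}$ between constant multiples of the factorable model weight (using that $y^{1-p}$ is monotone in $p$, which yields clean one-sided comparisons against the $p=2$ structure) to bound $I(y,\tau)$ below and above by factorable expressions; the comparison integrals are again complete elliptic integrals governed by Lemmas \ref{43}--\ref{45}, giving $\mathcal I(\tau)\le C\,\tau^{-\gamma}(\ln\tau)^{\delta}$ with $\gamma=p-1$ for $1<p\le2$ and $\gamma=1$ for $2\le p<3$. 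Multiplying by $A(\tau)$, the exponent of $\tau$ in $H(\tau)$ becomes $-\frac{(p-1)^2}{p+1}$ for $1<p\le2$ and $\frac{p-3}{p+1}$ for $2\le p<3$, both strictly negative; hence $H(\tau)\to0$ and the proposition follows.

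The main obstacle is the nonpolynomial term $z(\tau)y^{1-p}$ in $I(y,\tau)$, which blocks any closed-form evaluation of $\mathcal I(\tau)$ away from $p=2$. The crux is therefore to produce factorable upper and lower bounds for $I$ carrying the correct power of $y$, uniformly for $y\in[1,\tau]$ and $\tau$ large, so that Lemmas \ref{44}--\ref{45} apply and still leave $A(\tau)\mathcal I(\tau)$ with a negative power of $\tau$. This is most delicate as $p\to3^-$, where the surviving exponent $\frac{p-3}{p+1}$ tends to $0$ and the logarithmic factor produced by Lemma \ref{45} must be absorbed into the (vanishing) power of $\tau$.
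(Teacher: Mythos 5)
Your overall skeleton is the same as the paper's: reduce to $\lim_{\tau\to\infty}H(\tau)=0$ via \eqref{109}, \eqref{1010} and Lemma \ref{31}, note $A(\tau)\sim C\tau^{2(p-1)/(p+1)}$, control the singular integral by the complete elliptic integral of Lemma \ref{44} and the asymptotics of Lemma \ref{45}, and land on exactly the exponents $-\tfrac{(p-1)^2}{p+1}$ (for $1<p\le 2$) and $\tfrac{p-3}{p+1}$ (for $2\le p<3$) that the paper obtains. But the step you yourself call the crux --- producing a \emph{positive, factorable lower bound for $I(y,\tau)$ on all of $(1,\tau)$} --- is where your proposal breaks. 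The sandwich $c_1\tfrac{y-1}{y}\le 1-y^{1-p}\le c_2\tfrac{y-1}{y}$ is true (with $c_1=\min(p-1,1)$), but substituting the lower bound into $I(y,\tau)=1-y^2+z(\tau)\bigl(1-y^{1-p}\bigr)$ gives
\beq
I(y,\tau)\;\ge\;\frac{(y-1)\bigl[c_1 z(\tau)-y(y+1)\bigr]}{y},
\eeq
whose right-hand zero sits at $y_+\approx\sqrt{c_1}\,\tau$, \emph{strictly inside} $(1,\tau)$: for $1<p<2$ one has $c_1=p-1<1$, so the bound becomes negative on a region of length comparable to $\tau$; even for $2\le p<3$, where $c_1=1$, one checks $z(\tau)\le\tau(\tau+1)$ exactly when $p\ge 2$, so again $y_+<\tau$. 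Thus your comparison bound is useless precisely near the endpoint $y=\tau$, where $I^{-1/2}$ has the singularity that must be controlled, and Lemma \ref{44} cannot be applied to the full integral $\int_1^\tau$ --- only to $\int_1^{y_+}$, leaving an uncontrolled leftover piece.

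The paper avoids this by never perturbing the location of the zeros: it uses the exact factorization $I(y,\tau)=\bigl(z(\tau)-z(y)\bigr)\tfrac{y^{p-1}-1}{y^{p-1}}$ (with $z(y)=\tfrac{y^{p-1}(y^2-1)}{y^{p-1}-1}$) and then the convexity of $z$, which gives $\tfrac{z(\tau)-z(y)}{\tau-y}\ge\tfrac{z(\tau)-z(1)}{\tau-1}$; the resulting lower bound $\tfrac{z(\tau)-z(1)}{\tau-1}\,\tfrac{y^{p-1}-1}{y^{p-1}(y-1)}\,(y-1)(\tau-y)$ vanishes exactly at $y=1$ and $y=\tau$ and is positive in between, so Lemma \ref{44} applies directly after the elementary bound $\bigl(\tfrac{y^{p-1}-1}{y^{p-1}(y-1)}\bigr)^{-1/2}\le C\sqrt{y}$. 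Your gap is patchable --- e.g.\ on $(y_+,\tau)$ one can use concavity of $I(\cdot,\tau)$ and the slope $|I_y(\tau,\tau)|\sim 2\tau$ to bound the leftover integral, and the extra contribution still dies under multiplication by $A(\tau)$ --- but this is an additional argument you neither state nor sketch, and without it the claimed estimate $\mathcal I(\tau)\le C\tau^{-\gamma}(\ln\tau)^{\delta}$ does not follow from what you wrote. (A minor point in your favor: the $p\to3^-$ ``delicacy'' you worry about is harmless, since $p$ is fixed in the proposition and $\tau^{(p-3)/(p+1)}\ln\tau\to0$ for each fixed $p<3$.)
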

\begin{proof}
Note \eqref{109}, $F(\tau)=H(\tau)/\beta L(\tau)$ and $\lim_{\tau\rightarrow +\infty}L(\tau)=\pi/2$ by Lemma \ref{31}. It is reduced to prove
\beq\label{46}
\lim_{\tau\to \infty}H(\tau) =0\quad \text{for}\quad p\in(1,3).
\eeq
By calculation, $I(y,\tau)$ in \eqref{41} can be rewritten as
\beq
I(y,\tau)=\frac{z(\tau)-z(y)}{\tau-y}\frac{y^{p-1}-1}{y^{p-1}(y-1)}(y-1)(\tau-y)
\eeq
and  we have
\beq
I(y,\tau)\geq\frac{z(\tau)-z(1)}{\tau-1}\frac{y^{p-1}-1}{y^{p-1}(y-1)}(y-1)(\tau-y)\quad \text{for}\ y\in(1,\tau),
\eeq
by using the convexity of $z(y)$ with respect to $y.$ Then from \eqref{41},  Lemmas \ref{44}, and \ref{45}, we have
\beq
\begin{split}
H(\tau)&\leq A(\tau)\left(\frac{z(\tau)-z(1)}{\tau-1}\right)^{-1/2}\int_1^{\tau}\left(y^{p-1}\sqrt{\frac{y^{p-1}-1}{y^{p-1}(y-1)}}\sqrt{y-1}\sqrt{\tau-y}\right)^{-1}dy\\
&\leq A(\tau)\left(\frac{z(\tau)-z(1)}{\tau-1}\right)^{-1/2}\int_1^{\tau}\left(y^{p-\frac{3}{2}}\sqrt{\frac{y^{p-1}-1}{y^{p-1}(y-1)}}\sqrt{y}\sqrt{y-1}\sqrt{\tau-y}\right)^{-1}dy\\
&\leq C(p) \tau^{\frac{2(p-1)}{p+1}-{1\over2}}\int_1^{\tau}y^{2-p}\left(\sqrt{y}\sqrt{y-1}\sqrt{\tau-y}\right)^{-1}dy\\
&\leq \left\{\arraycolsep=1.5pt
\begin {array}{lll}
C(p) \tau^{\frac{2(p-1)}{p+1}-{1\over2}}\tau^{2-p}\int_1^{\tau}\left(\sqrt{y}\sqrt{y-1}\sqrt{\tau-y}\right)^{-1}dy, \ &\text{for}\ p\in(1,2)\\
C(p) \tau^{\frac{2(p-1)}{p+1}-{1\over2}}\int_1^{\tau}\left(\sqrt{y}\sqrt{y-1}\sqrt{\tau-y}\right)^{-1}dy,  \ &\text{for}\ p\in [2,+\infty)\\
\end{array}\right.\\
&=\left\{\arraycolsep=1.5pt
\begin {array}{lll}
C(p) \tau^{\frac{2(p-1)}{p+1}-{1\over2}}\tau^{2-p}\frac{2}{\sqrt{\tau}}\frac{{\pi/2}}{AGM({1\over{\sqrt{\tau}}},1)}, \ &\text{for}\ p\in(1,2)\\
C(p) \tau^{\frac{2(p-1)}{p+1}-{1\over2}}\frac{2}{\sqrt{\tau}}\frac{{\pi/2}}{AGM({1\over{\sqrt{\tau}}},1)},  \ &\text{for}\ p\in [2,+\infty)\\
\end{array}\right.\\
&\leq \left\{\arraycolsep=1.5pt
\begin {array}{lll}
C(p) \tau^{\frac{-(p-1)^2}{p+1}}\ln(4\sqrt{\tau}), \ &\text{for}\ p\in(1,2)\\
C(p) \tau^{-\frac{3-p}{p+1}}\ln(4\sqrt{\tau}),  \ &\text{for}\ p\in [2,+\infty),\\
\end{array}\right.\\
\end{split}
\eeq
which implies \eqref{46}. The proof is complete.
\qed
\end{proof}
\begin{rem}
  Finally, we summarize  the results obtained in  this subsection \ref{126} as follows:
  \beq
  \lim_{\tau\to \infty}F(\tau) =\left\{\arraycolsep=1.5pt
  \begin {array}{lll}
C(p)<+\infty, \ &\ 0<p<1,\\
0,  \ &\ 1<p<3,\\
+\infty, \ &\ p>3.\\
\end{array}\right.\\
  \eeq
\end{rem}

\subsection{The asymptotic behavior of $E(\tau)$ for $p=1$}\label{127}
In this subsection, we will focus on the asymptotic behavior of $F_1(\tau).$
\begin{prop}\label{prop1025} Let $p=1$. Then, it holds $\lim_{\tau\to 1}F_1(\tau)=0$, which implies by \eqref{109} that $\lim_{\tau\to 1}E(\tau)=-\pi\lambda +2\pi\lambda\ln \frac{2\sqrt{\lam}}{\alpha+2}$.
\end{prop}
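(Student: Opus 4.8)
The plan is to exploit the same structure as in the proof of Proposition~\ref{prop3.2}: by \eqref{113}, $F_1(\tau)$ is a weighted average of a certain $\xi$-dependent quantity against a nonnegative weight, and I will show that the averaged quantity tends to $0$ uniformly in $\xi$. Concretely, set
\[
f_2(\xi,\tau):=\frac{\tau-1}{\sqrt{1-(\xi(\tau-1)+1)^2+(\tau^2-1)\frac{\ln(\xi(\tau-1)+1)}{\ln\tau}}},
\]
which for $\tau>1$ is the ($p=1$) analogue of the weight $f_2$ used before, and
\[
g_1(\xi,\tau):=\ln\!\left[\left(\frac{2\ln\tau}{\tau^2-1}\right)^{\frac12}(\xi(\tau-1)+1)\right]=\frac12\ln\frac{2\ln\tau}{\tau^2-1}+\ln\!\big(\xi(\tau-1)+1\big),
\]
so that \eqref{113} reads $F_1(\tau)=\big(\int_0^1 g_1 f_2\,d\xi\big)/\big(\int_0^1 f_2\,d\xi\big)$. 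By \eqref{14} and \eqref{112} the denominator equals $\beta L(\tau)$, which by Lemma~\ref{31} tends to $\pi/(\sqrt2\,\beta)$ as $\tau\to1$; in particular it is finite and strictly positive, so $F_1(\tau)$ is a genuine weighted average. Since $\tau>1$ forces $f_2\ge0$, I immediately obtain the key bound $|F_1(\tau)|\le\sup_{\xi\in[0,1]}|g_1(\xi,\tau)|$.

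The crux is therefore to prove that $g_1(\xi,\tau)\to0$ uniformly in $\xi\in[0,1]$ as $\tau\to1^+$, and this splits into the two summands. For the first, a Taylor expansion (equivalently l'H\^opital) gives $\frac{2\ln\tau}{\tau^2-1}\to1$: writing $\tau=1+s$ one has $2\ln\tau=2s-s^2+O(s^3)$ and $\tau^2-1=2s+s^2$, whence the ratio is $1+O(s)$, so $\frac12\ln\frac{2\ln\tau}{\tau^2-1}\to0$; this term is independent of $\xi$. For the second, $|\xi(\tau-1)|\le\tau-1\to0$ uniformly on $[0,1]$, hence $\ln(\xi(\tau-1)+1)\to0$ uniformly. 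Combining the two, $\sup_{\xi\in[0,1]}|g_1(\xi,\tau)|\to0$, and the displayed bound yields $\lim_{\tau\to1}F_1(\tau)=0$.

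Finally, the statement about $E$ follows by inserting $F_1(\tau)\to0$ into the $p=1$ line of \eqref{109}, namely $E(\tau)=-\pi\lambda+2\pi\lambda\ln\sqrt{\lambda/\beta^2}+2\pi\lambda F_1(\tau)$, and using $\beta=(\alpha+2)/2$ for $p=1$ so that $\sqrt{\lambda/\beta^2}=2\sqrt\lambda/(\alpha+2)$; one then gets $\lim_{\tau\to1}E(\tau)=-\pi\lambda+2\pi\lambda\ln\frac{2\sqrt\lambda}{\alpha+2}$. I do not expect any real obstacle here: unlike the $\tau\to\infty$ analysis of Section~\ref{126}, no elliptic integrals arise, and the only point requiring care is the uniformity of $g_1\to0$, which the weighted-average inequality reduces to the two elementary limits above. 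The sole thing to double-check is that $\int_0^1 f_2\,d\xi$ stays finite and bounded away from $0$ near $\tau=1$, which is exactly what Lemma~\ref{31} supplies.
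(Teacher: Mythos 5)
Your proof is correct, but it takes a genuinely different route from the paper's. The paper simply notes the pointwise limit $\lim_{\tau\to1}\ln\bigl[(\tfrac{2\ln\tau}{\tau^2-1})^{1/2}(\xi(\tau-1)+1)\bigr]=0$ for each fixed $\xi\in(0,1)$ and then says the proposition ``can be proven in the same manner as Proposition \ref{prop3.2}'', i.e.\ by establishing uniform convergence of the improper integrals (via the Abel test) so that the limit may be interchanged with both the numerator and denominator integrals in \eqref{113}. You instead exploit the weighted-average structure directly: since $f_2\ge0$, you get $|F_1(\tau)|\le\sup_{\xi\in[0,1]}|g_1(\xi,\tau)|$, and then prove the \emph{uniform} (in $\xi$) convergence $g_1\to0$ by splitting into the $\xi$-independent term $\tfrac12\ln\tfrac{2\ln\tau}{\tau^2-1}\to0$ and the term $\ln(\xi(\tau-1)+1)$, bounded by $\ln\tau\to0$. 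This is more elementary and arguably more robust: it avoids the Abel test and all questions about passing limits through improper integrals, needing only that $\int_0^1 f_2\,d\xi=\beta L(\tau)$ is finite and positive for each $\tau$ (and, in fact, you do not even need the denominator bounded away from $0$, since it cancels in the averaging inequality — your closing worry on that point is superfluous). The trade-off is that your argument works precisely because the limit of $g_1$ is a constant ($0$) uniformly in $\xi$, whereas the paper's interchange-of-limits machinery is the one it already set up for Proposition \ref{prop3.2} and reuses wholesale. One small slip to fix: by Lemma \ref{31}, $L(\tau)\to\pi/(\sqrt2\,\beta)$ as $\tau\to1$, so the denominator $\beta L(\tau)$ tends to $\pi/\sqrt2$, not to $\pi/(\sqrt2\,\beta)$; this is harmless since only finiteness and positivity enter your argument. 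The final computation of $\lim_{\tau\to1}E(\tau)$ from \eqref{109} with $\beta=(\alpha+2)/2$ matches the paper.
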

\begin{proof}
Note that
\beq
\lim_{\tau\to 1}\ln \left[\left(\frac{2 \ln\tau}{\tau ^2-1}\right)^{\frac{1}{2}}(\xi(\tau-1)+1)\right]=0,\ \forall \xi\in(0,1).
\eeq
This proposition can be proven in the same manner as   Proposition \ref{prop3.2}.
\qed
\end{proof}

\begin{prop}\label{prop0727}Let $p=1$.  Then, it  holds $\lim_{\tau\to +\infty}F_1(\tau)=+\infty$, which implies by \eqref{109} that $\lim_{\tau\to 1}E(\tau)=+\infty$. More precisely, $F_1(\tau)=\ln \sqrt{\ln \tau}+o(\ln \tau)$ as $\tau\to +\infty.$
\end{prop}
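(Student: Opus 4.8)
The plan is to realize $F_1(\tau)$ as the $d\theta$-average of $\ln w$ and to show it is governed by the value of $w$ near its maximum $w_2$. Starting from the quotient form of $F_1$ in \eqref{113} (equivalently $F_1(\tau)=\frac{1}{L(\tau)}\int_{w_1}^{w_2}\frac{\ln w}{\sqrt{E-g(w)}}\,dw$), I would substitute $y=w/w_1$ and use \eqref{9} for $p=1$, i.e. $w_1^2=\frac{2\lambda\ln\tau}{\beta^2(\tau^2-1)}$, to factor $E-g(w)=\beta^2w_1^2\,D(y,\tau)$ with
\[
D(y,\tau)=1-y^2+(\tau^2-1)\frac{\ln y}{\ln\tau},\qquad D(1,\tau)=D(\tau,\tau)=0.
\]
Writing $\ln w=\ln w_2+\ln(w/w_2)$ and $w/w_2=y/\tau$, the common factor $\beta w_1$ cancels in the quotient, giving
\[
F_1(\tau)=\ln w_2+\frac{R(\tau)}{\beta L(\tau)},\qquad R(\tau):=\int_1^\tau\frac{\ln(y/\tau)}{\sqrt{D(y,\tau)}}\,dy,
\]
where the denominator $\beta L(\tau)=\int_1^\tau D(y,\tau)^{-1/2}\,dy\to\pi/2$ by Lemma \ref{31} with $p=1$.

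From $w_2^2=\tau^2w_1^2=\frac{2\lambda\ln\tau}{\beta^2}\cdot\frac{\tau^2}{\tau^2-1}$ one gets $\ln w_2=\tfrac12\ln\ln\tau+\tfrac12\ln\frac{2\lambda}{\beta^2}+o(1)$, which already supplies the claimed leading term $\ln\sqrt{\ln\tau}$. It then suffices to show that $R(\tau)$ stays bounded. Rescaling $y=\tau\eta$ turns this into
\[
R(\tau)=\int_{1/\tau}^1\frac{\ln\eta}{\sqrt{\Phi(\eta,\tau)}}\,d\eta,\qquad \Phi(\eta,\tau)=\frac{D(\tau\eta,\tau)}{\tau^2}=(1-\eta^2)+\frac{\tau^2-1}{\tau^2}\frac{\ln\eta}{\ln\tau},
\]
and for each fixed $\eta\in(0,1)$ one has $\Phi(\eta,\tau)\to1-\eta^2$, so the integrand tends to $\frac{\ln\eta}{\sqrt{1-\eta^2}}$, which is integrable on $(0,1)$.

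The obstruction to passing directly to the limit is that the lower limit $1/\tau$ moves and $\ln\eta$ is unbounded near it, so I would split $R(\tau)$ into three parts. On a fixed bulk interval $[a,1-b]$ the convergence $\Phi\to1-\eta^2$ is uniform and $1-\eta^2$ is bounded below, so that part converges to $\int_a^{1-b}\frac{\ln\eta}{\sqrt{1-\eta^2}}\,d\eta$. Near $\eta=1$, $\ln\eta=O(1-\eta)$ while a Taylor estimate gives $\Phi(\eta,\tau)\ge1-\eta$ for $\tau$ large, so that part is $O(\sqrt b)$ uniformly in $\tau$. The delicate part is $\int_{1/\tau}^a$: substituting $\eta=\tau^\zeta$ (so $\zeta=\frac{\ln\eta}{\ln\tau}\in(-1,\zeta_a)$, $\zeta_a=\frac{\ln a}{\ln\tau}\to0^-$) gives $d\eta=\tau^\zeta\ln\tau\,d\zeta$ and $\Phi\approx1+\zeta$ with $\Phi\to0$ as $\zeta\to-1$, so this part is comparable to $(\ln\tau)^2\int_{-1}^{\zeta_a}\frac{|\zeta|\,\tau^\zeta}{\sqrt{1+\zeta}}\,d\zeta$. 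The exponential weight $\tau^\zeta=e^{\zeta\ln\tau}$ concentrates the mass in a window of width $\sim1/\ln\tau$ about $\zeta=\zeta_a$, where $|\zeta|\approx|\ln a|/\ln\tau$ and $\tau^{\zeta_a}=a$; hence this part is $O(a|\ln a|)$ uniformly in $\tau$ and small with $a$. Combining the three parts shows $R(\tau)=O(1)$, indeed $R(\tau)\to\int_0^1\frac{\ln\eta}{\sqrt{1-\eta^2}}\,d\eta$.

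Assembling the pieces gives $F_1(\tau)=\ln w_2+R(\tau)/(\beta L(\tau))=\tfrac12\ln\ln\tau+O(1)=\ln\sqrt{\ln\tau}+o(\ln\tau)$, so $F_1(\tau)\to+\infty$, and by \eqref{109} this yields $E(\tau)\to+\infty$ as $\tau\to+\infty$. The step I expect to require the most care is the uniform control of the left-endpoint part $\int_{1/\tau}^a$: there the unboundedness of the weight $\ln\eta$ (which reaches $-\ln\tau$) must be balanced against the exponential smallness of $\tau^\zeta$, and it is precisely this balance that keeps $R(\tau)$ bounded and prevents a spurious $\ln\tau$ from surviving in the final average.
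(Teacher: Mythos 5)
Your proof is correct, and it takes a genuinely different route from the paper's. The paper splits off the \emph{minimum}: writing $\ln w=\ln w_1+\ln y$ in \eqref{113}, it decomposes $F_1(\tau)=I_1(\tau)+I_2(\tau)$, where $I_1(\tau)=\frac{1}{2}\ln\frac{2\ln\tau}{\tau^2-1}$ is essentially $\ln w_1$ and $I_2(\tau)$ is the weighted average of $\ln y$ over $[1,\tau]$; it then argues, via Abel's lemma and the pointwise limit $\ln\bigl(\xi(\tau-1)+1\bigr)/\ln\tau\to 1$, that $I_2(\tau)=\ln\tau+o(\ln\tau)$, and obtains the expansion from the cancellation of the $-\ln\tau$ hidden in $I_1$ against the $+\ln\tau$ in $I_2$. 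You instead split off the \emph{maximum}, $\ln w=\ln w_2+\ln(w/w_2)$, so that by \eqref{9} the divergent term $\frac{1}{2}\ln\ln\tau$ appears explicitly in $\ln w_2$, and everything else reduces to showing $R(\tau)$ stays bounded, indeed $R(\tau)\to\int_0^1\ln\eta\,(1-\eta^2)^{-1/2}\,d\eta=-\frac{\pi}{2}\ln 2$; your three-region analysis (fixed bulk, $\eta$ near $1$, moving endpoint $\eta$ near $1/\tau$) is sound --- in particular the delicate endpoint estimate works because $\Phi$, as a concave function of $\zeta=\ln\eta/\ln\tau$ vanishing at $\zeta=-1$ and $\zeta=0$, satisfies $\Phi\gtrsim 1+\zeta$ on $[-1,\zeta_a]$, after which the weight $\tau^{\zeta}$ localizes the integral in a window of width $O(1/\ln\tau)$ and yields the $O(a|\ln a|)$ bound you claim. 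What your route buys is worth recording: it proves $F_1(\tau)=\frac{1}{2}\ln\ln\tau+O(1)$, which genuinely implies \emph{both} $F_1\to+\infty$ and the stated expansion. The paper's decomposition, taken literally, controls the error only by $o(\ln\tau)$; since the surviving main term $\ln\ln\tau$ is itself $o(\ln\tau)$, that bound by itself does not exclude the error term dragging $F_1$ to $-\infty$, so concluding $F_1\to+\infty$ along the paper's lines requires upgrading the estimate on $I_2-\ln\tau$ (to $O(1)$, say) --- exactly what your maximum-based splitting delivers automatically. (One cosmetic point: your $\ln w_2$ carries the constant $\frac{1}{2}\ln\frac{2\lambda}{\beta^2}$ because you use the definition \eqref{108} of $F_1$, whereas \eqref{113} normalizes $w$ by $\sqrt{\lambda/\beta^2}$; this constant shift is immaterial to both conclusions.)
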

\begin{proof}
Rewrite $F_1$ in \eqref{113} as $$F_1(\tau)=I_1(\tau)+I_2(\tau),$$
where $I_1(\tau)=\frac{1}{2}\ln\frac{2\ln\tau}{\tau^2-1}$ and
\beq
I_2(\tau)= \frac{\displaystyle{\int_0^1 ~\ln \left[(\xi(\tau-1)+1)\right] \frac{\tau-1}{\sqrt{1-(\xi(\tau-1)+1)^2+(\tau^2-1)\frac{\ln{(\xi(\tau-1)+1)}}{\ln \tau}}}~d\xi}}
{\displaystyle{\int_0^1~\frac{\tau-1}{\sqrt{1-(\xi(\tau-1)+1)^2+(\tau^2-1)\frac{\ln{(\xi(\tau-1)+1)}}{\ln \tau}}}~d\xi}}.
\eeq
 Note that $I_2(\tau)=\ln \tau +o(\ln \tau)$ as $\tau\to \infty$ by Abel's Lemma and $$\lim_{\tau\to \infty}\frac{\ln \left[(\xi(\tau-1)+1)\right] }{\ln \tau}=1$$ for any given $\xi\in(0,1)$. Therefore, we can conclude that $F_1(\tau)=\ln \sqrt{\ln \tau}+o(\ln \tau),$ and this proposition follows.

\qed
\end{proof}

\subsection{The monotonicity of $E(\tau)$}\label{134}
In this subsection, we examine the  monotonicity of $E(\tau)$ with respect to $\tau.$ It was previously established in Subsection \ref{115} that $E(\tau)$ is a constant independent of $\tau$ for the particular case $p=3.$  However, for $p\in(0,+\infty)\setminus\{3\}$,  the monotonicity remains uncertain.

  As an initial step towards studying the monotonicity of  $E(\tau),$ we reduce it to examine the monotonicity of $F(\tau)$ and $F_1(\tau),$ as per \eqref{109}. For   easy reference, Table \ref{130} provides a summary of the asymptotic behaviors of  $F(\tau)$ and $F_1(\tau)$  obtained in Subsections \ref{115}-\ref{127}.
From Table  \ref{130}, we can infer that   if  $F(\tau)$ and $F_1(\tau)$  are monotonic with respect to $\tau$, then they can only be expressed as follows:\\
  \beq \label{131}
  \begin{split}
 & (1)\  F(\tau)\ \text{decreases monotonically  with respect to}\ \tau\ \text{for}\  p\in(1,3);\\
 & (2)\  F(\tau)\ \text{increases monotonically  with respect to}\ \tau\ \text{for}\ p\in(0,1)\cup(3,+\infty);\\
 & (3)\   F_1(\tau)\ \text{increases monotonically  with respect to}\ \tau\ \text{for}\ p=1.
  \end{split}
    \eeq
However, providing a mathematical proof for \eqref{131} is challenging. Therefore, we present the numerical results (Figure \ref{level}).

\begin{table}[!h]
\centering
\begin{tabular}{c|c|c} 
\hline
\centering
$F(\tau)$  &  $\tau\to 1$ &  $\tau\to \infty$  \\ 
\hline 
$p<1$ & $1$ & $C(p)(>1)$ \\
\hline
$1<p<3$ & $1$ & $0$  \\
\hline
$p=3$ & $1$ & $1$ \\
\hline
$p>3$ & $1$ & $+\infty$ \\
\toprule

\hline
$F_1(\tau)$  &  $\tau\to 1$ &  $\tau\to \infty$  \\ 
\hline 
$p=1$ & $0$ & $+\infty$ \\
\toprule

\hline
$E(\tau)$  &  $\tau\to 1$ &  $\tau\to \infty$  \\ 
\hline 
$p<1$ & $\pi\lambda\frac{1+p}{1-p}  \big[\frac{(p+1)^2}{(\alpha+2)^2}\lambda\big]^{\frac{1-p}{p+1}}$ & $\pi\lambda\frac{1+p}{1-p}  \big(\frac{\lambda}{\beta^2}\big)^{\frac{1-p}{p+1}} C(p)$ \\
\hline 
$p=1$ & $-\pi\lambda +2\pi\lambda\ln \frac{2\sqrt{\lam}}{\alpha+2}$ & $+\infty$ \\
\hline
$1<p<3$ &$\pi\lambda\frac{1+p}{1-p}  \big[\frac{(p+1)^2}{(\alpha+2)^2}\lambda\big]^{\frac{1-p}{p+1}}$ & $0$  \\
\hline
$p=3$ & $-\pi\sqrt{\lambda}(\alpha+2)/2$ & $-\pi\sqrt{\lambda}(\alpha+2)/2$ \\
\hline

$p>3$ & $\pi\lambda\frac{1+p}{1-p}  \big[\frac{(p+1)^2}{(\alpha+2)^2}\lambda\big]^{\frac{1-p}{p+1}}$ & $+\infty$ \\
\toprule
\end{tabular}
\caption{Asymptotic behaviors of $F(\tau),$ $F_1(\tau)$ and  $E(\tau)$} \label{130} 
\end{table}

\begin{figure}[!h]
	\centering  
	\vspace{-0.35cm} 
	\subfigtopskip=2pt 
	\subfigbottomskip=2pt 
	\subfigcapskip=-5pt 
	\subfigure[$p\in(0,1)\cup(1,3.5)$]{
		\label{level.sub3.1.1}
		\includegraphics[width=0.4\linewidth]{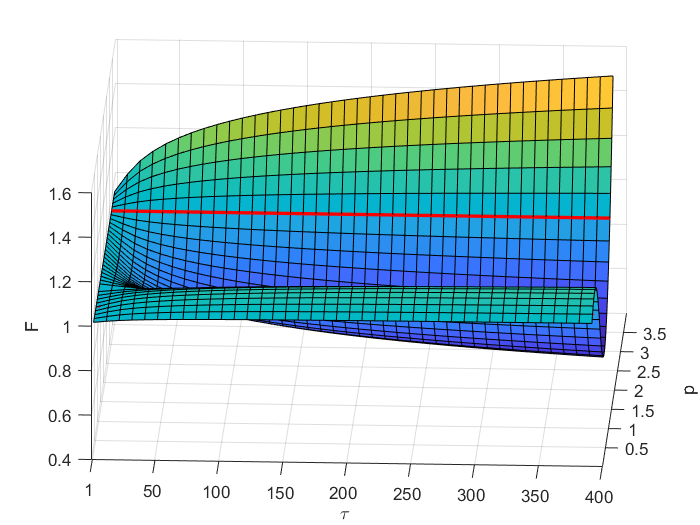}}
	\quad 
    \subfigure[$p\in(3,50)$]{
		\label{level.sub3.1.2}
		\includegraphics[width=0.4\linewidth]{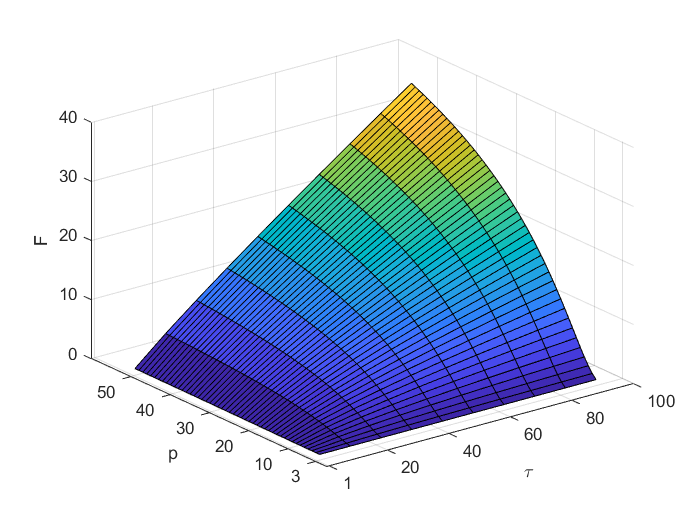}}\\

	\subfigure[$p=0.5,2,3,4$]{
		\label{level.sub3.1.3}
		\includegraphics[width=0.31\linewidth]{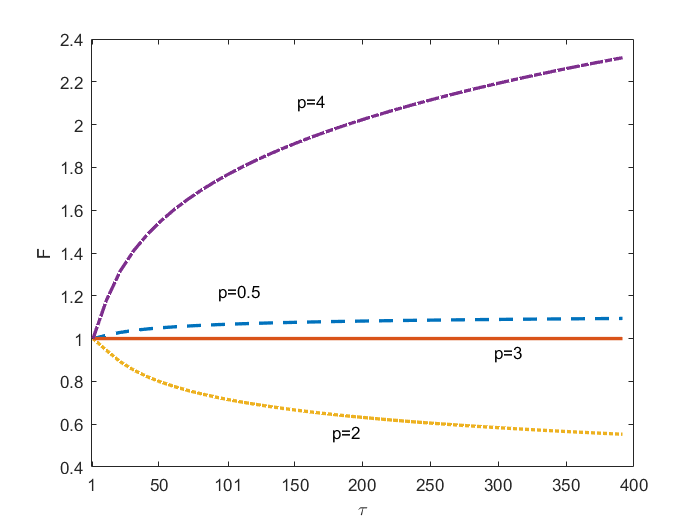}}
	\subfigure[$p=20$]{
		\label{level.sub3.1.4}
		\includegraphics[width=0.31\linewidth]{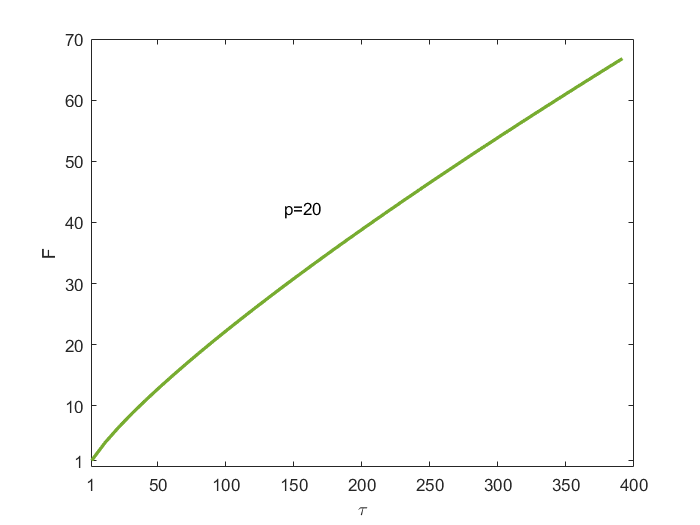}}
 \subfigure[$p=1$]{
		\label{level.sub3.2..1}
		\includegraphics[width=0.31\linewidth]{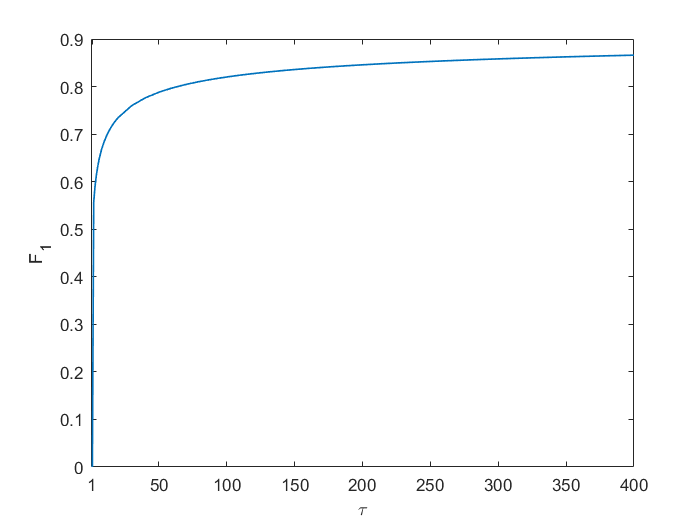}}
%

	\caption{$F$ and $F_1$}
	\label{level}
\end{figure}

\subsection{The proof of Theorems \ref{121} and \ref{thm1.3}}\label{128}
\noindent\textbf{Proof of Theorem \ref{121}.} This theorem can be  obtained directly from Theorem \ref{thm1.1}, Corollary \ref{83} and \ref{0805}.

\noindent\textbf{Proof of Theorem \ref{thm1.3}.}  Combining \eqref{131} with Lemma \ref{31} and \eqref{109}, and recalling that $w\in\mathfrak{S}_0\Leftrightarrow \tau=1,$ leads to the presentation of Table \ref{132}.
\begin{table}[!h]

\begin{tabular}{p{2cm}<{\centering}|p{1.5cm}<{\centering}|p{1.5cm}<{\centering}|p{1.5cm}<{\centering}|p{1.5cm}<{\centering}|p{1.5cm}<{\centering}|p{2cm}<{\centering}} 

\hline
\centering
\  & $j=\frac{\pi}{L(\tau)}$ & $L(\tau)$& $\tau$   & $F(\tau)$ & $E(\tau)$ & $E(\mathfrak{S}_0)$ \\ 
\hline 
$p<1$ & $\nearrow$ & $\searrow$& $\nearrow$   & $\nearrow$ & $\nearrow$ & minimum \\
\hline
$p=1$ & $\nearrow$ & $\searrow$& $\nearrow$   & $\nearrow$ & $\nearrow$ & minimum \\
\hline
$1<p<3$ & $\nearrow$ & $\searrow$& $\nearrow$   & $\searrow$ & $\nearrow$ & minimum \\
\hline
$p>3$ & $\nearrow$ & $\searrow$ & $\searrow$   & $\searrow$ & $\nearrow$ & maximum \\
\hline
\end{tabular}
\centering
\caption{Monotonicity of $E(w)$ with respect to the frequency $j$ of $w$} \label{132}
\end{table}

By Table \ref{132}, we can conclude the subsequent energy inequalities:
\begin{equation}\label{133}
\begin {array}{lll}
  E(\mathfrak{S}_0)<E(\mathfrak{S}_1)<E(\mathfrak{S}_2)<\cdots <E(\mathfrak{S}_j)<\cdots <E(\mathfrak{S}_{N_0}),\quad \text{for}\ 0<p<3,\\
  E(\mathfrak{S}_1)<E(\mathfrak{S}_2)<\cdots <E(\mathfrak{S}_j)<\cdots <E(\mathfrak{S}_{N_0})<E(\mathfrak{S}_0),\quad \text{for}\ p>3,
  \end{array}
  \end{equation}
where $\mathfrak{S}_k,k=1,2,\ldots,N_0$ are as stated in Theorem \ref{thm1.1}. Recall that  $E(w_0)\leq E(w_{\infty})$ from Proposition \ref{3.prop1}.  Theorem \ref{thm1.3} follows and the proof is complete.


\end{document}